\newtheorem{theorem}{Theorem}[section]
\newtheorem{lm}[theorem]{Lemma}
\newtheorem{tr}[theorem]{Theorem}
\newtheorem{cor}[theorem]{Corollary}
\newtheorem{rem}[theorem]{Remark}
\newtheorem{pr}[theorem]{Proposition}
\newtheorem{ex}[theorem]{Example}
\begin{document}
	
	\title{Almost-simple algebraic supergroups}
	\author{S. Bouarroudj}
	\address{New York University Abu Dhabi, Division of Science and Mathematics, Po Box. 129188, Abu Dhabi, United Arab Emirates}
	\email{sofiane.bouarroudj@nyu.edu}
	\author{A. N. Zubkov}
	\address{Sobolev Institute of Mathematics, Omsk Branch, Pevtzova 13, 644043 Omsk, Russian Federation}
	\email{a.zubkov@yahoo.com}
	\begin{abstract}
		We describe certain almost-simple algebraic supergroups over an algebraically closed field of odd or zero characteristic. In addition to supergroups with simple Lie superalgebras from Kac's theorem, we construct new supergroups whose Lie superalgebra is either non-simple or simple but is not part of Kac's list.
	\end{abstract}
	\maketitle
	
	\section*{introduction}
	The purpose of this paper is to initiate the study of almost-simple supergroups over an algebraically closed field $\Bbbk$ of zero or odd characteristic. On the one hand, the authors are motivated by the recent progress in the study of simple modular Lie superalgebras (cf. \cite{BGL3, BGL4, CE, eld, K}), on the other hand, by the problem of describing the structure of quasi-reductive supergroups, in which the case of positive characteristic is very far from even formulating plausible conjectures. It is worth noting that in the case of zero characteristic, such supergroups have more or less satisfactory description (cf. \cite{grzub, sergan2}). Essentially, almost-simple supergroups can be defined in either strong or weak sense, or in brief, SAS-supergroups and WAS-supergroups, respectively. More precisely, a smooth connected supergroup $\mathbb{G}$ is called \emph{almost-simple in the weak sense}, if it is semi-simple, non-commutative and its only smooth connected normal supersubgroups are $\mathbb{G}$ or $e$. If $\mathbb{G}$ satisfies the stronger condition that any proper normal supersubgroup is finite, then $\mathbb{G}$ is called \emph{almost-simple in the strong sense}. Note that if $\mathbb{G}$ is purely even, then these two conditions are equivalent to each other.
	
	The class of SAS-supergroups is contained in the class of WAS-supergroups. Furthermore, any SAS-supergroup is quasi-reductive and we hope that describing the structure of  SAS-supergroups will not turn out to be a wild task as describing the structure of quasi-reductive supergroups. 
	
	If $\mathrm{char}\Bbbk=0$, then these two classes of supergroups  are the same. Moreover, a supergroup $\mathbb{G}$ is almost-simple if and only if its Lie superalgebra $\mathfrak{G}$ is simple (see Proposition \ref{SAS in zero char} below). 
	In other words, the classification of almost-simple supergroups in zero characteristic is analogous to Kac's classification of simple Lie superalgebras. 
	As opposed to this, if 
	$\mathrm{char}\Bbbk=p>0$, then the class of SAS-supergroups is properly contained in the class of WAS-supergroups. For example, if $p\not|(m-n)$, then the \emph{projective special linear supergroup} $\mathrm{PSL}(m|n)$ is a WAS-supergroup, but not SAS. If $p|(m-n)$, then the derived supersubgroup
	$\mathcal{D}(\mathrm{PSL}(m|n))$ is a proper normal infinite supersubgroup of $\mathrm{PSL}(m|n)$ and WAS-supergroup itself, but again, not SAS. Furthermore, we provide many examples of SAS-supergroups with non-simple Lie superalgebras.
	The central result, around which further analysis of the structure of SAS-supergroups is built, is Proposition \ref{SAPS in positive char}. This proposition states that over a field of positive characteristic a supergroup $\mathbb{G}$ is SAS if and only if its largest even (super)subgroup $G=\mathbb{G}_{ev}$ is almost-simple and the odd component $\mathfrak{G}_{\bar 1}$ of its Lie superalgebra $\mathfrak{G}$, regarded as a $G$-module with respect to the adjoint action, has neither nonzero submodule $\mathfrak{V}$ with $[\mathfrak{G}_{\bar 1}, \mathfrak{V}]=0$, nor submodule $\mathfrak{W}\neq\mathfrak{G}_{\bar 1}$, such that $\mathfrak{G}_{\bar 1}/\mathfrak{W}$  is a trivial $G$-module.
	
	It has already been proved by Gavarini and Fioresi that all Lie superalgebras in the Kac's list are algebraic (see \cite{gavfi1, gavfi2}). They proved even more, any such superalgebra $\mathfrak{G}$ has a \emph{Chevalley basis}, that is a basis of a free $\mathbb{Z}$-submodule $\mathfrak{G}_{\mathbb{Z}}$, such that  the rank of $\mathfrak{G}_{\mathbb{Z}}$ is equal to the dimension of $\mathfrak{G}$ and $[\mathfrak{G}_{\mathbb{Z}}, \mathfrak{G}_{\mathbb{Z}}]\subseteq \mathfrak{G}_{\mathbb{Z}}$. Using this $\mathbb{Z}$-form of $\mathfrak{G}$, one can construct the associated \emph{Chevalley supergroup} (over a field $\Bbbk$ of arbitrary characteristic), such that its Lie superalgebra is isomorphic to $\mathfrak{G}_{\mathbb{Z}}\otimes_{\mathbb{Z}} \Bbbk$. We will re-prove this result for all simple Lie superalgebras from Kac's list, except for those that belong to the \emph{Cartan series}, using only the Harish-Chandra pair technique, focusing on whether SAS or WAS are the corresponding supergroups, respectively. We do not address the more subtle issue of uniqueness, since it is not essential to our purposes. There are, however, many cases where it can be determined from the relevant auxiliary statements. 
	For example, we show that the simple Lie superalgebra $\mathfrak{brj}(2; 5)$ (defined only in the characteristic $5$), which does not belong to the Kac's list, is algebraic, by proving that the couple $(\mathrm{Sp}_4, \mathfrak{brj}(2; 5))_{\bar 1})$ has the unique (up to an isomorphism) structure of Harish-Chandra pair. In particular, there is a unique (up to a supergroup isomorphism) SAS-supergroup $\mathbb{G}$, such that $\mathrm{Lie}(\mathbb{G})\simeq\mathfrak{brj}(2; 5)$. 
	
	Returning to Proposition \ref{SAPS in positive char} and dealing with SAS supergroups, note that if $\mathrm{char}\Bbbk=p>0$, then the fact that $\mathbb{G}_{ev}=G$ is almost-simple does not imply that $\mathfrak{G}_{\bar 0}$ is simple. More precisely, there are three mutually exclusive alternatives:
	\begin{enumerate}
		\item $\mathfrak{G}_{\bar 0}$ is simple;
		\item $\mathfrak{G}_{\bar 0}$ has a one dimensional center
		$\mathrm{Z}(\mathfrak{G}_{\bar 0})$, such that $\mathfrak{G}_{\bar 0}/\mathrm{Z}(\mathfrak{G}_{\bar 0})$ is simple;
		\item $[\mathfrak{G}_{\bar 0}, \mathfrak{G}_{\bar 0}]$ is simple and has codimension one in $\mathfrak{G}_{\bar 0}$.
	\end{enumerate}
	In each of these cases, we partially describe the Lie superalgebra $\mathfrak{G}$. For example, if $\mathfrak{G}_{\bar 0}$ is simple,  then any proper superideal of $\mathfrak{G}$ contains the commutator superideal 
	$[\mathfrak{G}, \mathfrak{G}]=\mathfrak{G}_{\bar 0}\oplus [\mathfrak{G}_{\bar 0}, \mathfrak{G}_{\bar 1}]$, that is very close to being simple. Nevertheless, we construct a SAS-supergroup $\mathbb{G}$, such that $G\simeq\mathrm{SL}_2$
	and $[\mathfrak{G}, \mathfrak{G}]\neq\mathfrak{G}$ (see Theorem \ref{certain couples} below).
	
	Further, if $\mathfrak{G}_{\bar 0}$ satisfies the alternative $(2)$, then the structure of the corresponding SAS-supergroup $\mathbb{G}$ is completely described in Proposition \ref{SAS with small square of odd component}, provided $[\mathfrak{G}_{\bar 1}, \mathfrak{G}_{\bar 1}]=\mathrm{Z}(\mathfrak{G}_{\bar 0})$. In the remaining cases (2) and (3), we prove that if $\mathbb{G}$ is not purely even, then it contains a finite normal supersubgroup $\mathbb{H}$, such that $\mathfrak{L}=\mathbb{G}/\mathbb{H}$ is not purely even and also a SAS supergroup, and any proper superideal of its Lie superalgebra $\mathfrak{L}$ contains
	$[\mathfrak{L}, \mathfrak{L}]$.  Moreover, $[\mathfrak{L}, \mathfrak{L}]_{\bar 0}$ has codimension at most one in $\mathfrak{L}_{\bar 0}$.
	
	These partial results pose an interesting problem of describing Lie superalgebras $\mathfrak{L}$, which are not simple, but $\mathfrak{L}_{\bar 0}$ satisfies one of the alternatives $(1-3)$ and every proper superideal of $\mathfrak{L}$ contains $[\mathfrak{L}, \mathfrak{L}]$. It seems natural to call such Lie superalgebras \emph{almost-simple}. The question immediately arises whether the commutator superideal of an almost-simple Lie superalgebra $\mathfrak{L}$ is simple? 
	
	In conclusion, we note that this article is mainly devoted to the study of the properties of SAS-supergroups. The wider class of WAS-supergroups will be investigated in an upcoming paper. 
	
	\section{Basic definitions and results}
	
	Throughout this article, $\Bbbk$ is an algebraically closed field of zero or odd characteristic. Let $\mathsf{SAlg}_{\Bbbk}$ denote the category of supercommutative superalgebras. An \emph{affine supergroup} is a representable functor $\mathbb{G}$ from $\mathsf{SAlg}_{\Bbbk}$ to the category of groups. 
	In other words, for any $A\in \mathsf{SAlg}_{\Bbbk}$
	we have
	\[\mathbb{G}(A)=\mathrm{Hom}_{\mathsf{SAlg}_{\Bbbk}}(\Bbbk[\mathbb{G}], A),\]
	where $\Bbbk[\mathbb{G}]$ is a \emph{Hopf superalgebra} representing $\mathbb{G}$. 
	
	All (super)groups are assumed to be affine and algebraic. If $\mathbb{G}, \mathbb{H}, \ldots , $ are supergroups, then their Lie superalgebras are denoted by
	$\mathfrak{G}, \mathfrak{H}, \ldots$.
	
	Similarly, their largest even supersubgroups of $\mathbb{G}_{ev}, \mathbb{H}_{ev}, \ldots$, are denoted by $G, H, \ldots$, and regarded
	as ordinary algebraic groups, unless stated otherwise. 
	
	Let $G$ be an algebraic group and $V$ be a finite dimensional $G$-module.   
	The pair $(G, V)$ is said to be a \emph{Harish-Chandra pair}, if the following conditions hold:
	\begin{enumerate}
		\item There is a symmetric bilinear map $V\times V\to\mathrm{Lie}(G)$, denoted by $[ \ , \ ]$;
		\item This map is $G$-equivariant with respect to the diagonal action of $G$ on $V\times V$ and the adjoint action of $G$ on $\mathrm{Lie}(G)$;
		\item The induced action of $\mathrm{Lie}(G)$ on $V$, denoted by the same symbol $[ \ , \ ]$,  satisfies $[[v, v], v]=0$ for all $v\in V$. 
	\end{enumerate}
	The morphism of Harish-Candra pairs $(G, V)\to (H, W)$ is a couple $f : G\to H$ and $u : V\to W$, where $f$ is a morphism of algebraic groups and $u$ is a morphism of vector spaces, such that 
	\begin{enumerate}
		\item $u (gv)= f(g)u(v), \text{ for all } g\in G, \text{ and } v\in V$;
		\item $[u(v), u(v')]=\mathrm{d}_e(f)([v, v']), \text{ for all } v, v'\in V$. 
	\end{enumerate}

	With each algebraic supergroup $\mathbb{G}$ we associate its Harish-Chandra pair $(G, \mathfrak{G}_{\bar 1})$, where $G=\mathbb{G}_{ev}$ acts on $\mathfrak{G}$ via the adjoint action
	and the bilinear map $\mathfrak{G}_{\bar 1}\times \mathfrak{G}_{\bar 1}\to\mathrm{Lie}(G)=\mathfrak{G}_{\bar 0}$ is just a restriction of the Lie bracket. 
	The functor $\mathbb{G}\mapsto (G, \mathfrak{G}_{\bar 1})$ is an equivalence of the category of algebraic supergroups to the category of Harish-Chandra pairs (see \cite[Theorem 5.4 and Theorem 6.1]{mas-shib} or \cite[Theorem 12.10]{maszub}). 
	
	Given two supergroups $\mathbb{H}$ and $\mathbb{G}$. Claiming that $\mathbb{H}\leq \mathbb{G}$ is equivalent to  $H\leq G$, $\mathfrak{H}_{\bar 1}$ is an $H$-submodule of $(\mathfrak{G}_{\bar 1})|_{H}$ and the Lie bracket on $\mathfrak{H}_{\bar 1}$ is the restriction of the Lie bracket on $\mathfrak{G}_{\bar 1}$. 
	Further, if a (closed) supersubgroup $\mathbb{H}$ of  $\mathbb{G}$ is represented by its Harish-Chandra (sub)pair $(H, \mathfrak{H}_{\bar 1})$, then  
	$\mathbb{H}$ is normal in $\mathbb{G}$ if and only if
	\begin{enumerate}
		\item $H$ is normal in $G$;
		\item $\mathfrak{H}_{\bar 1}$ is a $G$-submodule of $\mathfrak{G}_{\bar 1}$;
		\item $H\leq\ker(G\to\mathrm{GL}(\mathfrak{G}_{\bar 1}/\mathfrak{H}_{\bar 1}))$ (this is actually just the induced action of $G$ on $\mathfrak{G}_{\bar 1}/\mathfrak{H}_{\bar 1}$, i.e. $g(x+\mathfrak{H}_{\bar 1})=gx+\mathfrak{H}_{\bar 1}, g\in G, x\in\mathfrak{G}_{\bar 1}$);
		\item $[\mathfrak{G}_{\bar 1}, \mathfrak{H}_{\bar 1}]\subseteq\mathrm{Lie}(H)$.
	\end{enumerate}

	Besides, the supergroup $\mathbb{G}/\mathbb{H}$ is represented by the Harish-Chandra pair $(G/H, \mathfrak{G}_{\bar 1}/\mathfrak{H}_{\bar 1})$.
	For more details, we refer to \cite[Theorem 6.6(1)]{mas-shib} and \cite[Theorem 12.11]{maszub}. We call the conditions $(1)-(4)$ a \emph{normality criterion}. 
	\begin{lm}\label{product of normal supersubgroups}
		If $\mathbb{H}$ and $\mathbb{K}$ are normal supersubgroups of $\mathbb{G}$, then $\mathbb{H}\mathbb{K}$ is a normal supersubgroup again, and its Harish-Chandra pair 
		is equal to $(H K, \mathfrak{H}_{\bar 1}+\mathfrak{K}_{\bar 1})$.	
	\end{lm}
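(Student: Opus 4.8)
The plan is to construct the Harish-Chandra subpair $(HK,\mathfrak{H}_{\bar 1}+\mathfrak{K}_{\bar 1})$ by hand, to check that the closed supersubgroup $\mathbb{L}\leq\mathbb{G}$ it determines is normal, and finally to identify $\mathbb{L}$ with the product $\mathbb{H}\mathbb{K}$. First I would verify that $(HK,\mathfrak{H}_{\bar 1}+\mathfrak{K}_{\bar 1})$ really is a Harish-Chandra subpair of $(G,\mathfrak{G}_{\bar 1})$: the product $HK$ of the two normal closed subgroups $H$ and $K$ is again a normal closed subgroup of $G$; by the normality criterion applied to $\mathbb{H}$ and to $\mathbb{K}$, both $\mathfrak{H}_{\bar 1}$ and $\mathfrak{K}_{\bar 1}$ are $G$-submodules of $\mathfrak{G}_{\bar 1}$, hence so is their sum; the symmetric bilinear map, its $HK$-equivariance, and the identity $[[v,v],v]=0$ are inherited from $\mathfrak{G}_{\bar 1}$; and the bracket takes values in $\mathrm{Lie}(HK)$ because $[\mathfrak{H}_{\bar 1},\mathfrak{H}_{\bar 1}]\subseteq\mathrm{Lie}(H)$, $[\mathfrak{K}_{\bar 1},\mathfrak{K}_{\bar 1}]\subseteq\mathrm{Lie}(K)$, and the mixed terms satisfy $[\mathfrak{H}_{\bar 1},\mathfrak{K}_{\bar 1}]\subseteq[\mathfrak{G}_{\bar 1},\mathfrak{H}_{\bar 1}]\subseteq\mathrm{Lie}(H)$ by condition $(4)$ of the normality criterion for $\mathbb{H}$.

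Next I would check that the resulting $\mathbb{L}$ is normal in $\mathbb{G}$ using the normality criterion $(1)$--$(4)$: $(1)$ and $(2)$ have just been observed; for $(3)$, the $G$-equivariant quotient maps $\mathfrak{G}_{\bar 1}/\mathfrak{H}_{\bar 1}\twoheadrightarrow\mathfrak{G}_{\bar 1}/(\mathfrak{H}_{\bar 1}+\mathfrak{K}_{\bar 1})$ and $\mathfrak{G}_{\bar 1}/\mathfrak{K}_{\bar 1}\twoheadrightarrow\mathfrak{G}_{\bar 1}/(\mathfrak{H}_{\bar 1}+\mathfrak{K}_{\bar 1})$ force both $H$ and $K$, hence $HK$, to act trivially on $\mathfrak{G}_{\bar 1}/(\mathfrak{H}_{\bar 1}+\mathfrak{K}_{\bar 1})$; and $(4)$ follows from $[\mathfrak{G}_{\bar 1},\mathfrak{H}_{\bar 1}+\mathfrak{K}_{\bar 1}]\subseteq[\mathfrak{G}_{\bar 1},\mathfrak{H}_{\bar 1}]+[\mathfrak{G}_{\bar 1},\mathfrak{K}_{\bar 1}]\subseteq\mathrm{Lie}(H)+\mathrm{Lie}(K)\subseteq\mathrm{Lie}(HK)$. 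To finish, I would show $\mathbb{L}$ is the smallest closed supersubgroup of $\mathbb{G}$ containing both $\mathbb{H}$ and $\mathbb{K}$: the inclusions $\mathbb{H}\leq\mathbb{L}$, $\mathbb{K}\leq\mathbb{L}$ are visible on Harish-Chandra pairs, while any closed supersubgroup $\mathbb{M}$ with $\mathbb{H},\mathbb{K}\leq\mathbb{M}$ has a Harish-Chandra pair $(M,\mathfrak{N})$ with $HK\leq M$ and $\mathfrak{H}_{\bar 1}+\mathfrak{K}_{\bar 1}\subseteq\mathfrak{N}$, so that $\mathbb{L}\leq\mathbb{M}$. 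Since $\mathbb{H}$ is normal, $\mathbb{H}\mathbb{K}$ is a closed supersubgroup — the conjugation action of $\mathbb{K}$ on $\mathbb{H}$ makes $(h,k)\mapsto hk$ a homomorphism $\mathbb{H}\rtimes\mathbb{K}\to\mathbb{G}$ whose image is $\mathbb{H}\mathbb{K}$ — and it is visibly the smallest such supersubgroup. Hence $\mathbb{H}\mathbb{K}=\mathbb{L}$, which is normal with Harish-Chandra pair $(HK,\mathfrak{H}_{\bar 1}+\mathfrak{K}_{\bar 1})$.

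The steps in the first two paragraphs are routine bookkeeping with the normality criterion and the equivalence of categories. The one point that requires care — and the place where normality of $\mathbb{H}$ is genuinely used — is the last identification: a priori $\mathbb{H}\mathbb{K}$ is only a subgroup functor, and the multiplication map $\mathbb{H}\times\mathbb{K}\to\mathbb{G}$ is not a homomorphism, so one has to realize $\mathbb{H}\mathbb{K}$ as the image of the homomorphism $\mathbb{H}\rtimes\mathbb{K}\to\mathbb{G}$ and invoke that images of morphisms of affine algebraic supergroups are closed supersubgroups. Once this is granted, the argument above closes cleanly; alternatively, one can compute the Harish-Chandra pair of $\mathbb{H}\mathbb{K}$ directly as the image of the Harish-Chandra pair morphism $(H\rtimes K,\mathfrak{H}_{\bar 1}\oplus\mathfrak{K}_{\bar 1})\to(G,\mathfrak{G}_{\bar 1})$, which is $(h,k)\mapsto hk$ on groups and $(x,y)\mapsto x+y$ on odd parts, since the equivalence of categories takes images to images.
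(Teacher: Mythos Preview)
Your proof is correct and follows the same approach as the paper, which simply states ``This is a trivial consequence of the normality criterion.'' You have spelled out in full the verifications that the paper leaves implicit: that $(HK,\mathfrak{H}_{\bar 1}+\mathfrak{K}_{\bar 1})$ is a Harish-Chandra subpair, that it satisfies conditions $(1)$--$(4)$ of the normality criterion, and that the resulting supersubgroup coincides with $\mathbb{H}\mathbb{K}$. Your final paragraph, addressing why $\mathbb{H}\mathbb{K}$ is a closed supersubgroup via the homomorphism $\mathbb{H}\rtimes\mathbb{K}\to\mathbb{G}$, is a genuine point of care that the paper does not make explicit but presumably regards as standard.
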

	\begin{proof}
		This is a trivial consequence of the normality criterion. 	
	\end{proof}
	Recall that a supergroup $\mathbb{G}$ is smooth or connected, if $\mathbb{G}_{ev}=G$ is.
	\begin{lm}
		Let $\mathbb{G}$ be an algebraic supergroup. Then $\mathbb{G}$ has the largest normal smooth connected solvable supersubgroup, called the \emph{solvable radical} of $\mathbb{G}$, and it is denoted by $\mathrm{R}(\mathbb{G})$.  
	\end{lm}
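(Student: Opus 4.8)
The plan is to transfer the classical construction of the solvable radical of an algebraic group to the super setting by means of Harish-Chandra pairs. The first ingredient I would record is the following closure property: if $\mathbb{H}$ and $\mathbb{K}$ are normal smooth connected solvable supersubgroups of $\mathbb{G}$, then so is $\mathbb{H}\mathbb{K}$. Normality of $\mathbb{H}\mathbb{K}$ and the description of its Harish-Chandra pair as $(HK,\mathfrak{H}_{\bar 1}+\mathfrak{K}_{\bar 1})$ are exactly Lemma \ref{product of normal supersubgroups}. Smoothness and connectedness of $\mathbb{H}\mathbb{K}$ mean, by definition, smoothness and connectedness of $(\mathbb{H}\mathbb{K})_{ev}=HK$, which is the product of two normal smooth connected closed subgroups of the algebraic group $G$, hence smooth and connected by the classical theory. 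Finally, $\mathbb{H}$ is a normal supersubgroup of $\mathbb{H}\mathbb{K}$ and $\mathbb{H}\mathbb{K}/\mathbb{H}$ is a homomorphic image of $\mathbb{K}$, since the composite $\mathbb{K}\hookrightarrow\mathbb{H}\mathbb{K}\to\mathbb{H}\mathbb{K}/\mathbb{H}$ is surjective, as one checks on Harish-Chandra pairs ($K\to HK/H$ and $\mathfrak{K}_{\bar 1}\to(\mathfrak{H}_{\bar 1}+\mathfrak{K}_{\bar 1})/\mathfrak{H}_{\bar 1}$ are both surjective); hence $\mathbb{H}\mathbb{K}/\mathbb{H}$ is solvable, and as an extension of a solvable supergroup by a solvable supergroup, $\mathbb{H}\mathbb{K}$ is solvable. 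Alternatively, one may invoke that a supergroup is solvable precisely when its even part is, which reduces this last point to the classical statement for $HK$.

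Next I would introduce the total dimension $\dim\mathbb{H}:=\dim H+\dim_{\Bbbk}\mathfrak{H}_{\bar 1}$ of a supersubgroup $\mathbb{H}$. If $\mathbb{H}\leq\mathbb{H}'$ is a \emph{proper} inclusion of supersubgroups, then, comparing their Harish-Chandra pairs, either $H\subsetneq H'$ or $\mathfrak{H}_{\bar 1}\subsetneq\mathfrak{H}'_{\bar 1}$; in the first case $\dim H<\dim H'$ because a proper closed subgroup of a connected algebraic group has strictly smaller dimension, and in the second case $\dim_{\Bbbk}\mathfrak{H}_{\bar 1}<\dim_{\Bbbk}\mathfrak{H}'_{\bar 1}$, so in either case $\dim\mathbb{H}<\dim\mathbb{H}'$. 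Since these dimensions are bounded above by $\dim G+\dim_{\Bbbk}\mathfrak{G}_{\bar 1}$, and since the family of normal smooth connected solvable supersubgroups of $\mathbb{G}$ is nonempty (it contains the trivial supersubgroup $e$), we may choose a member $\mathrm{R}(\mathbb{G})$ of this family of maximal total dimension.

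It then remains to show that $\mathrm{R}(\mathbb{G})$ contains every normal smooth connected solvable supersubgroup $\mathbb{H}$. By the closure property above, $\mathrm{R}(\mathbb{G})\mathbb{H}$ is again a normal smooth connected solvable supersubgroup of $\mathbb{G}$, and it contains $\mathrm{R}(\mathbb{G})$; maximality of $\dim\mathrm{R}(\mathbb{G})$ then forces $\mathrm{R}(\mathbb{G})\mathbb{H}=\mathrm{R}(\mathbb{G})$, i.e. $\mathbb{H}\leq\mathrm{R}(\mathbb{G})$. In particular $\mathrm{R}(\mathbb{G})$ is the unique largest normal smooth connected solvable supersubgroup of $\mathbb{G}$, so it is well defined.

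The step I expect to require the most care is the solvability of $\mathbb{H}\mathbb{K}$: it rests on the isomorphism theorems for algebraic supergroups (to identify $\mathbb{H}\mathbb{K}/\mathbb{H}$ as a quotient of $\mathbb{K}$) and on the stability of solvability under extensions, both of which are most transparently handled through the Harish-Chandra pair dictionary recalled in the preceding section; the dimension bookkeeping and the maximality argument are then a routine adaptation of the classical construction of the solvable radical.
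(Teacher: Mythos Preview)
Your proof is correct and follows essentially the same route as the paper's: the paper's one-line proof cites Lemma~\ref{product of normal supersubgroups} for the product of normal supersubgroups and \cite[Corollary~6.4]{maszub1} (a supergroup is solvable iff its even part is) for the solvability of $\mathbb{H}\mathbb{K}$, which is precisely your ``alternative'' argument, with the dimension/maximality step left implicit. Your first argument for solvability via the extension $\mathbb{H}\unlhd\mathbb{H}\mathbb{K}$ also works but, as you anticipate, leans on the isomorphism theorems and closure of solvability under extensions for supergroups, whereas the reduction to even parts is the shorter path.
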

	\begin{proof} Use \cite[Corollary 6.4]{maszub1} and Lemma \ref{product of normal supersubgroups}.
	\end{proof}
	If $\mathrm{R}(\mathbb{G})=e$, then $\mathbb{G}$ is said to be \emph{semisimple}.
	
	Recall also that if $\mathbb{G}$ is purely odd, that is if $G=e$, then $\mathbb{G}\simeq (\mathbb{G}_a^-)^{\dim\mathfrak{g}_{\bar 1}}$, where $\mathbb{G}^-_a$ is the $0|1$-dimensional purely odd unipotent supergroup. More precisely, $\Bbbk[\mathbb{G}^-_a]$ is freely generated by an odd primitive element, or equivalently, for any superalgebra $A$ there is $\mathbb{G}^-_a(A)=(A_{\bar 1}, +)$. Any purely odd supergroup is obviously smooth, connected and abelian.
	
	A supergroup $\mathbb{G}$ is said to be \emph{split}, if the natural embedding $\mathbb{G}_{ev}\to\mathbb{G}$ is split (i.e., there is a supergroup morphism $\mathbb{G}\to\mathbb{G}_{ev}$, such that its composition with this embedding is $\mathrm{id}_{\mathbb{G}_{ev}}$). This is equivalent to the condition 
	$[\mathfrak{G}_{\bar 1}, \mathfrak{G}_{\bar 1}]=0$ (cf. \cite[Proposition 6.1]{bz}). If $\mathbb{G}$ is split, then $\mathbb{G}\simeq\mathbb{G}_{ev}\ltimes \mathbb{G}_{odd}$, where
	$\mathbb{G}_{odd}$ is a normal purely odd supersubgroup. 
	
	\begin{lm}\label{Lie superalgebra is trivial} 
		If $\mathrm{Lie}(\mathbb{G})=0$, then $\mathbb{G}$ is purely even  and \'etale. In particular, any finite smooth connected supergroup is purely odd.
	\end{lm}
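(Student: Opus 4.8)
The plan is to split the statement into two parts and handle them independently: first deduce that $\mathbb{G}$ is purely even from the fact that its odd Lie component vanishes (using the Harish--Chandra pair picture, or equivalently the structure of split supergroups), and then deduce ``étale'' from a Nakayama argument on the local ring at the identity; the final assertion about finite smooth connected supergroups then drops out. The only step that is not a routine formality is the first one: passing from $\mathfrak{G}_{\bar 1}=0$ to ``$\mathbb{G}$ is purely even'' is not transparent at the level of Hopf superalgebras, and genuinely uses the structure theory recalled above.

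First I would observe that $\mathrm{Lie}(\mathbb{G})=0$ forces $\mathfrak{G}_{\bar 1}=0$, so the Harish--Chandra pair of $\mathbb{G}$ is $(G,0)$. But $(G,0)$ is also the Harish--Chandra pair of $\mathbb{G}_{ev}$ regarded as a purely even supergroup, so by the equivalence of categories $\mathbb{G}\simeq\mathbb{G}_{ev}$, i.e.\ $\mathbb{G}$ is purely even. (Equivalently: $\mathfrak{G}_{\bar 1}=0$ gives $[\mathfrak{G}_{\bar 1},\mathfrak{G}_{\bar 1}]=0$, hence $\mathbb{G}$ is split by \cite[Proposition~6.1]{bz}, so $\mathbb{G}\simeq\mathbb{G}_{ev}\ltimes\mathbb{G}_{odd}$ with $\mathbb{G}_{odd}$ normal and purely odd; since $\mathrm{Lie}(\mathbb{G}_{odd})$ is purely odd and sits inside $\mathrm{Lie}(\mathbb{G})_{\bar 1}=\mathfrak{G}_{\bar 1}=0$, we get $\mathbb{G}_{odd}\simeq(\mathbb{G}_a^-)^{0}=e$.)

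Next, $\mathbb{G}=G$ is now an ordinary algebraic group with $\mathrm{Lie}(G)=\mathfrak{G}_{\bar 0}=0$. Writing $\mathfrak{m}$ for the augmentation ideal of $\Bbbk[G]$ and using $\mathrm{Lie}(G)\simeq(\mathfrak{m}/\mathfrak{m}^2)^{*}$, this says the cotangent space of $G$ at $e$ vanishes; Nakayama's lemma applied to the Noetherian local ring $\Bbbk[G]_{\mathfrak{m}}$ then gives $\Bbbk[G]_{\mathfrak{m}}=\Bbbk$, so $e$ is a reduced, isolated point of $G$. Since $G$ acts transitively by translations on its closed points, all of which are $\Bbbk$-rational, the same holds at every point, so $G$ is $0$-dimensional and regular, i.e.\ étale (in particular finite). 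This proves the first assertion.

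For the last assertion, if $\mathbb{G}$ is finite, smooth and connected, then the ordinary algebraic group $G=\mathbb{G}_{ev}$ is again finite, smooth and connected: $\Bbbk[G]$ is a finite-dimensional quotient of $\Bbbk[\mathbb{G}]$, and it is reduced because $G$ is smooth over the algebraically closed field $\Bbbk$, hence $\Bbbk[G]\simeq\Bbbk^{r}$ and therefore $\mathrm{Lie}(G)=0$. The first part applies to $G$ (as a purely even supergroup), so $G$ is étale; combined with connectedness this forces $r=1$, i.e.\ $G=e$. Hence $\mathbb{G}$ is purely odd, as claimed.
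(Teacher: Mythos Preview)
Your proof is correct and follows the same overall plan as the paper's: deduce ``purely even'' from the Harish--Chandra pair equivalence (the paper simply declares this ``obvious''), then establish \'etaleness for the resulting ordinary group $G$, and finally treat the last sentence by observing that a finite smooth connected $G$ has $\dim G=\dim\mathrm{Lie}(G)=0$.

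The one genuine difference is the \'etaleness step. The paper quotes \cite[Proposition~10.15]{milne} (a smooth subgroup with the same Lie algebra has the same identity component) applied to $e\leq G^0$ to get $G^0=e$, hence $G$ \'etale. Your route instead uses $\mathrm{Lie}(G)\simeq(\mathfrak m/\mathfrak m^2)^*=0$ and Nakayama to force $\Bbbk[G]_{\mathfrak m}=\Bbbk$, then homogeneity under translations. Both arguments are short and standard; the paper's citation buys brevity, while your argument is self-contained and makes the geometric content (vanishing cotangent space $\Rightarrow$ reduced isolated point) explicit. For the final clause your argument is slightly more roundabout than the paper's one-liner ``$0=\dim G=\dim\mathrm{Lie}(G)$ implies $G$ is \'etale, hence trivial'', but it reaches the same conclusion without gaps.
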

	\begin{proof}
		The first statement is obvious. Next, if $\mathbb{G}=G$, then \cite[Proposition 10.15]{milne}, applied to $e\leq G^0$, infers $G^0=e$. 
		If $\mathbb{G}$ is a finite smooth connected supergroup, then $0=\dim G=\dim\mathrm{Lie}(G)$ implies $G$ is \'etale, hence trivial.
	\end{proof}	
	
	A smooth connected supergroup $\mathbb{G}$ is said to be \emph{almost-simple in the weak sense} (shortly, WAS-supergroup), if it is semi-simple, noncommutative and its only smooth connected normal supersubgroups are $\mathbb{G}$ or $e$. 
	If additionally,  any proper normal supersubgroup of $\mathbb{G}$ is finite, then $\mathbb{G}$ is called \emph{almost-simple in the strong sense} (shortly, SAS-supergroup). If we remove the semisimplicity condition, then $\mathbb{G}$ is said to be \emph{almost pseudo-simple} in the weak or strong sense (shortly, WAPS-supergroup or SAPS-supergroup),  respectively (compare with \cite[Definitions 19.7 and 19.8]{milne}). 
	
	Observe that if $\mathbb{G}$ is purely even, then these definitions are equivalent to each  other. Moreover, any WAPS-group is certainly WAS. In fact, if $G$ is WAPS and $H$ is a proper normal subgroup of $G$, then 
	by \cite[Corollary 1.39 and Corollary 1.87]{milne} there is a normal smooth subgroup $H_{red}$ in $G$, such that $H_{red}(\Bbbk)=H(\Bbbk)$ and $H_{red}\leq H$.	More precisely, by Hilbert Nullstellensatz there is $\Bbbk[H_{red}]=\Bbbk[H]/\mathsf{rad}(\Bbbk[H])$. By \cite[Proposition 1.52 and Corollary 1.85]{milne}, $H_{red}^0$ is normal, smooth and connected. Since $H_{red}^0\neq G$, $H_{red}^0$ is trivial. In particular, $H_{red}$ is finite, therefore $H$ is finite as well.
	The proof of the converse statement is similar. 
	
	Finally, if $\mathrm{R}(G)\neq e$,  then $\mathrm{R}(G)=G$. The derived subgroup
	$\mathcal{D}(G)$ is a proper normal smooth connected subgroup by \cite[Corollary 6.19(b)]{milne}, hence $\mathcal{D}(G)=e$, a contradiction! 
	\begin{lm}\label{when a quotient of SAS is SAS}
		Let $G$ be a SAS-group. If $H$ is a finite normal subgroup of $G$, then $G/H$ is SAS.	
	\end{lm}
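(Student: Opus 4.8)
The plan is to verify directly that $G/H$ satisfies the defining conditions of a WAPS-supergroup, and then to invoke the two facts recorded above: every WAPS-group is WAS, and for a purely even supergroup the notions SAS, WAS, SAPS and WAPS coincide. Write $\pi\colon G\to G/H$ for the quotient morphism; since $G$ is smooth and connected, so is $G/H$, and $\pi$ is surjective with finite kernel $H$.

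First I would check that $G/H$ is non-commutative. If it were commutative, then $\mathcal{D}(G)\leq H$. But $\mathcal{D}(G)$ is a smooth connected normal subgroup of $G$ by \cite[Corollary 6.19(b)]{milne}, and $G$, being SAS, is WAS; hence $\mathcal{D}(G)=G$ or $\mathcal{D}(G)=e$. The second case makes $G$ commutative, contradicting the fact that a WAS-group is non-commutative. In the first case $G=\mathcal{D}(G)\leq H$ is finite, so $G$ is trivial by Lemma \ref{Lie superalgebra is trivial}, again contradicting non-commutativity. Hence $G/H$ is non-commutative, and in particular $G/H\neq e$.

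Next I would show that the only smooth connected normal subgroups of $G/H$ are $G/H$ and $e$. Given such a subgroup $L$, set $N=\pi^{-1}(L)$; this is a normal subgroup of $G$, possibly disconnected, so I pass to its identity component $N^{0}$, which is a smooth connected normal subgroup of $G$. By the WAS property of $G$, either $N^{0}=G$ or $N^{0}=e$. In the first case $N=G$, hence $L=\pi(N)=G/H$. In the second case $N$ is finite (its group of components is finite), so $L=\pi(N)$ is finite; being smooth and connected, $L=e$ by Lemma \ref{Lie superalgebra is trivial}. Thus $G/H$ is a smooth connected non-commutative supergroup whose only smooth connected normal supersubgroups are $G/H$ and $e$, i.e. a WAPS-group.

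Finally, by the observation that every WAPS-group is WAS, $G/H$ is WAS, and since $G/H$ is purely even this is the same as being SAS. The step I expect to be the only mild subtlety is the second one: the preimage under $\pi$ of a connected subgroup need not be connected, so one must work throughout with identity components; after that, the whole argument reduces to applying the WAS property of $G$, and there is no serious obstacle.
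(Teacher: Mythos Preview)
Your overall strategy---verify that $G/H$ is WAPS and then invoke the equivalence of WAPS, WAS and SAS for purely even groups---is sound, but the second step contains a gap in positive characteristic. When you set $N=\pi^{-1}(L)$ and pass to $N^{0}$, you assert that $N^{0}$ is smooth; this need not hold. The map $N\to L$ is the base change of $\pi$, so its fibres are copies of $H$, and if $H$ is non-reduced (e.g.\ $H=\mu_p$ inside $G=\mathrm{SL}_p$ when $p=\mathrm{char}\,\Bbbk$) then $N$ is non-reduced as well; taking the identity component does not cure this, since $N^{0}$ carries the same nilpotents. Thus you cannot invoke the WAS property of $G$ on $N^{0}$ as written.

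The repair is immediate: use the SAS hypothesis on $G$ directly rather than detouring through WAS. If $L\neq G/H$ then $N=\pi^{-1}(L)\neq G$ is a proper normal subgroup of $G$, hence finite; so $L=\pi(N)$ is finite, and being smooth and connected it is trivial by Lemma~\ref{Lie superalgebra is trivial}. (Alternatively, replace $N^{0}$ by $(N_{\mathrm{red}})^{0}$, which is smooth connected normal by \cite[Corollary 1.39, Corollary 1.87, Proposition 1.52]{milne}, and proceed as you intended.) This is in fact exactly the opening line of the paper's proof. From there the paper takes a slightly different route: rather than checking WAPS and quoting the equivalence, it verifies the SAS conditions for $G/H$ head-on by showing $\mathrm{R}(G/H)=e$---arguing that a non-trivial radical would force $G$ to be solvable, hence $\mathcal{D}(G)$ finite smooth connected and therefore trivial, contradicting non-commutativity. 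Once your gap is closed the two arguments are essentially equivalent in difficulty; yours packages the semisimplicity into the cited equivalence, while the paper unpacks it explicitly.
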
	
	\begin{proof}
		It is clear that any proper normal subgroup of $G/H$ is finite. Assume that $\mathrm{R}(G/H)\neq e$. Let $R$ be the preimage of $\mathrm{R}(G/H)$ in $G$. Since $R$ is infinite, we have $R=G$ since $G$ is a SAS group. Thus $G/H$ is solvable and therefore, a $k$-th
		derived subgroup $\mathcal{D}^k(G)$ is contained in $H$ for sufficiently large positive integer $k$. By \cite[Corollary 6.19(b)]{milne}, $\mathcal{D}^k(G)$ is smooth and connected, 
		hence trivial. In particular, $\mathcal{D}(G)$ is a proper normal subgroup, whence trivial by the same reasoning, meaning that $G$ is SAS, $\mathcal{D}(G)$ is finite, but it is also smooth and connected. This contradiction completes the proof.
	\end{proof}	
	\section{Auxiliary results on WAPS and SAPS-supergroups}
	
	\begin{lm}\label{SAS is WAS}
		Any SAS-supergroup is a WAS-supergroup.	If $\mathrm{char}\Bbbk=0$, then the converse is also true.
	\end{lm}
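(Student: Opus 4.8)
The plan is to prove the two implications separately, translating everything through the Harish-Chandra dictionary and the normality criterion of Section~1.

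\textbf{SAS $\Rightarrow$ WAS.} A SAS-supergroup $\mathbb{G}$ is, by hypothesis, smooth, connected, semisimple and non-commutative, so the only point to verify is that a proper \emph{smooth connected} normal supersubgroup $\mathbb{H}$ of $\mathbb{G}$ must be trivial. Since $\mathbb{H}\neq\mathbb{G}$, it is a proper normal supersubgroup, hence finite by the defining property of SAS-supergroups; by Lemma~\ref{Lie superalgebra is trivial} a finite smooth connected supergroup is purely odd, and a purely odd supergroup is abelian, in particular solvable. Thus $\mathbb{H}$ is a smooth connected solvable normal supersubgroup, so $\mathbb{H}\subseteq\mathrm{R}(\mathbb{G})=e$ by semisimplicity, i.e.\ $\mathbb{H}=e$. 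Hence $\mathbb{G}$ is WAS.

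\textbf{WAS $\Rightarrow$ SAS when $\mathrm{char}\,\Bbbk=0$.} In this case every algebraic supergroup is smooth, since $\mathbb{G}_{ev}$ is a finite-type affine group scheme over a field of characteristic zero and hence smooth by Cartier's theorem. Let $\mathbb{G}$ be a WAS-supergroup and $\mathbb{N}$ a proper normal supersubgroup; I must show $\mathbb{N}$ is finite. Consider the identity component $\mathbb{N}^0$: its Harish-Chandra pair is $(N^0,\mathfrak{N}_{\bar 1})$, where $N^0=(\mathbb{N}_{ev})^0$, because passing to the identity component changes neither the odd part of the Lie superalgebra nor $\mathrm{Lie}(N)$. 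I would then check, via the normality criterion, that $\mathbb{N}^0$ is normal in $\mathbb{G}$: condition (1) holds since $N^0$ is characteristic in $N$ and $N$ is normal in $G$; conditions (2)--(4) follow immediately from the ones already valid for $\mathbb{N}\trianglelefteq\mathbb{G}$, as $\mathfrak{N}_{\bar 1}$ is unchanged, $\mathrm{Lie}(N^0)=\mathrm{Lie}(N)$, and $N^0\leq N$. Now $\mathbb{N}^0$ is a smooth connected normal supersubgroup of the WAS-supergroup $\mathbb{G}$, so $\mathbb{N}^0=\mathbb{G}$ or $\mathbb{N}^0=e$. The first alternative would force $N^0=G$ and $\mathfrak{N}_{\bar 1}=\mathfrak{G}_{\bar 1}$, hence $\mathbb{N}=\mathbb{G}$, contradicting properness. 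Therefore $\mathbb{N}^0=e$, i.e.\ $(\mathbb{N}_{ev})^0=e$ and $\mathfrak{N}_{\bar 1}=0$; the latter says $\mathbb{N}$ is purely even, and then the former says $\mathbb{N}=\mathbb{N}_{ev}$ is finite, as required.

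The argument is largely formal bookkeeping with the Harish-Chandra correspondence; the step I expect to require the most care is the verification that the identity component $\mathbb{N}^0$ is normal in the ambient supergroup $\mathbb{G}$ and not merely in $\mathbb{N}$, which is exactly where the normality criterion of Section~1 is used. Beyond that, I do not anticipate any genuine obstacle.
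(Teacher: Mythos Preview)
Your proof is correct and follows essentially the same route as the paper's: for SAS $\Rightarrow$ WAS you both invoke Lemma~\ref{Lie superalgebra is trivial} to see a proper smooth connected normal supersubgroup is purely odd and hence lies in $\mathrm{R}(\mathbb{G})=e$, and for the converse in characteristic zero you both pass to the identity component $\mathbb{N}^0=(N^0,\mathfrak{N}_{\bar 1})$ and check it is normal in $\mathbb{G}$ via the normality criterion. The only difference is cosmetic: the paper dispatches the normality of $N^0$ in $G$ by a citation to \cite[Proposition~1.52]{milne}, whereas you spell it out as ``characteristic in $N$, normal in $G$''.
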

	\begin{proof}
		Let $\mathbb{G}$ be a SAS-supergroup. If $\mathbb{H}$ is a normal smooth connected supersubgroup and $\mathbb{H}\neq \mathbb{G}$, then $\mathbb{H}$ is finite. Lemma \ref{Lie superalgebra is trivial} implies that $\mathbb{H}$ is purely odd, hence abelian. Therefore,  
		$\mathbb{H}\leq\mathrm{R}(\mathbb{G})=e$.
		
		If $\mathrm{char}\Bbbk=0$, then any algebraic supergroup is smooth. In particular, if $\mathbb{H}$ is a proper normal supersubgroup of $\mathbb{G}$, then its connected component $\mathbb{H}^0$ is represented
		by the pair $(H^0, \mathfrak{H}_{\bar 1})$, hence normal again by \cite[Proposition 1.52]{milne} and the normality condition. If $\mathbb{G}$ is WAS, then $\mathbb{H}^0=e$.
	\end{proof}
	\begin{lm}\label{some simple case}
		Let $\mathbb{G}$ be a smooth connected algebraic supergroup. If $\mathfrak{G}$ is simple, then $\mathbb{G}$ is a WAS-supergroup.	
	\end{lm}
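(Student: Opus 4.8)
The plan is to verify directly the three conditions defining a WAS-supergroup: that $\mathbb{G}$ is semisimple, that it is noncommutative, and that its only smooth connected normal supersubgroups are $e$ and $\mathbb{G}$. Noncommutativity is immediate, since a simple Lie superalgebra is by definition not abelian, whereas $\mathbb{G}$ commutative would force $\mathfrak{G}$ to be abelian; in particular $\mathbb{G}\neq e$. So the substantive point is the condition on normal supersubgroups, and semisimplicity will then be extracted from it.

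For that condition, I would take a smooth connected normal supersubgroup $\mathbb{H}\leq\mathbb{G}$ with Harish-Chandra pair $(H,\mathfrak{H}_{\bar 1})$ and first check that $\mathfrak{H}=\mathfrak{H}_{\bar 0}\oplus\mathfrak{H}_{\bar 1}=\mathrm{Lie}(\mathbb{H})$ is a superideal of $\mathfrak{G}$. This is bookkeeping with the normality criterion $(1)$--$(4)$: $\mathfrak{H}_{\bar 0}=\mathrm{Lie}(H)$ is an ideal of $\mathrm{Lie}(G)=\mathfrak{G}_{\bar 0}$ because $H\trianglelefteq G$; differentiating the $G$-action on $\mathfrak{H}_{\bar 1}$ from $(2)$ gives $[\mathfrak{G}_{\bar 0},\mathfrak{H}_{\bar 1}]\subseteq\mathfrak{H}_{\bar 1}$; and $(4)$ gives $[\mathfrak{G}_{\bar 1},\mathfrak{H}_{\bar 1}]\subseteq\mathrm{Lie}(H)=\mathfrak{H}_{\bar 0}$. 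Since $\mathfrak{G}$ is simple, $\mathfrak{H}$ is $0$ or $\mathfrak{G}$. If $\mathfrak{H}=0$, then Lemma \ref{Lie superalgebra is trivial} forces $\mathbb{H}$ to be purely even and \'etale, hence $\mathbb{H}=e$ by connectedness. If $\mathfrak{H}=\mathfrak{G}$, then $\mathrm{Lie}(H)=\mathfrak{G}_{\bar 0}=\mathrm{Lie}(G)$ and $\mathfrak{H}_{\bar 1}=\mathfrak{G}_{\bar 1}$; as $H\leq G$ is closed with $H$ and $G$ both smooth and connected, equality of Lie algebras (hence of dimensions) forces $H=G$, so the Harish-Chandra pairs of $\mathbb{H}$ and $\mathbb{G}$ coincide and $\mathbb{H}=\mathbb{G}$ by the equivalence of categories.

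Semisimplicity then follows: $\mathrm{R}(\mathbb{G})$ is a smooth connected normal (solvable) supersubgroup, so by the previous step $\mathrm{R}(\mathbb{G})=e$ or $\mathrm{R}(\mathbb{G})=\mathbb{G}$. In the second case $\mathbb{G}$ is solvable, hence so is $\mathfrak{G}=\mathrm{Lie}(\mathbb{G})$; but a simple Lie superalgebra satisfies $[\mathfrak{G},\mathfrak{G}]=\mathfrak{G}\neq 0$ and is therefore not solvable, a contradiction. (Alternatively: if $\mathbb{G}$ were solvable then $\mathcal{D}(\mathbb{G})$ would be a proper smooth connected normal supersubgroup, hence trivial, contradicting noncommutativity.) Thus $\mathrm{R}(\mathbb{G})=e$ and $\mathbb{G}$ is a WAS-supergroup.

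I expect the main obstacle to be the case $\mathfrak{H}=\mathfrak{G}$, i.e.\ showing that a smooth connected normal supersubgroup with the full Lie superalgebra must be all of $\mathbb{G}$: this is where smoothness of $\mathbb{G}$ is genuinely used (via $\dim G=\dim\mathrm{Lie}(G)$, so that a closed connected subgroup of full dimension in the connected group $G$ is $G$ itself) and where the Harish-Chandra dictionary is needed to transfer the conclusion from the even part to the whole supergroup. The rest is a routine application of the normality criterion together with the simplicity of $\mathfrak{G}$.
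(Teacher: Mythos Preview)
Your proof is correct and follows essentially the same route as the paper's: reduce to the dichotomy $\mathfrak{H}=0$ or $\mathfrak{H}=\mathfrak{G}$ by simplicity, dispose of the first case via Lemma~\ref{Lie superalgebra is trivial}, and of the second via \cite[Proposition 10.15]{milne}; then handle $\mathrm{R}(\mathbb{G})$ by noting its Lie superalgebra is a solvable superideal, hence zero. The only cosmetic difference is that the paper treats semisimplicity directly (solvable superideal in a simple Lie superalgebra is zero) rather than deducing it from the already-established classification of smooth connected normal supersubgroups, but both arguments amount to the same thing; your verification that $\mathfrak{H}$ is a superideal from the normality criterion is more explicit than the paper's, which takes this as known (one small point: you omitted the inclusion $[\mathfrak{G}_{\bar 1},\mathfrak{H}_{\bar 0}]\subseteq\mathfrak{H}_{\bar 1}$, which comes from differentiating condition~(3)).
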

	\begin{proof}
		If $\mathbb{H}$ is its normal smooth connected supersubgroup, then either $\mathfrak{H}=\mathfrak{G}$ or $\mathfrak{H}=0$ since $\mathfrak{G}$ is simple. In the first case $\mathfrak{H}_{\bar 0}=\mathfrak{G}_{\bar 0}$ infers $H=G$ (use again \cite[Proposition 10.15]{milne}), hence $\mathbb{H}=\mathbb{G}$. In the second case $\mathbb{H}$ is purely even and \'etale by Lemma \ref{Lie superalgebra is trivial}, hence trivial. Similarly, if $\mathbb{K}=\mathrm{R}(\mathbb{G})$ is the solvable radical of $\mathbb{G}$, then its Lie superalgebra $\mathfrak{K}$ is solvable as well, hence it equals to zero. Again, $\mathbb{K}$ is purely even and \'etale, hence trivial. 	
	\end{proof}

	Let $(G-1)\mathfrak{G}_{\bar 1}$ denote the smallest $G$-submodule $\mathfrak{V}$ in $\mathfrak{G}_{\bar 1}$ such that $G$ acts trivially on $\mathfrak{G}_{\bar 1}/\mathfrak{V}$.
	The normality criterion immediately implies that $(G, (G-1)\mathfrak{G}_{\bar 1})$ represents a normal supersubgroup of $\mathbb{G}$. In particular, if $\mathbb{G}$ is 
	WAPS or SAPS, then there should be $(G-1)\mathfrak{G}_{\bar 1}=\mathfrak{G}_{\bar 1}$. 
	
	\begin{lm}\label{a very particular case}
		Assume that $G$ is a WAPS-group. Then $\mathbb{G}$ is a WAPS-supergroup if and only if the following conditions hold : 
		\begin{enumerate}
			\item $\mathfrak{G}_{\bar 1}=(G-1)\mathfrak{G}_{\bar 1}$;
			\item There are no nontrivial $G$-submodules  $\mathfrak{W}$ of $\mathfrak{G}_{\bar 1}$, such that $[\mathfrak{G}_{\bar 1}, \mathfrak{W}]=0$.
		\end{enumerate}
		Similarly, if $G$ is SAPS (or equivalently, if $G$ is WAS), then $\mathbb{G}$ is SAPS if and only if it satisfies the condition $(1)$.
	\end{lm}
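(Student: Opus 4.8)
The plan is to test an arbitrary normal supersubgroup $\mathbb{H}$ of $\mathbb{G}$ against its Harish-Chandra pair $(H,\mathfrak{H}_{\bar 1})$, using the normality criterion $(1)$--$(4)$ together with the assumption on $G$ to pin down $H$, and then to read off $\mathfrak{H}_{\bar 1}$ from conditions $(1)$ and $(2)$. For the necessity of $(1)$ in both cases I would use the normal supersubgroup with Harish-Chandra pair $(G,(G-1)\mathfrak{G}_{\bar 1})$ exhibited just before the statement: its even part is $G$, so it is smooth and connected, and it is infinite because $G$ is smooth, connected and noncommutative, hence nontrivial; thus it is neither $e$ nor a finite supergroup. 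Consequently, if $\mathbb{G}$ is WAPS it must coincide with $\mathbb{G}$, and if $\mathbb{G}$ is SAPS it again must coincide with $\mathbb{G}$ (a proper normal supersubgroup would have to be finite); either way $(G-1)\mathfrak{G}_{\bar 1}=\mathfrak{G}_{\bar 1}$.

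For the necessity of $(2)$ in the WAPS case, suppose $\mathfrak{W}\subseteq\mathfrak{G}_{\bar 1}$ is a nonzero $G$-submodule with $[\mathfrak{G}_{\bar 1},\mathfrak{W}]=0$. I would check that $(e,\mathfrak{W})$ is a Harish-Chandra pair: the required symmetric bilinear map $\mathfrak{W}\times\mathfrak{W}\to\mathrm{Lie}(e)=0$ is the zero map, which is consistent with the restriction of the bracket since $[\mathfrak{W},\mathfrak{W}]\subseteq[\mathfrak{G}_{\bar 1},\mathfrak{W}]=0$, and the equivariance and Jacobi-type axioms are vacuous. It is a supersubgroup of $\mathbb{G}$ since $\mathfrak{W}$ is a subspace of $\mathfrak{G}_{\bar 1}$ closed under the (zero) bracket, and it is normal: conditions $(1)$ and $(3)$ of the normality criterion are trivial, $(2)$ is the hypothesis that $\mathfrak{W}$ is a $G$-submodule, and $(4)$ reads $[\mathfrak{G}_{\bar 1},\mathfrak{W}]\subseteq\mathrm{Lie}(e)=0$, again by hypothesis. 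Being purely odd it is smooth and connected, and it is proper because its even part is $e\neq G$; so $\mathbb{G}$ being WAPS forces $\mathfrak{W}=0$.

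For sufficiency, assume $(1)$ holds (and $(2)$ in the WAPS case). First, $\mathbb{G}$ is smooth, connected and noncommutative because $G$ is. Let $\mathbb{H}=(H,\mathfrak{H}_{\bar 1})$ be a normal supersubgroup. If $H=G$, then normality condition $(3)$ says that $G$ acts trivially on $\mathfrak{G}_{\bar 1}/\mathfrak{H}_{\bar 1}$, so by minimality $\mathfrak{H}_{\bar 1}\supseteq (G-1)\mathfrak{G}_{\bar 1}=\mathfrak{G}_{\bar 1}$ using $(1)$, whence $\mathbb{H}=\mathbb{G}$. In the WAPS case, assume in addition that $\mathbb{H}$ is smooth and connected; then $H$ is a smooth connected normal subgroup of the WAPS-group $G$, so $H=G$ (done) or $H=e$. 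If $H=e$, then condition $(4)$ gives $[\mathfrak{G}_{\bar 1},\mathfrak{H}_{\bar 1}]=\mathrm{Lie}(e)=0$ with $\mathfrak{H}_{\bar 1}$ a $G$-submodule, so $(2)$ forces $\mathfrak{H}_{\bar 1}=0$ and $\mathbb{H}=e$; hence $\mathbb{G}$ is WAPS. In the SAPS case, take $\mathbb{H}$ proper; then $H\neq G$ by the above, so $H$ is finite since $G$ is SAPS, and therefore $\mathbb{H}$ is finite, because $\Bbbk[\mathbb{H}]$ is finite-dimensional as soon as $\Bbbk[H]$ is and $\mathfrak{H}_{\bar 1}$ is finite-dimensional; hence $\mathbb{G}$ is SAPS.

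The only genuinely delicate point is the claim that $(e,\mathfrak{W})$ is an honest normal, smooth, connected, proper supersubgroup rather than a mere formal pair; but, as indicated, each Harish-Chandra axiom and each of the normality conditions $(1)$--$(4)$ collapses to the single hypothesis $[\mathfrak{G}_{\bar 1},\mathfrak{W}]=0$ together with the $G$-stability of $\mathfrak{W}$, while smoothness and connectedness are automatic for purely odd supergroups. The one external fact used repeatedly — that a finite smooth connected (purely even) group over $\Bbbk$ is trivial — is the even case of Lemma~\ref{Lie superalgebra is trivial}. Everything else is a direct bookkeeping with the normality criterion and the definition of WAPS (resp. SAPS).
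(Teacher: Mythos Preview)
Your proof is correct and follows essentially the same approach as the paper's own (very terse) argument: analyze a normal supersubgroup $\mathbb{H}=(H,\mathfrak{H}_{\bar 1})$ via the normality criterion, use the WAPS/SAPS hypothesis on $G$ to force $H\in\{e,G\}$ (resp.\ $H$ finite), and then read off $\mathfrak{H}_{\bar 1}$ from conditions $(1)$ and $(2)$, while for necessity you exhibit the two explicit normal supersubgroups $(G,(G-1)\mathfrak{G}_{\bar 1})$ and $(e,\mathfrak{W})$. Your version is considerably more detailed than the paper's, which compresses everything into two sentences and dispatches the SAPS case with ``the proof of the second statement is similar.''
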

	\begin{proof}
		By the normality criterion, if $\mathbb{H}$ is a proper smooth connected normal supersubgroup of $\mathbb{G}$, then either $H=G, \mathfrak{H}_{\bar 1}\neq \mathfrak{G}_{\bar 1}$ and 
		$G=\ker(G\to\mathrm{GL}(\mathfrak{G}_{\bar 1}/\mathfrak{H}_{\bar 1}))$, or $H=1$ and $[\mathfrak{G}_{\bar 1}, \mathfrak{H}_{\bar 1}]=0$, by the normality criterion. The proof of the second statement is similar.  
	\end{proof}
	\begin{lm}\label{a converse}
		Assume that $\mathbb{G}/\mathbb{H}$ is a WAPS-supergroup, where $\mathbb{H}$ is a finite normal supersubgroup of $\mathbb{G}$.	Then $\mathbb{G}$ is WAPS
		if and only if the conditions $(1-2)$ of Lemma \ref{a very particular case} hold. 
	\end{lm}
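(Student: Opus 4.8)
The plan is to prove the two implications separately; the forward one is routine --- it follows directly from the normality criterion, much as in the proof of Lemma \ref{a very particular case}, and does not use $\mathbb{H}$ --- while the content lies in the reverse implication, which I will obtain by transporting normal supersubgroups of $\mathbb{G}$ into the WAPS-supergroup $\mathbb{G}/\mathbb{H}$. For the forward direction, assume $\mathbb{G}$ is WAPS. Condition $(1)$ is the remark recorded just before Lemma \ref{a very particular case}: $(G,(G-1)\mathfrak{G}_{\bar 1})$ represents a smooth connected normal supersubgroup whose even part $G$ is nontrivial (otherwise $\mathbb{G}$ would be purely odd, hence abelian), so it equals $\mathbb{G}$ and $(G-1)\mathfrak{G}_{\bar 1}=\mathfrak{G}_{\bar 1}$. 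For condition $(2)$, if $\mathfrak{W}\neq 0$ is a $G$-submodule of $\mathfrak{G}_{\bar 1}$ with $[\mathfrak{G}_{\bar 1},\mathfrak{W}]=0$, then $(e,\mathfrak{W})$ is a Harish-Chandra subpair of $(G,\mathfrak{G}_{\bar 1})$ --- the bracket on $\mathfrak{W}$ lands in $\mathrm{Lie}(e)=0$ since $[\mathfrak{W},\mathfrak{W}]\subseteq[\mathfrak{G}_{\bar 1},\mathfrak{W}]=0$ --- on which all four clauses of the normality criterion hold (clauses $(1)$ and $(3)$ trivially, clause $(2)$ by assumption, clause $(4)$ because $[\mathfrak{G}_{\bar 1},\mathfrak{W}]=0$); it represents a proper purely odd, hence smooth connected, normal supersubgroup, which must be $e$, so $\mathfrak{W}=0$.

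For the reverse direction, assume $(1)$--$(2)$; recall that $\mathbb{G}$ is smooth connected, and it is non-commutative because its quotient $\mathbb{G}/\mathbb{H}$ is, so it remains to show that any proper smooth connected normal supersubgroup $\mathbb{N}$ of $\mathbb{G}$ is trivial. By Lemma \ref{product of normal supersubgroups}, $\mathbb{N}\mathbb{H}$ is normal in $\mathbb{G}$ with Harish-Chandra pair $(NH,\mathfrak{N}_{\bar 1}+\mathfrak{H}_{\bar 1})$; as it contains $\mathbb{H}$, the pair $(NH/H,(\mathfrak{N}_{\bar 1}+\mathfrak{H}_{\bar 1})/\mathfrak{H}_{\bar 1})$ satisfies the normality criterion inside $\mathbb{G}/\mathbb{H}=(G/H,\mathfrak{G}_{\bar 1}/\mathfrak{H}_{\bar 1})$, so $\mathbb{N}\mathbb{H}/\mathbb{H}$ is a normal supersubgroup of $\mathbb{G}/\mathbb{H}$, and it is smooth connected because its even part $NH/H\simeq N/(N\cap H)$ is a quotient of the smooth connected group $N$ by the finite normal subgroup $N\cap H$. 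Since $\mathbb{G}/\mathbb{H}$ is WAPS, $\mathbb{N}\mathbb{H}/\mathbb{H}$ equals either $e$ or $\mathbb{G}/\mathbb{H}$, and I would treat the two cases. If $\mathbb{N}\mathbb{H}/\mathbb{H}=e$, then $\mathbb{N}\subseteq\mathbb{N}\mathbb{H}=\mathbb{H}$, so $\mathbb{N}$ is finite; being smooth connected it is purely odd by Lemma \ref{Lie superalgebra is trivial}, so $N=e$, whence clause $(4)$ gives $[\mathfrak{G}_{\bar 1},\mathfrak{N}_{\bar 1}]\subseteq\mathrm{Lie}(e)=0$ with $\mathfrak{N}_{\bar 1}$ a $G$-submodule by clause $(2)$, and condition $(2)$ of the lemma forces $\mathfrak{N}_{\bar 1}=0$, so $\mathbb{N}=e$. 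If instead $\mathbb{N}\mathbb{H}=\mathbb{G}$, then on even parts $NH=G$ with $H$ finite, so $G/N\simeq H/(H\cap N)$ is finite; it is also smooth (a quotient of the smooth group $G$ by the smooth normal subgroup $N$) and connected, hence trivial by Lemma \ref{Lie superalgebra is trivial}, so $N=G$; then properness of $\mathbb{N}$ forces $\mathfrak{N}_{\bar 1}\subsetneq\mathfrak{G}_{\bar 1}$, while clause $(3)$ says $G=N$ acts trivially on $\mathfrak{G}_{\bar 1}/\mathfrak{N}_{\bar 1}$, so by the definition of $(G-1)\mathfrak{G}_{\bar 1}$ we get $(G-1)\mathfrak{G}_{\bar 1}\subseteq\mathfrak{N}_{\bar 1}\subsetneq\mathfrak{G}_{\bar 1}$, contradicting $(1)$. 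Hence a proper $\mathbb{N}$ must fall into the first case, so $\mathbb{N}=e$; therefore $\mathbb{G}$ is WAPS.

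The step I expect to be the main obstacle is the quotient bookkeeping in the reverse direction: verifying carefully that $\mathbb{N}\mathbb{H}/\mathbb{H}$ is a smooth connected normal supersubgroup of $\mathbb{G}/\mathbb{H}$ --- combining the Harish-Chandra pair computation of Lemma \ref{product of normal supersubgroups} with the normality criterion applied to the pair $(G/H,\mathfrak{G}_{\bar 1}/\mathfrak{H}_{\bar 1})$, and using that smoothness and connectedness are preserved under quotients by finite normal subgroups --- together with the ancillary fact that a finite smooth connected ordinary group is trivial. Once $N\in\{e,G\}$ has been established, the remainder is exactly the argument of Lemma \ref{a very particular case}.
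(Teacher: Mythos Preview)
Your proof is correct and follows essentially the same approach as the paper's: in the reverse direction you both push a smooth connected normal $\mathbb{N}$ into $\mathbb{G}/\mathbb{H}$ via $\mathbb{N}\mathbb{H}/\mathbb{H}$, use the WAPS property to get the dichotomy $\mathbb{N}\leq\mathbb{H}$ or $\mathbb{N}\mathbb{H}=\mathbb{G}$, and dispose of each case via Lemma~\ref{Lie superalgebra is trivial} together with conditions~$(1)$--$(2)$. The only cosmetic difference is that in the case $\mathbb{N}\mathbb{H}=\mathbb{G}$ the paper works with the supergroup quotient $\mathbb{G}/\mathbb{N}\simeq\mathbb{H}/(\mathbb{N}\cap\mathbb{H})$ directly (finite, smooth, connected, hence purely odd), whereas you pass to even parts first; both reach $N=G$ the same way.
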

	\begin{proof}
		Let $\mathbb{R}$ be a smooth connected normal supersubgroup of $\mathbb{G}$. Then $\mathbb{R}\mathbb{H}/\mathbb{H}$ is a smooth connected normal supersubgroup of $\mathbb{G}/\mathbb{H}$, hence either $\mathbb{R}\mathbb{H}=\mathbb{G}$ or $\mathbb{R}\leq\mathbb{H}$. In the first case, $\mathbb{G}/\mathbb{R}=\mathbb{R}\mathbb{H}/\mathbb{R}\simeq\mathbb{H}/(\mathbb{R}\cap\mathbb{H})$ is a finite smooth connected supergroup, hence it is purely odd by  lemma \ref{Lie superalgebra is trivial}.
		The latter implies $G=R$ and $G$ acts trivially on $\mathfrak{G}_{\bar 1}/\mathfrak{R}_{\bar 1}$, by the normality criteria. 
		In the second case $\mathbb{R}$ is purely odd, hence $[\mathfrak{G}_{\bar 1}, \mathfrak{R}_{\bar 1}]=0$.
	\end{proof}
	\begin{lm}\label{if quotient is SAPS}
		Assume that $\mathbb{H}$ is a finite normal supersubgroup of algebraic supergroup $\mathbb{G}$. If $\mathbb{G}$ is SAPS and $\mathbb{G}$ is not solvable, then $\mathbb{G}/\mathbb{H}$ is SAPS too. The converse is true
		if and only if $\mathbb{G}$ satisfies the condition $(1)$ from Lemma \ref{a very particular case}.	
	\end{lm}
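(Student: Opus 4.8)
The plan is to verify directly the conditions defining a SAPS-supergroup, translating everything through Harish-Chandra pairs. Write $\mathbb{L}=\mathbb{G}/\mathbb{H}$, so that $L:=\mathbb{L}_{ev}=G/H_{ev}$ and $\mathfrak{L}_{\bar 1}=\mathfrak{G}_{\bar 1}/\mathfrak{H}_{\bar 1}$, and recall that the normal supersubgroups of $\mathbb{L}$ are exactly the $\mathbb{R}/\mathbb{H}$ with $\mathbb{H}\leq\mathbb{R}\triangleleft\mathbb{G}$. Two facts will be used throughout. First, a proper normal supersubgroup of a SAPS-supergroup is finite and a smooth connected one must be $\mathbb{G}$ or $e$; applied to $(G,(G-1)\mathfrak{G}_{\bar 1})$ and to $(e,\mathfrak{W})$ (for $\mathfrak{W}$ a $G$-submodule of $\mathfrak{G}_{\bar 1}$ with $[\mathfrak{G}_{\bar 1},\mathfrak{W}]=0$), both of which represent normal supersubgroups, this shows that a SAPS-supergroup automatically satisfies conditions $(1)$ and $(2)$ of Lemma \ref{a very particular case}. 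Second, a supergroup is solvable iff its largest even supersubgroup is; in particular $\mathbb{G}$ is non-solvable iff $G$ is.

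For the forward implication, smoothness and connectedness of $\mathbb{L}$ are immediate. If $\mathbb{L}$ were commutative then $L$ would be, so $\mathcal{D}(G)\subseteq H_{ev}$; since $\mathcal{D}(G)$ is smooth connected it would then be trivial, hence $G$ abelian and $\mathbb{G}$ solvable, contrary to hypothesis — so $\mathbb{L}$ is noncommutative. A proper normal supersubgroup of $\mathbb{L}$ is $\mathbb{R}/\mathbb{H}$ with $\mathbb{R}\triangleleft\mathbb{G}$, $\mathbb{R}\neq\mathbb{G}$; then $\mathbb{R}$ is finite (as $\mathbb{G}$ is SAPS) and so is $\mathbb{R}/\mathbb{H}$.

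It remains to rule out a proper nontrivial smooth connected normal supersubgroup $\mathbb{N}$ of $\mathbb{L}$; this is the crux. Such $\mathbb{N}$ is finite by the previous step, hence purely odd by Lemma \ref{Lie superalgebra is trivial}, so $\mathbb{N}=(e,\mathfrak{N})$ with $0\neq\mathfrak{N}\subseteq\mathfrak{L}_{\bar 1}$ an $L$-submodule satisfying $[\mathfrak{L}_{\bar 1},\mathfrak{N}]=0$ and $L$ acting trivially on $\mathfrak{L}_{\bar 1}/\mathfrak{N}$. Lifting $\mathfrak{N}$ to a $G$-submodule $\mathfrak{R}\supseteq\mathfrak{H}_{\bar 1}$ of $\mathfrak{G}_{\bar 1}$ gives $[\mathfrak{G}_{\bar 1},\mathfrak{R}]\subseteq\mathrm{Lie}(H_{ev})$ and $G$ acting trivially on $\mathfrak{G}_{\bar 1}/\mathfrak{R}$; the latter together with condition $(1)$ for $\mathbb{G}$ forces $\mathfrak{R}=\mathfrak{G}_{\bar 1}$, whence $\mathfrak{N}=\mathfrak{L}_{\bar 1}$ and $[\mathfrak{G}_{\bar 1},\mathfrak{G}_{\bar 1}]\subseteq\mathrm{Lie}(H_{ev})$. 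The main obstacle is to exclude this last inclusion when $\mathfrak{G}_{\bar 1}\neq0$: if $H_{ev}$ is \'etale (always if $\mathrm{char}\,\Bbbk=0$) it yields $[\mathfrak{G}_{\bar 1},\mathfrak{G}_{\bar 1}]=0$, contradicting condition $(2)$ for $\mathbb{G}$; in general one must exploit the finiteness of $\mathbb{H}$ and the non-solvability of $\mathbb{G}$ — noting that, under this inclusion, $(H_{ev},\mathfrak{G}_{\bar 1})$ is itself a finite normal supersubgroup of $\mathbb{G}$ containing $\mathbb{H}$ — to derive a contradiction. I expect this to be the technically delicate point.

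For the converse, the implication ``$\mathbb{G}$ SAPS $\Rightarrow$ condition $(1)$'' is the first fact above. For the other direction assume $\mathbb{L}=\mathbb{G}/\mathbb{H}$ is SAPS and $\mathbb{G}$ satisfies $(1)$, and check the SAPS conditions for $\mathbb{G}$. Noncommutativity and smoothness/connectedness follow as before, using that $(1)$ makes the $G$-action on $\mathfrak{G}_{\bar 1}$ nontrivial when $\mathfrak{G}_{\bar 1}\neq0$. If $\mathbb{R}\triangleleft\mathbb{G}$ is proper, then $\mathbb{R}\mathbb{H}/\mathbb{H}\triangleleft\mathbb{L}$ cannot be all of $\mathbb{L}$: otherwise $\mathbb{R}\mathbb{H}=\mathbb{G}$, so $\mathbb{G}/\mathbb{R}\cong\mathbb{H}/(\mathbb{R}\cap\mathbb{H})$ is finite, which forces $R_{ev}=G$ and then, by $(1)$ and the normality criterion, $\mathfrak{R}_{\bar 1}=\mathfrak{G}_{\bar 1}$, i.e. $\mathbb{R}=\mathbb{G}$, a contradiction; hence $\mathbb{R}\mathbb{H}/\mathbb{H}$ is a proper normal supersubgroup of the SAPS-supergroup $\mathbb{L}$, therefore finite, and $\mathbb{R}\leq\mathbb{R}\mathbb{H}$ is finite. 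Finally $\mathbb{G}$ also satisfies condition $(2)$ — this follows from $(1)$ together with $\mathbb{L}$ being WAPS, by the argument underlying the SAPS clause of Lemma \ref{a very particular case} — so Lemma \ref{a converse} shows $\mathbb{G}$ is WAPS, and, combined with the finiteness of proper normal supersubgroups just proved, $\mathbb{G}$ is SAPS.
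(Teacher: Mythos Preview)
You are reading more into the definition of SAPS than the paper does. In the paper's operational usage --- made explicit by the second clause of Lemma~\ref{a very particular case} and by the paper's own proof of the present lemma --- a SAPS-supergroup is a smooth connected noncommutative supergroup in which every proper normal supersubgroup is finite; the WAPS condition on smooth connected normal supersubgroups is \emph{not} separately imposed. For instance, if $G$ is SAS and $\mathfrak{G}_{\bar 1}$ is a nontrivial irreducible $G$-module with $[\mathfrak{G}_{\bar 1},\mathfrak{G}_{\bar 1}]=0$, then $(G,\mathfrak{G}_{\bar 1})$ satisfies condition~(1) and is SAPS by Lemma~\ref{a very particular case}, yet it visibly has the nontrivial purely odd smooth connected normal supersubgroup $(e,\mathfrak{G}_{\bar 1})$. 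In particular, your opening claim that a SAPS-supergroup automatically satisfies condition~(2) is false under the paper's convention.

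Once this is corrected, your ``crux'' paragraph in the forward direction evaporates: you have already shown that every proper normal supersubgroup of $\mathbb{L}$ is finite and that $\mathbb{L}$ is noncommutative (via $\mathcal{D}(G)$ smooth connected inside the finite $H$, hence $G$ abelian, hence $\mathbb{G}$ solvable), which is all that is required. Likewise, the closing sentence of your converse --- checking condition~(2) and invoking Lemma~\ref{a converse} to obtain WAPS --- is superfluous and, as you half-acknowledge, not properly justified. What remains of your argument is correct and matches the paper: for the converse one takes a proper normal $\mathbb{R}\unlhd\mathbb{G}$, notes that if $\mathbb{R}\mathbb{H}=\mathbb{G}$ then $G/R$ is finite connected with $G$ smooth, forcing $R=G$, whereupon the normality criterion together with condition~(1) gives $\mathfrak{R}_{\bar 1}=\mathfrak{G}_{\bar 1}$ and $\mathbb{R}=\mathbb{G}$; hence $\mathbb{R}\mathbb{H}/\mathbb{H}$ is proper in the SAPS-supergroup $\mathbb{L}$ and thus finite, so $\mathbb{R}$ is finite.
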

	
	\begin{proof}
		It is clear that any proper supersubgroup of $\mathbb{G}/\mathbb{H}$ is finite. So, all we need is to show that $\mathbb{G}/\mathbb{H}$ is noncommutative. Assume the contrary.
		Arguing as in Lemma \ref{when a quotient of SAS is SAS}, we conclude that $G$ is commutative. By \cite[Corollary 6.4]{maszub1} the supergroup $\mathbb{G}$ is solvable, a contradiction.
		
		If $\mathbb{G}/\mathbb{H}$ is SAPS and $\mathbb{R}$ is a proper infinite normal supersubgroup
		in $\mathbb{G}$, then the same arguments as in Lemma \ref{a converse} imply $R=G$, hence $G$ acts trivially on $\mathfrak{G}_{\bar 1}/\mathfrak{R}_{\bar 1}$. 
	\end{proof}
	
	\begin{pr}\label{SAPS in positive char}
		If $\mathrm{char}\Bbbk =p>0$ and $\mathbb{G}$ is not solvable, then $\mathbb{G}$ is SAPS if and only if $G$ is SAPS and $\mathbb{G}$ satisfies the condition $(1)$ from Lemma \ref{a very particular case}. Moreover, $\mathbb{G}$ is SAS
		if and only if $G$ is SAS and $\mathbb{G}$ satisfies the conditions $(1-2)$ from Lemma \ref{a very particular case}.
	\end{pr}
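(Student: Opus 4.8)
\noindent\emph{Plan of proof.} If $\mathbb{G}$ is purely even the statement is trivial, so assume $\mathfrak{G}_{\bar 1}\neq 0$. The ``if'' implication of the first assertion is exactly the last clause of Lemma~\ref{a very particular case} (recall that, $G$ being purely even, it is SAPS iff it is WAS); the ``if'' implication of the SAS variant adds to this only the fact that $(1)$--$(2)$ force semisimplicity, treated at the end. Two parts of the ``only if'' directions are immediate: the normality criterion shows $\bigl(G,(G-1)\mathfrak{G}_{\bar 1}\bigr)$ is a normal supersubgroup, and it is infinite because $\mathbb{G}$ not solvable forces $G\neq e$ (a finite smooth connected supergroup is purely odd by Lemma~\ref{Lie superalgebra is trivial}, hence abelian, hence solvable); since $\mathbb{G}$ is SAPS this supersubgroup must equal $\mathbb{G}$, that is $(G-1)\mathfrak{G}_{\bar 1}=\mathfrak{G}_{\bar 1}$, which is $(1)$. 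Also $G$ is noncommutative, since otherwise $\mathbb{G}$ would be solvable by \cite[Corollary~6.4]{maszub1}. It therefore remains to prove the core assertion: \emph{if $\mathbb{G}$ is SAPS and not solvable, then $G$ has no proper infinite closed normal subgroup} (equivalently, $G$ is SAPS).

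Here I would argue by contradiction, manufacturing from a hypothetical proper infinite closed normal subgroup a proper infinite normal supersubgroup of $\mathbb{G}$. So suppose $N\trianglelefteq G$ is closed, normal, proper and infinite; replacing $N$ by $(N_{red})^{0}$ I may assume it is smooth and connected. Now run the following closure. Put $N^{(0)}=N$ and $\mathfrak{V}^{(0)}=(N^{(0)}-1)\mathfrak{G}_{\bar 1}$, a $G$-submodule since $N^{(0)}\trianglelefteq G$; while $[\mathfrak{G}_{\bar 1},\mathfrak{V}^{(i)}]\not\subseteq\mathrm{Lie}(N^{(i)})$, replace $N^{(i)}$ by the smallest closed normal subgroup $N^{(i+1)}$ of $G$ containing $N^{(i)}$ with $\mathrm{Lie}(N^{(i+1)})\supseteq[\mathfrak{G}_{\bar 1},\mathfrak{V}^{(i)}]$ --- which exists because $\mathrm{Lie}$ is left exact, so this family is stable under intersection --- and set $\mathfrak{V}^{(i+1)}=(N^{(i+1)}-1)\mathfrak{G}_{\bar 1}$. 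By Noetherianity the chain stabilizes at a pair $(N^{*},\mathfrak{V}^{*})$, with $\mathfrak{V}^{*}=(N^{*}-1)\mathfrak{G}_{\bar 1}$ a $G$-submodule, $N^{*}\supseteq N$ (so infinite), and $[\mathfrak{G}_{\bar 1},\mathfrak{V}^{*}]\subseteq\mathrm{Lie}(N^{*})$ (hence also $[\mathfrak{V}^{*},\mathfrak{V}^{*}]\subseteq\mathrm{Lie}(N^{*})$, so $(N^{*},\mathfrak{V}^{*})$ represents a supersubgroup of $\mathbb{G}$). The normality criterion then shows this supersubgroup is normal: conditions $(1),(2)$ hold by construction, $(3)$ because $N^{*}$ acts trivially on $\mathfrak{G}_{\bar 1}/(N^{*}-1)\mathfrak{G}_{\bar 1}$, and $(4)$ is the stabilization condition. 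It is infinite; it is proper unless $N^{*}=G$, and $N^{*}=G$ would force $\mathfrak{V}^{*}=(G-1)\mathfrak{G}_{\bar 1}=\mathfrak{G}_{\bar 1}$ by $(1)$. Thus a contradiction is reached \emph{unless} some $N^{(i)}$ in the chain is a \emph{proper} closed normal subgroup $M$ with $(M-1)\mathfrak{G}_{\bar 1}=\mathfrak{G}_{\bar 1}$ and $[\mathfrak{G}_{\bar 1},\mathfrak{G}_{\bar 1}]\not\subseteq\mathrm{Lie}(M)$.

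Excluding this last configuration is, I expect, the main obstacle. The split case $[\mathfrak{G}_{\bar 1},\mathfrak{G}_{\bar 1}]=0$ is immediate --- then for any proper infinite normal $N$ the pair $(N,\mathfrak{G}_{\bar 1})$ is itself a proper infinite normal supersubgroup --- so assume $[\mathfrak{G}_{\bar 1},\mathfrak{G}_{\bar 1}]\neq 0$. Two further consequences of $\mathbb{G}$ being SAPS are available as leverage: the purely even normal supersubgroup $\bigl(\ker(G\to\mathrm{GL}(\mathfrak{G}_{\bar 1})),0\bigr)$ cannot be proper and infinite, so $G$ acts on $\mathfrak{G}_{\bar 1}$ with finite kernel; and $[\mathfrak{G}_{\bar 1},\mathfrak{G}_{\bar 1}]$ is a $G$-submodule of $\mathfrak{G}_{\bar 0}$ that is not contained in $\mathrm{Lie}(H)$ for any proper closed normal $H$ (otherwise $\bigl(H,(H-1)\mathfrak{G}_{\bar 1}\bigr)$ would be a proper infinite normal supersubgroup). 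I would then examine the structure of $G$: if $G$ is not almost-simple it admits a proper closed normal subgroup of one of the standard kinds --- the unipotent radical, a central subtorus, or the preimage of the product of some but not all of the almost-simple factors of $G/\mathrm{R}(G)$ --- and, playing these two facts against $(M-1)\mathfrak{G}_{\bar 1}=\mathfrak{G}_{\bar 1}$, one shows that at least one such subgroup $N'$ satisfies $[\mathfrak{G}_{\bar 1},(N'-1)\mathfrak{G}_{\bar 1}]\subseteq\mathrm{Lie}(N')$, whence $\bigl(N',(N'-1)\mathfrak{G}_{\bar 1}\bigr)$ is a proper infinite normal supersubgroup --- the desired contradiction. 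This settles the core assertion, so $G$ is SAPS (equivalently SAS).

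For the ``moreover'' clause, $\mathbb{G}$ is SAS iff it is SAPS and semisimple, and by the first part ``$\mathbb{G}$ SAPS'' is equivalent to ``$G$ SAS and $(1)$''; so it remains to match semisimplicity of a SAPS supergroup with condition $(2)$. Indeed, if $\mathbb{G}$ is SAPS and not solvable, $\mathrm{R}(\mathbb{G})$ is a proper smooth connected normal solvable supersubgroup, hence finite, hence purely odd by Lemma~\ref{Lie superalgebra is trivial}; its odd part is then a $G$-submodule $\mathfrak{W}$ with $[\mathfrak{G}_{\bar 1},\mathfrak{W}]\subseteq\mathrm{Lie}(\mathrm{R}(\mathbb{G})_{ev})=0$, so $(2)$ holds iff $\mathrm{R}(\mathbb{G})=e$; conversely, if $(2)$ fails, the offending nonzero $G$-submodule $\mathfrak{W}$ with $[\mathfrak{G}_{\bar 1},\mathfrak{W}]=0$ gives a nontrivial purely odd, hence smooth connected solvable, normal supersubgroup $(e,\mathfrak{W})$, so $\mathbb{G}$ is not semisimple. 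Combining, $\mathbb{G}$ is SAS if and only if $G$ is SAS and conditions $(1)$ and $(2)$ hold.
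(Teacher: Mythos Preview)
Your treatment of the easy pieces is fine: the derivation of condition~(1) from SAPS via the normal supersubgroup $(G,(G-1)\mathfrak{G}_{\bar 1})$, the noncommutativity of $G$ via \cite[Corollary~6.4]{maszub1}, the ``if'' direction from Lemma~\ref{a very particular case}, and the ``moreover'' clause matching semisimplicity with condition~(2) all agree with the paper.

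The gap is in the core assertion that $G$ is SAPS. Your closure process $N^{(i)}\mapsto N^{(i+1)}$ is well-defined, but your description of the obstruction is not: when the chain first hits $G$, say $N^{(i)}$ proper and $N^{(i+1)}=G$, all you know is that no proper closed normal subgroup containing $N^{(i)}$ has Lie algebra containing $[\mathfrak{G}_{\bar 1},(N^{(i)}-1)\mathfrak{G}_{\bar 1}]$. There is no reason why $(N^{(i)}-1)\mathfrak{G}_{\bar 1}$ should equal all of $\mathfrak{G}_{\bar 1}$, so the ``bad case'' you isolate (a proper $M$ with $(M-1)\mathfrak{G}_{\bar 1}=\mathfrak{G}_{\bar 1}$) is not the one that actually arises. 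More seriously, the exclusion argument you sketch --- dissecting $G$ via unipotent radical, central torus, and almost-simple factors, and claiming one of these pieces $N'$ satisfies $[\mathfrak{G}_{\bar 1},(N'-1)\mathfrak{G}_{\bar 1}]\subseteq\mathrm{Lie}(N')$ --- is not carried out and is not obviously true; you would need a genuine structural argument relating the $G$-module $\mathfrak{G}_{\bar 1}$ to the normal subgroup lattice of $G$, and no such argument is on the table.

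The paper sidesteps this entirely with a Frobenius-kernel trick, which is where the hypothesis $\mathrm{char}\,\Bbbk=p>0$ is really used. The first Frobenius kernel $\mathbb{G}_1$ is a \emph{finite} normal supersubgroup, and since $G$ is smooth one has $\mathbb{G}/\mathbb{G}_1\simeq G/G_1\simeq G^{(1)}$, the Frobenius twist of $G$. By Lemma~\ref{if quotient is SAPS}, $G^{(1)}$ is SAPS. But $\Bbbk[G^{(1)}]$ coincides with $\Bbbk[G]$ as a Hopf ring (only the $\Bbbk$-linear structure is twisted), so the lattices of closed normal subgroups and of finite subgroups of $G$ and $G^{(1)}$ are identical; hence $G$ itself is SAPS. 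The same device (Frobenius twist preserves solvability and semisimplicity, since these are read off from Hopf-algebraic properties) handles the SAS part. This is the missing idea in your argument.
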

	\begin{proof}
		Assume that $\mathbb{G}$ is a SAPS-supergroup and $\mathbb{G}$ is not solvable. Recall that $\mathbb{G}_r$ denotes the $r$-th Frobenius kernel of $\mathbb{G}$ (cf. \cite{jan, zub1}). Since $\mathbb{G}_1$ is finite and $G$ is smooth, $\mathbb{G}/\mathbb{G}_1\simeq G/G_1\simeq G^{(1)}$ is a SAPS-group by Lemma \ref{if quotient is SAPS}. On the other hand, $\Bbbk[G^{(1)}]=\Bbbk[G]^{(1)}$ coincides with $\Bbbk[G]$ as a Hopf ring, but its
		$\Bbbk$-space structure is given by $a\cdot f=a^{\frac{1}{p}} f, a\in\Bbbk, f\in \Bbbk[G]$. In particular, any \emph{conormal} or \emph{cofinite} Hopf ideal in $\Bbbk[G]$ remains the same in its Frobenius twist $\Bbbk[G]^{(1)}$, and vice versa. In other words, if $H$ is a proper normal subgroup in $G$, then $H^{(1)}$ is the same in $G^{(1)}$, that implies $H$ is finite and $G$ is SAPS. Lemma \ref{a very particular case} infers the first statement.
		
		As above, one easily sees that a smooth connected algebraic group $R$ is solvable or semisimple if and only if $R^{(1)}$ is. Indeed, 
		$\Bbbk[R/\mathcal{D}(R)]$ is isomorphic to the largest cocommutative Hopf subalgebra of $\Bbbk[R]$. Since this property is invariant under the transition to $\Bbbk[R]^{(1)}$,
		we have $(R/\mathcal{D}(R))^{(1)}\simeq R^{(1)}/\mathcal{D}(R^{(1)})$. Using \cite[Corollary 6.19(b)]{milne}, we derive both statements.
		
		Assume that $\mathbb{G}$ is SAS.
		Let $\mathbb{H}$ be the preimage of $\mathrm{R}(G^{(1)})\neq e$ in $\mathbb{G}$.  Since
		$\mathrm{R}(G^{(1)})$ is infinite,  $\mathbb{H}$ is infinite too, that in turn infers $\mathbb{H}=\mathbb{G}$. Thus $\mathrm{R}(G^{(1)})\simeq \mathbb{H}/\mathbb{G}_1\simeq G^{(1)}$ is solvable, hence $G$ is. By \cite[Corollary 6.4]{maszub1} the supergroup $\mathbb{G}$ is solvable as well, which is a contradiction. It remains to note that if the codition $(2)$ from Lemma \ref{a very particular case} is violated, then $\mathbb{G}$ contains
		a normal purely odd supersubgroup, which is smooth, connected and abelian. 
		
		Conversely, let $\mathbb{G}$ be a SAS-supergroup of first type. Let $\mathbb{H}=\mathrm{R}(\mathbb{G})$. Then $H\leq \mathrm{R}(G)=e$, hence $[\mathfrak{G}_{\bar 1}, \mathfrak{H}_{\bar 1}]=0$ implies
		$\mathfrak{H}_{\bar 1}=0$.
	\end{proof}
	
	\subsection{Projective special linear supergroups}
	The following lemma is folklore, however, we found it important to prove it.
	\begin{lm}\label{when sl is simple}
		If $\mathrm{char}\Bbbk =p>0$ and $m+n\geq 3$, then the Lie superalgebra $\mathfrak{sl}(m|n)$ is simple if and only if $p$ does not divide $m-n$. If $p$  divides $m-n$, then $\mathfrak{sl}(m|n)$ contains the unique proper
		central superideal $\Bbbk I_{m+n}$, so that the Lie superalgebra $\mathfrak{psl}(m|n)=\mathfrak{sl}(m|n)/\Bbbk I_{m+n}$ is simple.
	\end{lm}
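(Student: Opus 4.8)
The plan is to realize $\mathfrak{sl}(m|n)$ as the kernel of the supertrace inside $\mathfrak{gl}(m|n)$ and to treat the two directions separately. Since $\mathrm{str}(I_{m+n})=m-n$ and $I_{m+n}$ is central in $\mathfrak{gl}(m|n)$, when $p\mid m-n$ the line $\Bbbk I_{m+n}$ is a proper nonzero (automatically central) superideal, so $\mathfrak{sl}(m|n)$ is not simple; when $p\nmid m-n$ one checks directly that $\mathfrak{sl}(m|n)$ is centerless. The substance is therefore the simplicity of $\mathfrak{g}:=\mathfrak{sl}(m|n)$ when $p\nmid m-n$ and of $\mathfrak{g}:=\mathfrak{psl}(m|n)$ when $p\mid m-n$; I will run one argument, writing $\mathfrak{h}$ for the (image of the) Cartan subalgebra of diagonal matrices, $E_{ij}$ ($i\neq j$) for the off-diagonal generators, and $\alpha_{ij}=\epsilon_{i}-\epsilon_{j}$ for the root carried by $E_{ij}$.

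The argument hinges on two facts about $\mathfrak{h}$. First, since $m+n\geq 3$ no $\alpha_{ij}$ vanishes on the diagonal Cartan, so $\mathfrak{h}$ acts semisimply on $\mathfrak{g}$ with zero-weight space exactly $\mathfrak{h}$ and the nonzero weight spaces spanned by ``packets'' $\{E_{ij}:\alpha_{ij}=\alpha\}$; such a packet can have size bigger than one when $p\mid m+n$, and that is the only genuine complication. Second, the roots span $\mathfrak{h}^{*}$ --- equivalently, the common kernel of all the $\alpha_{ij}$ on the diagonal Cartan of $\mathfrak{sl}(m|n)$ equals $\Bbbk I_{m+n}$ when $p\mid m-n$ and $0$ otherwise; this is a short computation comparing the zero-sum hyperplane with the supertrace covector, carried out in each admissible combination of the divisibilities $p\mid m+n$ and $p\mid m-n$. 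The same observation also yields that $\mathfrak{psl}(m|n)$ is centerless.

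Now let $I\neq 0$ be a superideal of $\mathfrak{g}$; being $\mathfrak{h}$-stable it splits as $(I\cap\mathfrak{h})\oplus\bigoplus_{\alpha}(I\cap\mathfrak{g}_{\alpha})$. I first show $I\cap\mathfrak{h}\neq 0$: if $I\subseteq\mathfrak{h}$ then $[h,E_{ij}]=\alpha_{ij}(h)E_{ij}\in I\subseteq\mathfrak{h}$ forces each $h\in I$ into the common kernel of the roots, hence $h=0$; otherwise choose $0\neq x=\sum c_{ij}E_{ij}\in I\cap\mathfrak{g}_{\alpha}$ with $c_{i_{0}j_{0}}\neq 0$ and bracket with $E_{j_{0}i_{0}}$ --- a case check shows the diagonal component of $[E_{j_{0}i_{0}},x]$ is exactly $c_{i_{0}j_{0}}(E_{j_{0}j_{0}}\pm E_{i_{0}i_{0}})\neq 0$ (still nonzero after passing to $\mathfrak{psl}$ because $m+n\geq 3$), and projecting back into $I$ produces a nonzero element of $I\cap\mathfrak{h}$. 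Next, given $0\neq h\in I\cap\mathfrak{h}$, some $\alpha_{i_{0}j_{0}}(h)\neq 0$ because the roots span $\mathfrak{h}^{*}$, whence $E_{i_{0}j_{0}}\in I$. Finally, from a single root vector $E_{i_{0}j_{0}}\in I$ the identities $[E_{i_{0}j_{0}},E_{j_{0}k}]=E_{i_{0}k}$, $[E_{ai_{0}},E_{i_{0}b}]=E_{ab}$ and $[E_{ab},E_{bi_{0}}]=E_{ai_{0}}$ generate all $E_{ab}$ with $a\neq b$ --- the last step requires an index distinct from $a$ and $i_{0}$, which is where $m+n\geq 3$ enters --- and then $[E_{ab},E_{ba}]=E_{aa}\pm E_{bb}$ sweeps out a spanning set of $\mathfrak{h}$, so $I=\mathfrak{g}$. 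Since $[\mathfrak{g},\mathfrak{g}]=\mathfrak{g}$ by the same identities, $\mathfrak{g}$ is simple; and as $\mathfrak{sl}(m|n)$ is perfect, the simplicity of $\mathfrak{psl}(m|n)$ forces every proper superideal of $\mathfrak{sl}(m|n)$ into $\Bbbk I_{m+n}$, which gives the uniqueness clause.

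The step I expect to demand the most care is the packet bookkeeping when $p\mid m+n$: one must verify, in each divisibility case, that the roots still span $\mathfrak{h}^{*}$ (equivalently that the common kernel of the roots on the diagonal Cartan of $\mathfrak{sl}(m|n)$ is precisely the center) and that the diagonal component extracted from $[E_{j_{0}i_{0}},x]$ is not accidentally killed on passing to $\mathfrak{psl}(m|n)$. The remaining steps are the usual $\mathfrak{sl}$-style manipulations of elementary matrices and are valid here because $p=\mathrm{char}\,\Bbbk$ is odd.
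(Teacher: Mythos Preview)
Your argument is correct and, at bottom, uses the same elementary matrix-unit manipulations as the paper: reduce to showing that a nonzero superideal contains some $E_{i_0j_0}$, then bootstrap to all $E_{ab}$ with $a\neq b$, and finally pick up the diagonal via $[E_{ab},E_{ba}]$. The organizational difference is that the paper bypasses the weight decomposition entirely: it takes an arbitrary $z\in\mathfrak{I}$ with an off-diagonal coefficient $a_{ij}\neq 0$, brackets repeatedly with $E_{ji}$ (and, in the odd case, with an auxiliary $E_{si}$) to isolate $E_{ji}\in\mathfrak{I}$ directly, and only at the end observes that a purely diagonal $\mathfrak{I}$ must lie in $\Bbbk I_{m+n}$. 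Your route instead passes through $I\cap\mathfrak{h}$: first project to a weight component, bracket once with $E_{j_0i_0}$ to land in $\mathfrak{h}$, and then use that the roots span $\mathfrak{h}^*$ to recover an $E_{i_0j_0}$. Your version buys a cleaner, case-free extraction of a root vector at the price of having to control the root-space ``packets''; the paper's version avoids the packet bookkeeping altogether at the price of the parity case split.

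One small inaccuracy: packets of size $>1$ do not occur only when $p\mid m+n$. For instance in $\mathfrak{sl}(2|2)$ one has $\alpha_{13}=\alpha_{42}$ on the supertrace-zero Cartan for every odd $p$. This does not damage your proof, because your actual computation of $[E_{j_0i_0},x]$ only uses that two distinct pairs $(i,j),(i_0,j_0)$ in the same packet cannot share an index (which follows from $m+n\ge 3$), and hence all cross terms vanish; the diagonal output $c_{i_0j_0}(E_{j_0j_0}\pm E_{i_0i_0})$ is then nonzero even modulo $\Bbbk I_{m+n}$. Just correct the side remark about when non-trivial packets arise.
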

	\begin{proof}
		Let $\mathfrak{I}$ be a non-zero superideal in $\mathfrak{sl}(m|n)$. We show that either $\mathfrak{I}=\mathfrak{sl}(m|n)$, or $\mathfrak{I}$ is contained in the subspace of diagonal matrices. 
		
		So, let $z=\sum_{1\leq k, l\leq m+n}a_{kl}E_{kl}\in\mathfrak{I}$ and $a_{ij}\neq 0$ for some $i\neq j$. We have 
		\[
		\begin{array}{cll}z_1=[z, E_{ji}] & = &
			(a_{jj}-a_{ii})E_{ji}+a_{ij}(E_{ii}-(-1)^{|i|+|j|}E_{jj})\\[2mm]
			&& \displaystyle +\sum_{k\neq i, j}a_{kj}E_{ki}-\sum_{l\neq i, j} (-1)^{(|i|+|j|)(|i|+|l|)}a_{il}E_{jl}.
		\end{array}
		\]
		If $|i|+|j|={\bar 0} \pmod 2$, then $[z_1, E_{ji}]=-2a_{ij}E_{ji}\in\mathfrak{I}$, hence $E_{ji}\in\mathfrak{I}$. 
		Otherwise, if $|i|+|j|={\bar 1}  \pmod 2$, then choose $s\neq i, j$ so that  $z_2=[z_1, E_{si}]=-a_{ij}E_{si}-(-1)^{(|i|+|j|)(|i|+|s|)}a_{is}E_{ji}\in\mathfrak{I}$, and
		$[E_{ii}+E_{jj}, z_2]=a_{ij}E_{si}$. It follows that  $E_{si}\in\mathfrak{I}$. Thus, $\pm [E_{si}, E_{js}]=E_{ji}\in\mathfrak{I}$ as well. To sum it all up, if $z$ contains a nonzero term
		$a_{ij} E_{ij}$ with $i\neq j$, then $E_{ji}$ belongs to $\mathfrak{I}$, and by the same reasoning, $E_{ij}$ does as well.
		
		Now, let $E_{kl}\in\mathfrak{I}$, such that $k\neq l$. We claim that arbitrary $E_{uv}$, for $u\neq v$, belongs to $\mathfrak{I}$. Indeed, if $\{k, l\}\cap\{u, v\}=\emptyset$, then
		$[[E_{vk}, E_{kl}], E_{lu}]=E_{vu}\in\mathfrak{I}$. Otherwise, and without loss of generality, we can assume that $k=u$, for $l\neq v$. In this case, $[E_{kl}, E_{lv}]=E_{kv}\in\mathfrak{I}$.
		Finally, if $\mathfrak{I}$ contains all $E_{kl}$, for $k\neq l$, then it must contain all $[E_{kl}, E_{lk}]=E_{kk}-(-1)^{|k|+|l|}E_{ll}$, hence $\mathfrak{I}=\mathfrak{sl}(m|n)$. 
		
		It remains to consider the case where $\mathfrak{I}$ is contained in the subspace of diagonal matrices. If $z=\sum_{1\leq k\leq m+n}a_k E_{kk}$, then for any couple of indices
		$i\neq j$, we have $[z, E_{ij}]=(a_{i}-a_{j})E_{ij}\in\mathfrak{I}$, hence $a_{i}=a_{j}$. The lemma is proved. 
	\end{proof}
	\begin{rem}\label{m=n=1}
		If $m=n=1$, then the Lie superalgebra $\mathfrak{sl}(1|1)$ is obviously solvable. Note also that $[\mathfrak{gl}(m|n), \mathfrak{gl}(m|n)]=\mathfrak{sl}(m|n)$ for arbitrary $m, n$.   
	\end{rem}
	\begin{lm}\label{some exact sequence}
		Let \[1\to R\to G\to H\to 1\] be an exact sequence of algebraic groups. If $R$ and $G$ are smooth, then the induced sequence of Lie algebras
		\[0\to\mathrm{Lie}(R) \to \mathrm{Lie}(G)\to \mathrm{Lie}(H)\to 0\]	
		is exact too. The similar statement is valid for supergroups.
	\end{lm}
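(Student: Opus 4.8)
The plan is to prove the purely even statement first and then reduce the super case to it via the Harish--Chandra pair correspondence recalled above. For algebraic groups, write $\pi\colon G\to H$ for the quotient map, so that $R=\ker\pi$ and $\pi$ is faithfully flat (this is part of what ``exact sequence of affine algebraic groups'' means). The functor $\mathrm{Lie}(-)$ is left exact: it is the kernel of the reduction $G(\Bbbk[\epsilon])\to G(\Bbbk)$ with $\epsilon^2=0$, a homomorphism with trivial kernel induces an injection on Lie algebras, and kernels of homomorphisms are plainly preserved. Hence $0\to\mathrm{Lie}(R)\to\mathrm{Lie}(G)\to\mathrm{Lie}(H)$ is exact, and the whole content of the lemma is the surjectivity of $\mathrm{d}_e\pi\colon\mathrm{Lie}(G)\to\mathrm{Lie}(H)$.

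For that, I would first note that $\pi$ is a \emph{smooth} morphism: it is flat, and each of its geometric fibres is a torsor under $R$, hence smooth because $R$ is, so $\pi$ is a flat morphism with smooth fibres. A smooth morphism satisfies the infinitesimal lifting criterion relative to the square-zero surjection $\Bbbk[\epsilon]\to\Bbbk$: given $h\in\mathrm{Lie}(H)\subseteq H(\Bbbk[\epsilon])$ together with the point $e\in G(\Bbbk)$ lying over the reduction $\bar e$ of $h$, there is $g\in G(\Bbbk[\epsilon])$ with $\pi(g)=h$ and $g$ reducing to $e$; then $g\in\mathrm{Lie}(G)$ and $\mathrm{d}_e\pi(g)=h$, proving surjectivity. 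Alternatively one may count dimensions: faithful flatness of $\pi$ and regularity of $G$ force $H$ to be smooth, so $\dim\mathrm{Lie}(X)=\dim X$ for $X\in\{R,G,H\}$, and then $\dim G=\dim R+\dim H$ combined with left exactness gives the result.

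For supergroups, let $1\to\mathbb{R}\to\mathbb{G}\to\mathbb{H}\to 1$ be exact, so that $\mathbb{H}\simeq\mathbb{G}/\mathbb{R}$. By the description of quotients recalled above, the Harish--Chandra pair of $\mathbb{H}$ is $(G/R,\ \mathfrak{G}_{\bar 1}/\mathfrak{R}_{\bar 1})$; in particular $\mathfrak{H}_{\bar 1}=\mathfrak{G}_{\bar 1}/\mathfrak{R}_{\bar 1}$, so $0\to\mathfrak{R}_{\bar 1}\to\mathfrak{G}_{\bar 1}\to\mathfrak{H}_{\bar 1}\to 0$ is exact. On the even parts, $R=\mathbb{R}_{ev}$ and $G=\mathbb{G}_{ev}$ are smooth by hypothesis and $1\to R\to G\to G/R\to 1$ is exact, so the even case just proved gives that $0\to\mathrm{Lie}(R)\to\mathrm{Lie}(G)\to\mathrm{Lie}(H)\to 0$ is exact. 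Since $\mathrm{Lie}(\mathbb{G})=\mathrm{Lie}(G)\oplus\mathfrak{G}_{\bar 1}$ as a $\mathbb{Z}/2$-graded space and all the maps in question are even, assembling the two rows yields the exact sequence of Lie superalgebras.

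The main obstacle is the surjectivity on the even part; the rest is bookkeeping. The key point is that $\pi$ is a smooth \emph{morphism}, and this is precisely where smoothness of $R$ (and, in the dimension-counting variant, of $G$) enters: for non-smooth $R$, such as an infinitesimal group scheme in characteristic $p$, the induced map on Lie algebras need not be surjective. One should also record that the convention for exact sequences of algebraic (super)groups includes faithful flatness of the surjection, which holds automatically for quotient maps over a field.
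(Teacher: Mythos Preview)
Your proof is correct. Your ``alternative'' dimension-counting argument is exactly the paper's proof: the paper observes left exactness, notes $H$ is smooth (citing \cite[Proposition~1.62]{milne}), and then combines $\dim S=\dim\mathrm{Lie}(S)$ for smooth $S$ with $\dim G=\dim R+\dim H$ to force surjectivity. For supergroups the paper likewise just says the statement is obvious once reformulated in terms of Harish--Chandra pairs, which is your reduction.

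Your primary route through formal smoothness of $\pi$ is a genuinely different (and arguably cleaner) argument for the same surjectivity: instead of invoking smoothness of $H$ and comparing dimensions, you use that a flat morphism with smooth fibres is smooth, hence satisfies the infinitesimal lifting criterion against $\Bbbk[\epsilon]\twoheadrightarrow\Bbbk$. This avoids appealing to smoothness of the quotient $H$ and makes the role of the hypothesis on $R$ more transparent; the dimension-counting version, on the other hand, is shorter and stays closer to the standard references used elsewhere in the paper. Either works, and since you already included the dimension argument there is nothing to add.
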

	\begin{proof}
		It is known that the above sequence is at least left exact. In addition, $H$ is smooth by \cite[Proposition 1.62]{milne}. It remains to combine the fact that an algebraic group $S$ is smooth if and only if $\dim S=\dim\mathrm{Lie}(S)$ (cf. \cite[Corollary 12.2]{water}) with $\dim G=\dim R+\dim H$ (cf. \cite[Proposition 5.23]{milne}).	
		The second statement is obvious being reformulated in terms of Harish-Chanfra pairs (cf. \cite[Theorem 12.11]{maszub}).
	\end{proof}
	
	We suppose that $m, n>0$ and $m+n\geq 3$, unless stated otherwise. The supergroup $\mathrm{SL}(m|n)$ contains the central (purely even) supersubgroup consisting of scalar matrices, that is isomorphic to $\mu_{m-n}$. Recall that $\mu_l$ is the group of roots of unity of order $l\geq 1$, that is $\mu_l(A)=\{a\in A_{\bar 0}\mid a^l=1\}, A\in\mathsf{SAlg}_{\Bbbk}$. Slightly abusing notations, let $\mu_0$ be the one-dimensional torus $\mathbb{G}_m\simeq\mathrm{G}_m$.
	
	The supergroup
	$\mathrm{PSL}(m|n)=\mathrm{SL}(m|n)/\mu_{m-n}$ is called a \emph{projective special linear} supergroup. Its even part can be described as follows.
	Let $\chi : \mathrm{GL}_m\times \mathrm{GL}_n\to \mathrm{G}_m$ be a character, determined by 
	\[(A, B)\mapsto \det(A)\det(B)^{-1}, \text{ where } (A, B)\in \mathrm{GL}_m\times \mathrm{GL}_n .\]
	Then $\mathrm{SL}(m|n)_{ev}=\ker\chi$ and $\mathrm{PSL}(m|n)_{ev}\simeq \ker\chi/\mu_{m-n}$. Furthermore, we have $\mathrm{SL}_m\times \mathrm{SL}_n\unlhd \mathrm{SL}(m|n)_{ev}$,
	so that $\mathrm{SL}(m|n)_{ev}/(\mathrm{SL}_m\times \mathrm{SL}_n)\simeq \mathrm{G}_m$. In particular, $\mathrm{SL}(m|n)$ is smooth and connected (use \cite[Proposition 1.62 and Proposition 8.1]{milne}), hence $\mathrm{PSL}(m|n)$ is too.
	
	We have the commutative diagram 
	\[\begin{array}{ccccccccc}
		1 & \to & D & \to & \mathrm{GL}(m|n) & \to & \mathrm{PGL}(m|n) & \to & 1 \\
		&     & \uparrow & & \uparrow     &     &  \uparrow      &     & \\
		1 & \to & \mu_{m-n} & \to & \mathrm{SL}(m|n) & \to & \mathrm{PSL}(m|n) & \to & 1 	
	\end{array}\]
	with exact rows, where $D\simeq \mathbb{G}_m$ is the center of $\mathrm{GL}(m|n)$ consisting of scalar  matrices. The vertical arrows are close embeddings by
	\cite[Lemma 10.2]{bz}. The supergroup $\mathrm{PGL}(m|n)$ is also smooth and connected, and we have
	$\dim\mathrm{PGL}(m|n)_{ev}=m^2+n^2-1$. Since $\dim\mu_{m-n}=0$ provided $m\neq n$, otherwise $\dim\mu_0=1$, we have $\dim\mathrm{PSL}(m|n)_{ev}=m^2+n^2-1$ provided $m\neq n$, otherwise $\dim\mathrm{PSL}(m|n)_{ev}=m^2+n^2-2$. By \cite[Theorem 12.4]{water},  $\mathrm{PGL}(m|n)\simeq \mathrm{PSL}(m|n)$ whenever $m\neq n$. In particular, in this case
	$\mathrm{Lie}(\mathrm{PSL}(m|n))\simeq\mathfrak{pgl}(m|n)$. 
	
	Recall that $\mu_{m-n}$ is smooth if and only if either $\mathrm{char}\Bbbk=0$, or
	$\mathrm{char}\Bbbk=p>0$ and $p\nmid (m-n)$ or $m=n$ (cf. \cite[Remark 12.5]{milne}). Moreover, if $\mu_{m-n}$ is smooth, then $\mathrm{Lie}(\mu_{m-n})=0$, except the case $m=n$, where $\mathrm{Lie}(\mu_0)=\Bbbk I_{m+n}$ . Lemma \ref{some exact sequence} infers that $\mathrm{Lie}(\mathrm{PSL}(m|n))\simeq\mathfrak{sl}(m|n)$
	in all the above cases, except the case $m=n$, where $\mathrm{Lie}(\mathrm{PSL}(m|n))\simeq\mathfrak{sl}(m|n)/\Bbbk I_{m+n}=\mathfrak{psl}(m|n)$. 
	
	If $\mu_{m-n}$ is not smooth, then $\mathrm{char}\Bbbk=p>0$, $p|(m-n)$ but $m\neq n$, and $\mathrm{Lie}(\mu_{m-n})=\Bbbk I_{m+n}$. Since $\dim\mu_{m-n}=0$, the (super)dimensions of smooth supergroups $\mathrm{PSL}(m|n)$ and $\mathrm{SL}(m|n)$ are the same, hence 
	\[\dim\mathrm{Lie}(\mathrm{PSL}(m|n))=\dim\mathfrak{sl}(m|n) =\dim\mathfrak{psl}(m|n)+1,\]
	which in turn implies that the short sequence
	\[0\to \Bbbk I_{m+n}\to \mathfrak{sl}(m|n)\to \mathrm{Lie}(\mathrm{PSL}(m|n))\simeq \mathfrak{pgl}(m|n)\]
	is not right exact. By Remark \ref{m=n=1}, the image of $\mathfrak{sl}(m|n)$ in $\mathfrak{pgl}(m|n)$ is a proper superideal of (super)codimension $1|0$, which is isomorphic to $\mathfrak{psl}(m|n)$. Moreover, we have  $[\mathfrak{pgl}(m|n), \mathfrak{pgl}(m|n)]=\mathfrak{psl}(m|n)$. 
	
	Above we note that $SL(m|n)_{ev}$ contains the normal subgroup $\mathrm{SL}_m\times \mathrm{SL}_n$ of codimension one. Moreover, $(\mathrm{SL}_m\times \mathrm{SL}_n)\cap\mu_{m-n}\simeq \mu_{\mathrm{gcd}(m, n)}$ and we conclude that $H=(\mathrm{SL}_m\times \mathrm{SL}_n)/\mu_{\mathrm{gcd}(m, n)}$ is a normal smooth connected subgroup of $\mathrm{PSL}(m|n)_{ev}$ of codimension one. Since $\mathrm{PSL}(m|n)_{ev}/H\simeq G_m$ is abelian, we have 
	\[\mathfrak{psl}(m|n)_{\bar 0}=[\mathfrak{pgl}(m|n)_{\bar 0}, \mathfrak{pgl}(m|n)_{\bar 0}]\subseteq \mathrm{Lie}(H)\neq \mathfrak{pgl}(m|n)_{\bar 0},\]
	so that $\mathrm{Lie}(H)=\mathfrak{psl}(m|n)_{\bar 0}$, and the \emph{derived} subgroup $\mathcal{D}(\mathrm{PSL}(m|n)_{ev})$ is contained in $H$. The latter infers that
	$\mathrm{Lie}(\mathcal{D}(\mathrm{PSL}(m|n)_{ev}))$ is a proper ideal, that contains $\mathfrak{psl}(m|n)_{\bar 0}$, hence $\mathrm{Lie}(\mathcal{D}(\mathrm{PSL}(m|n)_{ev}))=\mathfrak{psl}(m|n)_{\bar 0}$.
	By \cite[Corollary 6.19 and Proposition 10.15]{milne}, there is $H=\mathcal{D}(\mathrm{PSL}(m|n)_{ev})$ and $(H, \mathfrak{pgl}(m|n)_{\bar 1})=(H, \mathfrak{psl}(m|n)_{\bar 1})$ is a subpair
	of $(\mathrm{PSL}(m|n)_{ev}, \mathfrak{pgl}(m|n)_{\bar 1})$ that determines a normal supersubgroup $\mathbb{H}$ in $\mathrm{PSL}(m|n)$, such that $\mathrm{PSL}(m|n)/\mathbb{H}\simeq \mathbb{G}_m$. Repeating the above arguments, one immediately derives that $\mathbb{H}=\mathcal{D}(\mathrm{PSL}(m|n))$. Finally, we have $\mathfrak{H}=\mathfrak{psl}(m|n)$.

	\begin{cor}\label{WAS not equal to SAS}
		If $\mathrm{char}\Bbbk=p>0$, then the class of SAS-supergroups is properly contained in the class of WAS-supergroups. 	
	\end{cor}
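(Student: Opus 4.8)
By Lemma~\ref{SAS is WAS} every SAS-supergroup is a WAS-supergroup, so the inclusion holds and all that remains is to exhibit a single WAS-supergroup which, in characteristic $p>0$, is not SAS. I propose to take $\mathbb{G}=\mathrm{SL}(2|1)$ (which coincides with $\mathrm{PSL}(2|1)$, since $\mu_1$ is trivial). That $\mathbb{G}$ is WAS is immediate: it is smooth and connected, and since $p\nmid(m-n)=1$, Lemma~\ref{when sl is simple} shows that $\mathrm{Lie}(\mathbb{G})=\mathfrak{sl}(2|1)$ is simple, so Lemma~\ref{some simple case} applies.

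The point is that $\mathbb{G}$ fails to be SAS, and this is detected by its even part. For $(m,n)=(2,1)$ we have $G=\mathbb{G}_{ev}=\ker\chi\simeq\mathrm{GL}_2$ (via $A\mapsto(A,\det A)$). Since $G$ contains $\mathrm{SL}_2$ it is not solvable, hence $\mathbb{G}$ is not solvable; and since the one-dimensional torus of scalar matrices lies in $\mathrm{R}(G)$, the group $G$ is not semisimple, hence not almost-simple, hence not SAS. As $\mathbb{G}$ is not solvable, Proposition~\ref{SAPS in positive char} then forces $\mathbb{G}$ not to be SAS. Combined with the first paragraph, $\mathrm{SL}(2|1)$ is a WAS-supergroup that is not SAS, which proves the corollary.

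More generally, the same reasoning shows that $\mathrm{PSL}(m|n)$ is a WAS-supergroup which is not SAS whenever $p\nmid(m-n)$ (there $\mathrm{Lie}(\mathrm{PSL}(m|n))\simeq\mathfrak{sl}(m|n)$ is simple, while $\mathrm{PSL}(m|n)_{ev}$ surjects onto $\mathrm{PSL}(m|n)_{ev}/H\simeq\mathbb{G}_m$ with $H=(\mathrm{SL}_m\times\mathrm{SL}_n)/\mu_{\mathrm{gcd}(m,n)}$, hence is not semisimple), and likewise that $\mathcal{D}(\mathrm{PSL}(m|n))$ is WAS but not SAS whenever $p\mid(m-n)$ and $m,n\geq 2$ (its Lie superalgebra is the simple $\mathfrak{psl}(m|n)$, while its even part $(\mathrm{SL}_m\times\mathrm{SL}_n)/\mu_{\mathrm{gcd}(m,n)}$ is semisimple but not almost-simple, having two nontrivial almost-simple factors). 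There is no serious obstacle in any of this: the substantive input is already packaged in Lemma~\ref{some simple case} (yielding WAS) and in Proposition~\ref{SAPS in positive char} (obstructing SAS through the even component); the only point requiring a moment's care is the explicit identification of $\mathbb{G}_{ev}$ and the observation that it is not semisimple.
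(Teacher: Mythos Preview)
Your proof is correct and follows essentially the same route as the paper: exhibit a $\mathrm{PSL}(m|n)$-type supergroup, use the simplicity of its Lie superalgebra together with Lemma~\ref{some simple case} to get WAS, and use Proposition~\ref{SAPS in positive char} together with the failure of the even part to be almost-simple to rule out SAS. Your choice of the concrete case $(m,n)=(2,1)$, where $\mathbb{G}_{ev}\simeq\mathrm{GL}_2$ visibly has nontrivial radical, is a clean simplification of the paper's more general argument, which treats all $(m,n)$ and in the case $p\mid(m-n)$ passes to $\mathcal{D}(\mathrm{PSL}(m|n))$.
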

	\begin{proof}
		Let $\mathbb{G}=\mathrm{PSL}(m|n)$, if $p\nmid (m-n)$, otherwise $\mathbb{G}=\mathcal{D}(\mathrm{PSL}(m|n))$. Then its Lie superalgebra $\mathfrak{G}=\mathfrak{psl}(m|n)$ is simple. By Lemma \ref{some simple case} and Proposition \ref{SAPS in positive char}, $\mathbb{G}$ is WAS, but not SAS (even not SAPS).
		In fact, as it has been observed, $G$ contains a proper normal infinite subgroup $(\mathrm{SL}_m\times\mathrm{SL}_n)/\mu_{\mathrm{gcd}(m, n)}$, provided $p\nmid (m-n)$, otherwise
		$G=(\mathrm{SL}_m\times\mathrm{SL}_n)/\mu_{\mathrm{gcd}(m, n)}$. In the latter case, $G$ again contains proper normal infinite subgroups, say, the image of $\mathrm{SL}_m\times e$ in the factor-group $(\mathrm{SL}_m\times\mathrm{SL}_n)/\mu_{\mathrm{gcd}(m, n)}$.
	\end{proof}
	
	\subsection{More properties of SAS-supergroups}
	
	\begin{pr}\label{SAS in zero char}
		If $\mathrm{char}\Bbbk=0$, then $\mathbb{G}$ is SAS if and only if $\mathfrak{G}$ is simple.	
	\end{pr}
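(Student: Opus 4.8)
The plan is to transfer the question, via the Harish--Chandra correspondence, to the Lie superalgebra $\mathfrak{G}$. Since $\mathrm{char}\Bbbk=0$, every algebraic supergroup is smooth, and by Lemma \ref{SAS is WAS} the classes of SAS- and WAS-supergroups coincide; so it suffices to show that a smooth connected $\mathbb{G}$ is WAS if and only if $\mathfrak{G}$ is simple. The implication ``$\Leftarrow$'' is exactly Lemma \ref{some simple case}, so I concentrate on ``$\Rightarrow$'', arguing by contraposition: assuming $\mathfrak{G}$ is not simple, I produce a proper nontrivial smooth connected normal supersubgroup of $\mathbb{G}$ (or contradict its non-commutativity or semisimplicity), so that $\mathbb{G}$ fails to be WAS.

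So let $\mathbb{G}$ be WAS with $\mathfrak{G}$ not simple. First, $\mathfrak{G}$ is non-abelian: if $[\,\cdot\,,\,\cdot\,]=0$ on $\mathfrak{G}$ then $[\mathfrak{G}_{\bar 1},\mathfrak{G}_{\bar 1}]=0$ makes $\mathbb{G}$ split, $\mathbb{G}\simeq G\ltimes\mathbb{G}_{odd}$, and $\mathfrak{G}_{\bar 0}$ abelian acting trivially on $\mathfrak{G}_{\bar 1}$ forces $G$ abelian (characteristic zero) and acting trivially on $\mathbb{G}_{odd}$, i.e. $\mathbb{G}$ abelian, a contradiction. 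Next, $\mathfrak{G}$ is a semisimple Lie superalgebra: this follows from the characteristic-zero structure theory of algebraic supergroups (semisimplicity of $\mathbb{G}$ is equivalent to $\mathfrak{G}$ having no nonzero solvable superideal, and forces $G=\mathbb{G}_{ev}$ reductive; cf. \cite{grzub, sergan2}). Alternatively one argues directly that a nonzero solvable superideal of $\mathfrak{G}$ would produce a forbidden normal supersubgroup: a minimal such superideal $\mathfrak{I}$ is abelian, and either $\mathfrak{I}_{\bar 0}=0$, so that $(1,\mathfrak{I}_{\bar 1})$ represents, by the normality criterion, a nontrivial purely odd normal supersubgroup and hence $\mathbb{G}$ is abelian, a contradiction; or $\mathfrak{I}_{\bar 0}\neq 0$, whose $G$-stable connected algebraic hull in $G$, after intersection with the kernel of the action on $\mathfrak{G}_{\bar 1}$ and passage to the identity component, contradicts $\mathrm{R}(\mathbb{G})=e$ unless it is trivial, the borderline case being excluded because then $\mathfrak{I}_{\bar 0}$ sits in a central torus of $G$; the reductivity of $G$ is obtained similarly from $\mathrm{R}_u(G)$.

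Now choose a minimal nonzero superideal $\mathfrak{I}$ of $\mathfrak{G}$. By semisimplicity $\mathfrak{I}$ is not abelian, hence is a simple (in particular perfect) Lie superalgebra with $0\neq\mathfrak{I}\subsetneq\mathfrak{G}$, so $\mathfrak{I}_{\bar 0}$ is a reductive ideal of $\mathfrak{G}_{\bar 0}=\mathrm{Lie}(G)$ whose central toral part lies in $[\mathfrak{I}_{\bar 1},\mathfrak{I}_{\bar 1}]$; since the $G$-module $\mathfrak{G}_{\bar 1}$ is rational, this toral part is the Lie algebra of a subtorus, so $\mathfrak{I}_{\bar 0}$ is algebraic: $\mathfrak{I}_{\bar 0}=\mathrm{Lie}(I_0)$ for a connected normal subgroup $I_0$ of $G$. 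Since $\mathfrak{I}$ is a superideal, $\mathfrak{I}_{\bar 1}$ is a $G$-submodule of $\mathfrak{G}_{\bar 1}$ closed under the bracket, with $[\mathfrak{I}_{\bar 1},\mathfrak{I}_{\bar 1}]\subseteq\mathrm{Lie}(I_0)$, $[\mathfrak{G}_{\bar 1},\mathfrak{I}_{\bar 1}]\subseteq\mathfrak{I}_{\bar 0}=\mathrm{Lie}(I_0)$, and $G$ acting on $\mathfrak{G}_{\bar 1}/\mathfrak{I}_{\bar 1}$ through the induced $\mathrm{Lie}(G)$-action; hence $(I_0,\mathfrak{I}_{\bar 1})$ verifies the four conditions of the normality criterion and represents a normal supersubgroup $\mathbb{I}$ of $\mathbb{G}$, smooth and connected, with $\mathrm{Lie}(\mathbb{I})=\mathfrak{I}$. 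By Lemma \ref{Lie superalgebra is trivial} and \cite[Proposition 10.15]{milne}, $\mathfrak{I}\neq 0$ gives $\mathbb{I}\neq e$, and $\mathfrak{I}\neq\mathfrak{G}$ gives $\mathbb{I}\neq\mathbb{G}$ because their Harish--Chandra pairs differ; this contradicts $\mathbb{G}$ being WAS, so $\mathfrak{G}$ is simple after all.

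The hard part is the structure-theoretic input in the second paragraph: that in characteristic zero semisimplicity of $\mathbb{G}$ is reflected as semisimplicity of $\mathfrak{G}$ and forces $G$ reductive. Granting this, the remaining ingredients -- the integrality observation that makes $\mathfrak{I}_{\bar 0}$ algebraic, the verification of the normality criterion, and the bookkeeping of the extremal cases -- are routine.
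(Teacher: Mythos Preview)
Your approach is genuinely different from the paper's. Both start by invoking \cite{grzub} to get that $\mathfrak{G}$ is a semisimple Lie superalgebra and $G$ is reductive, but the paper then applies the sandwich-pair structure theorem \cite[Theorem 11.8]{grzub}: it embeds $\mathbb{G}$ (up to a finite kernel) between $\mathbb{U}$ and $\mathbb{H}$ with $\mathfrak{U}=\bigoplus\mathfrak{U}_i\otimes\Lambda(n_i)$, and then uses the WAS hypothesis to force $t=1$ and $n_1=0$. You instead try to integrate a minimal superideal $\mathfrak{I}$ directly to a connected normal supersubgroup via Harish--Chandra pairs.

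There are, however, two real gaps in your argument. First, the assertion that a minimal nonzero superideal of a semisimple Lie superalgebra is simple is false: take $\mathfrak{G}=\mathrm{Der}(\mathfrak{sl}_2\otimes\Lambda(1))$. This is semisimple, its unique proper nonzero superideal is $\mathfrak{sl}_2\otimes\Lambda(1)$, and that ideal is \emph{not} simple (it contains the abelian ideal $\mathfrak{sl}_2\otimes\xi$). Minimality only gives perfectness, via $[\mathfrak{I},\mathfrak{I}]\unlhd\mathfrak{G}$. Second, the step ``since the $G$-module $\mathfrak{G}_{\bar 1}$ is rational, this toral part is the Lie algebra of a subtorus'' is not justified: an arbitrary subspace of $\mathrm{Lie}(T)$ for a torus $T$ need not equal $\mathrm{Lie}(T')$ for any subtorus $T'$ (think of an irrational line in $\mathrm{Lie}(\mathbb{G}_m^2)$), and rationality of the $T$-weights on $\mathfrak{G}_{\bar 1}$ constrains eigenvalues of individual elements, not which subspace $\mathrm{Z}(\mathfrak{I}_{\bar 0})$ cuts out.

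The strategy can be rescued without these claims. Rather than insisting that $\mathfrak{I}_{\bar 0}$ itself integrate, take $H=\ker\bigl(G\to\mathrm{GL}(\mathfrak{G}_{\bar 1}/\mathfrak{I}_{\bar 1})\bigr)^0$, which is automatically an algebraic normal subgroup with $\mathrm{Lie}(H)\supseteq\mathfrak{I}_{\bar 0}$; then $(H,\mathfrak{I}_{\bar 1})$ satisfies the normality criterion and is proper whenever $\mathfrak{I}_{\bar 1}\neq\mathfrak{G}_{\bar 1}$. In the residual case where every proper nonzero ideal has full odd part, one checks (using perfectness of $\mathfrak{G}$, else $\mathcal{D}(\mathbb{G})$ already works) that $\mathfrak{I}_{\bar 0}$ must contain all of $\mathrm{Z}(\mathfrak{G}_{\bar 0})$ and hence is visibly algebraic. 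But as written, your proof is incomplete.
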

	\begin{proof}
		If $\mathbb{G}$ is a SAS-supergroup, then by \cite[Proposition 11.1]{grzub} the Lie superalgebra $\mathfrak{G}$ is semisimple. Furthermore,  by \cite[Theorem 11.8]{grzub}, $\mathbb{G}$ is inserted in a \emph{sandwich pair}. More precisely, there are simple Lie superalgebras $\mathfrak{U}_1, \ldots, \mathfrak{U}_t$, some nonnegative integers $n_1, \ldots, n_t$ and 
		connected supergroups $\mathbb{U}\unlhd\mathbb{H}$, such that \[\mathfrak{U}=\oplus_{1\leq i\leq t}\mathfrak{U}_i\otimes\Lambda(n_i)\simeq\mathrm{InDer}(\mathfrak{U})\subseteq \mathfrak{H}=\mathrm{Der}(\mathfrak{U})=\]
		\[(\oplus_{1\leq i\leq t}\mathrm{Der}(\mathfrak{U}_i)\otimes\Lambda(n_i))\oplus (\oplus_{1\leq i\leq t}\mathrm{id}_{\mathfrak{U}_i}\otimes\mathrm{Der}(\Lambda(n_i)).\] Moreover, there is a supergroup morphism $\mathbb{G}\to\mathbb{H}$, such that its kernel $\mathbb{R}$ is finite and $\mathfrak{U}\subseteq \mathfrak{G}\subseteq\mathfrak{H}$ (since $\mathfrak{R}=0$, we identify $\mathfrak{G}$ with its image in $\mathfrak{H}$). Without loss of generality, one can assume that $\mathbb{U}\leq\mathbb{G}\leq\mathbb{H}$. Since $\mathbb{U}\neq e$, we have $\mathbb{U}=\mathbb{G}$. If $n_i>1$ for some $1\leq i\leq t$, then the projection of $\mathfrak{U}$ to $\mathrm{id}\otimes\mathrm{Der}(\Lambda(n_i))$ is zero, which contradicts \cite[Theorem 6(a)]{kac}. Finally, if $\mathfrak{U}=\oplus_{1\leq i\leq t}\mathfrak{U}_i$ and $t>1$, then each $\mathfrak{U}_i$ is the Lie superalgebra of some proper connected normal supersubgroup of $\mathbb{G}$ by \cite[Lemma 11.3]{grzub}, which contradicts the fact that $\mathbb{G}$ is a SAS-supergroup. \end{proof}
	\begin{rem}\label{Lie algebra of SAS, an example}{\em
			If $G$ is a SAS-group, then $\mathrm{Lie}(G)$ is not always simple. 
			Let us consider the commutative diagram with the exact rows :
			\[\begin{array}{ccccccccc}
				1 & \to & D & \to & \mathrm{GL}_n & \to & \mathrm{PGL}_n & \to & 1 \\
				&     & \uparrow & & \uparrow     &     &  \uparrow      &     & \\
				1 & \to & \mu_n & \to & \mathrm{SL}_n & \to & \mathrm{PSL}_n & \to & 1 	
			\end{array}\]
			where $D\simeq G_m$ is the center of $\mathrm{GL}_n$ consisting of scalar matrices. Besides, the vertical arrows are close embeddings (cf. \cite[Theorem 5.34]{milne}), and $\mathrm{PSL}_n\to\mathrm{PGL}_n$ is an isomorphism. In fact, $\dim\mathrm{PSL}_n=\dim\mathrm{SL}_n=\dim\mathrm{PGL}_n$ and all these groups are smooth and connected. Therefore, \cite[Theorem 12.4]{water} implies the statement. 
			
			Lemma \ref{some exact sequence} infers $\mathrm{Lie}(\mathrm{PGL}_n)\simeq \mathfrak{gl}_n/\Bbbk I_n=\mathfrak{pgl}_n$.
			Furthermore, we have a commutative diagram of Lie algebras with the exact upper row
			\[\begin{array}{ccccccccc}
				0 & \to & \Bbbk I_n & \to & \mathfrak{gl}_n & \to & \mathfrak{pgl}_n & \to & 0 \\
				&     & \uparrow & & \uparrow     &     &  \uparrow      &     & \\
				0 & \to & \mathrm{Lie}(\mu_n) & \to & \mathfrak{sl}_n & \to & \mathrm{Lie}(\mathrm{PSL}_n) &  & 	
			\end{array}\]
			The right exactness of the bottom horizontal row depends on $char\Bbbk$. As has already been observed, $\mathrm{Lie}(\mu_n)=0$ if and only if $char\Bbbk=0$ or $char\Bbbk=p>0$ and $p\nmid n$. In this case $\mathfrak{sl}_n\simeq \mathrm{Lie}(\mathrm{PSL}_n)\simeq\mathfrak{pgl}_n$. If $p|n$, then $\mathrm{Lie}(\mu_n)=\Bbbk I_n$ and $\mathfrak{psl}_n=\mathfrak{sl}_n/\Bbbk I_n$ is a proper (simple) Lie subalgebra in $\mathrm{Lie}(\mathrm{PSL}_n)$. In other words, $\mathrm{SL}_n\to\mathrm{PSL}_n$ is an {\bf inseparable isogeny} (meaning that the quotient morphism $\mathrm{SL}_n\to \mathrm{PSL}_n$ is not separable, i.e. the induced morphism of Lie algebras $\mathfrak{sl}_n\to\mathfrak{psl}_n$ is not surjective). Arguing as in Subsection 2.1, one can derive that $\mathfrak{psl}_n$ is an ideal in $\mathrm{Lie}(\mathrm{PSL}_n)$ (of codimension one), that coincides with
			$[\mathfrak{pgl}_n, \mathfrak{pgl}_n]$.
			
			Recall that $\mathrm{SL}_n$ is a SAS-group (cf. \cite[Summary 24.66(A)]{milne}). Thus, by Lemma \ref{when a quotient of SAS is SAS}, $\mathrm{PSL}_n$ is a SAS-group too. But the Lie algebras of both groups are not simple, provided $p|n$.}
	\end{rem}
	
	The situation described above is typical. 
	\begin{pr}\label{Lie algebra of SAS}
		If $G$ is a SAS-group of type different from $\mathrm{G}_2$, or $G$ is of type $\mathrm{G}_2$ and $\mathrm{char}\Bbbk\neq 3$, then one of the following alternatives hold:
		\begin{enumerate}
			\item $\mathrm{Lie}(G)$ is simple;
			\item The center $\mathrm{Z}(\mathrm{Lie}(G))$ is one-dimensional and $\mathrm{Lie}(G)/\mathrm{Z}(\mathrm{Lie}(G))$ is a simple Lie algebra;
			\item $[\mathrm{Lie}(G), \mathrm{Lie}(G)]$ is a simple ideal of codimension one in $\mathrm{Lie}(G)$.    
		\end{enumerate}
		Finally, if $G$ of type $\mathrm{G}_2$ and $\mathrm{char}\Bbbk=3$, then $\mathrm{Lie}(G)$ contains an ideal $I$, such that $I\simeq \mathrm{Lie}(G)/I\simeq\mathfrak{psl}_3$. 
	\end{pr}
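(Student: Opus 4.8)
The plan is to reduce the statement to the classification of almost-simple algebraic groups together with the (classical) description of the Lie algebra of a simply connected simple algebraic group, the only real work being to track how a possibly inseparable central isogeny affects that Lie algebra. Since $G$ is purely even, being SAS is the same as being WAS, so $G$ is an almost-simple algebraic group in the sense of \cite{milne}; by the classification $G\simeq\widetilde G/N$, where $\widetilde G$ is simply connected of one of the Dynkin types $A_\ell,B_\ell,C_\ell,D_\ell,E_6,E_7,E_8,F_4,G_2$ and $N$ is a finite (hence diagonalizable) subgroup scheme of $\mathrm{Z}(\widetilde G)$. Let $\pi\colon\widetilde G\to G$ be the quotient isogeny, put $\widetilde{\mathfrak g}=\mathrm{Lie}(\widetilde G)$, $\mathfrak g=\mathrm{Lie}(G)$, and $\mathfrak s=\mathrm d_e\pi(\widetilde{\mathfrak g})\subseteq\mathfrak g$. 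Since $\ker\pi=N$ is central, $\ker(\mathrm d_e\pi)=\mathrm{Lie}(N)\subseteq\mathrm{Z}(\widetilde{\mathfrak g})$, so $\mathfrak s\simeq\widetilde{\mathfrak g}/\mathrm{Lie}(N)$; and as $\widetilde G$ and $G$ are smooth (being semisimple) of the same dimension, $\dim\widetilde{\mathfrak g}=\dim\widetilde G=\dim G=\dim\mathfrak g$ (cf.\ \cite[Corollary 12.2]{water}), so $\mathfrak s$ has codimension $\dim\mathrm{Lie}(N)$ in $\mathfrak g$. Moreover $\mathfrak s$ is an ideal: $\mathrm d_e\pi$ is $\widetilde G$-equivariant and the adjoint action of $\widetilde G$ on $\mathfrak g$ factors through $G$ (the central $N$ acts trivially on $\mathrm{Lie}(G)$), hence $\mathfrak s$ is an $\mathrm{Ad}(G)$-stable subspace of $\mathrm{Lie}(G)$, and any such subspace is a Lie ideal. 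Finally, since $\mathrm{char}\,\Bbbk$ is $0$ or odd and $N$ is diagonalizable, inspecting the orders of the centers $\mathrm{Z}(\widetilde G)$ shows $\mathrm{Lie}(N)\neq 0$ only when the type is $A_\ell$ with $p\mid\ell+1$ or $E_6$ with $p=3$, and in those cases $\dim\mathrm{Lie}(N)=1$; in particular $\mathfrak s$ has codimension $0$ or $1$ in $\mathfrak g$.

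Next I would record the structure of $\widetilde{\mathfrak g}$ over $\Bbbk$: for the classical types this is the purely even version of Lemma~\ref{when sl is simple} (together with the analogous elementary computations for $\mathfrak{so}_n$ and $\mathfrak{sp}_{2n}$), and for the exceptional types it is classical (see \cite{gavfi1,gavfi2} and the references therein). In characteristic $0$ or odd characteristic $\widetilde{\mathfrak g}$ is simple, with the following exceptions: for $A_\ell$ with $p\mid\ell+1$ one has $\widetilde{\mathfrak g}=\mathfrak{sl}_{\ell+1}$, whose center is the line $\Bbbk I_{\ell+1}$ and whose quotient $\mathfrak{psl}_{\ell+1}$ is simple; for $E_6$ with $p=3$, $\widetilde{\mathfrak g}$ has a one-dimensional center with simple quotient; and for $G_2$ with $p=3$, $\widetilde{\mathfrak g}=\mathrm{Lie}(G_2)$ contains an ideal $I\simeq\mathfrak{psl}_3$ with $\mathrm{Lie}(G_2)/I\simeq\mathfrak{psl}_3$. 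Since $N\subseteq\mathrm{Z}(\widetilde G)$ one always has $\mathrm{Lie}(N)\subseteq\mathrm{Z}(\widetilde{\mathfrak g})$; comparing with the first paragraph, $\mathrm{Lie}(N)\neq 0$ occurs exactly in the two cases where $\widetilde{\mathfrak g}$ has a one-dimensional center, and there $\dim\mathrm{Z}(\widetilde{\mathfrak g})=1$ forces $\mathrm{Lie}(N)=\mathrm{Z}(\widetilde{\mathfrak g})$.

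It would then remain to assemble the cases. Suppose first that the type is not $G_2$ with $p=3$. If $\mathrm{Lie}(N)=0$, then $\mathrm d_e\pi$ is injective, $\mathfrak g=\mathfrak s\simeq\widetilde{\mathfrak g}$, and so either $\mathfrak g$ is simple (alternative $(1)$) or $\mathfrak g$ has a one-dimensional center with simple quotient (alternative $(2)$). If $\mathrm{Lie}(N)\neq 0$, then $\mathfrak s\simeq\widetilde{\mathfrak g}/\mathrm{Z}(\widetilde{\mathfrak g})$ is simple and, by the codimension count, is a codimension-one ideal of $\mathfrak g$; since $\mathfrak g/\mathfrak s$ is one-dimensional, hence abelian, $[\mathfrak g,\mathfrak g]\subseteq\mathfrak s$, while $\mathfrak s=[\mathfrak s,\mathfrak s]\subseteq[\mathfrak g,\mathfrak g]$ because $\mathfrak s$ is simple; therefore $[\mathfrak g,\mathfrak g]=\mathfrak s$ is a simple ideal of codimension one (alternative $(3)$). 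Finally, if the type is $G_2$ with $p=3$, then the center of $G_2$ is trivial, so $N=e$, $G=G_2$, $\mathfrak g=\mathrm{Lie}(G_2)$, and the description recalled above gives the asserted ideal $I\simeq\mathfrak g/I\simeq\mathfrak{psl}_3$.

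The main obstacle is the structural input just above — the precise list of exceptions for $\widetilde{\mathfrak g}$, and in particular the descriptions of the Lie algebra of the simply connected group of type $E_6$ at $p=3$ and of $\mathrm{Lie}(G_2)$ at $p=3$. For the classical types the argument of Lemma~\ref{when sl is simple} applies essentially verbatim, but the exceptional cases genuinely rely on the classification/Chevalley-basis literature. Everything else is bookkeeping: the dimension count via smoothness, the observation that the image of a central isogeny is an $\mathrm{Ad}(G)$-stable (hence ideal) subspace, and the elementary fact that a codimension-one ideal $\mathfrak s$ makes $\mathfrak g/\mathfrak s$ abelian, which forces $[\mathfrak g,\mathfrak g]=\mathfrak s$ as soon as $\mathfrak s$ is simple.
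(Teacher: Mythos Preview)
Your argument is correct and matches the paper's approach: reduce to the simply connected cover $\widetilde G\to G$, track the central isogeny on Lie algebras via the codimension $\dim\mathrm{Lie}(N)\in\{0,1\}$, and then invoke the known ideal structure of $\widetilde{\mathfrak g}$ type by type in odd characteristic. The only cosmetic difference is that the paper routes the last step through an auxiliary Chevalley group $\widehat G$ so as to cite Strade \cite{strade} for the structure of the exceptional Lie algebras; your citation of \cite{gavfi1,gavfi2} for this input is misplaced (those papers treat Chevalley supergroups, not the modular ideal structure of $\mathfrak g(\mathbb Z)\otimes\Bbbk$), and should be replaced by \cite[\S\S4.1, 4.4]{strade}.
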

	\begin{proof}
		There is an \emph{universal covering}
		$\widetilde{G}\to G$, where $\widetilde{G}$ is a smooth connected simply connected group (see \cite[Definition 18.7 and Remark 18.27]{milne}). Since $\Bbbk$ is algebraically closed, $G\simeq\widetilde{G}/D$, where $D$ is a finite diagonalizable group. The group $\widetilde{G}$ is obviously SAS and $D$ is central (see \cite[Corollary 12.37]{milne}).
		By \cite[Theorem 23.62]{milne}, $G$ and $\widetilde{G}$ have the isomorphic indecomposable root systems, or equivalently, their root systems have the same \emph{Dynkin diagrams}. Let $\widehat{G}$ be a \emph{Chevalley group}, determined by this Dynkin diagram (cf. \cite[Theorem 23.72]{milne}). Note that $\mathrm{Lie}(\widehat{G})=\mathfrak{g}(\mathbb{Z})\otimes_{\mathbb{Z}}\Bbbk$, where $\mathfrak{g}$ is the simple Lie algebra over $\mathbb{C}$, corresponding to the aforementioned diagram, and $\mathfrak{g}(\mathbb{Z})$ is its $\mathbb{Z}$-form (cf. \cite[Theorem 4.1.1]{strade}). By \cite[Theorem 23.62]{milne}, there is a central isogeny $\widetilde{G}\to \widehat{G}$ with the kernel $D'$. 
		
		If the Dynkin diagram of $\widetilde{G}$ is $\mathrm{A}_n$, then $\widetilde{G}\simeq\mathrm{SL}_{n+1}$ and this case has been already considered. 
		
		Assume that 
		this diagram is different from $\mathrm{E}_6$. In this case the center $\mathrm{Z}(\widetilde{G})$ is isomorphic to one of the group in the list $\{e, \mu_2, \mu_2\times\mu_2, \mu_4\}$ (see \cite[24.c]{milne}), hence $\mathrm{Lie}(D)=\mathrm{Lie}(D')=0$ and $\mathrm{Lie}(\widetilde{G})\simeq \mathrm{Lie}(\widehat{G})\simeq\mathrm{Lie}(G)$ is simple, except the case
		$G$ is of type $\mathrm{G}_2$ and $\mathrm{char}\Bbbk=3$ (see \cite[Sections 4.1 and 4.4]{strade}). 
		
		If this diagram is $\mathrm{E}_6$, then $\mathrm{Z}(\widetilde{G})\simeq\mu_3$. In particular, if $\mathrm{char}\Bbbk >3$, then again $\mathrm{Lie}(\widetilde{G})\simeq \mathrm{Lie}(\widehat{G})\simeq\mathrm{Lie}(G)$ is simple. Finally, let $\mathrm{char}\Bbbk=3$. If $D'\neq e$, then $D'\simeq\mu_3$ and $\mathfrak{d}=\mathrm{Lie}(D')$ is a  one-dimensional central ideal in $\mathrm{Lie}(\widetilde{G})$. As above, it follows that $\mathrm{Lie}(\widetilde{G})/\mathfrak{d}$ is an ideal in $\mathrm{Lie}(\widehat{G})$ of codimension one, that obviously contradicts to \cite[4.4(A)]{strade}. Thus $\widetilde{G}\simeq\widehat{G}$ and either $G\simeq\widehat{G}$ satisfies $(2)$, or we have the inseparable isogeny $\widehat{G}\to G$, that corresponds to the alternative $(3)$. 
	\end{proof}
	\begin{pr}\label{Lie superalgebra of SAS}
		Let $\mathrm{char}\Bbbk=p >0$. If $\mathbb{G}$ is a SAS-supergroup with $\mathfrak{G}_{\bar 0}$ being  simple, then:
		\begin{enumerate}
			\item Any nonzero superideal of $\mathfrak{G}$ contains $[\mathfrak{G}, \mathfrak{G}]=\mathfrak{G}_{\bar 0}\oplus [\mathfrak{G}_{\bar 0}, \mathfrak{G}_{\bar 1}]$, where $[\mathfrak{G}_{\bar 0}, \mathfrak{G}_{\bar 1}]\neq 0$;
			\item If $\mathfrak{S}$ is a nonzero $\mathfrak{G}_{\bar 0}$-submodule in $\mathfrak{G}_{\bar 1}$, then $[\mathfrak{G}_{\bar 1}, \mathfrak{S}]=\mathfrak{G}_{\bar 0}$; 
			\item If $\mathfrak{G}_{\bar 1}/[\mathfrak{G}_{\bar 0}, \mathfrak{G}_{\bar 1}]\neq 0$ (equivalently, if $\mathfrak{G}$ is not simple), then any composition factor of $G$-module $\mathfrak{G}_{\bar 1}/[\mathfrak{G}_0, \mathfrak{G}_{\bar 1}]$ has a form $L(\lambda)^{[1]}$ and not all $\lambda$ are zero. 
		\end{enumerate} 
	\end{pr}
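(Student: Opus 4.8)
The plan is to reduce everything to two facts about $\mathfrak{G}_{\bar 1}$ as a $G$-module supplied by Proposition \ref{SAPS in positive char}, and then to exploit the symmetric, $G$-equivariant bracket $\beta\colon\mathfrak{G}_{\bar 1}\times\mathfrak{G}_{\bar 1}\to\mathfrak{G}_{\bar 0}=\mathrm{Lie}(G)$. We may assume $\mathfrak{G}_{\bar 1}\neq 0$ (otherwise the assertions are vacuous). Since a SAS-supergroup is not solvable, Proposition \ref{SAPS in positive char} gives that $G$ is SAS (hence semisimple, connected, with $\mathrm{Lie}(G)=\mathfrak{G}_{\bar 0}$ simple by hypothesis) and that $\mathbb{G}$ satisfies conditions $(1)$ and $(2)$ of Lemma \ref{a very particular case}: $(G-1)\mathfrak{G}_{\bar 1}=\mathfrak{G}_{\bar 1}$, and $\mathfrak{G}_{\bar 1}$ has no nonzero $G$-submodule annihilated by $\beta$.

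First I would show $\beta$ is non-degenerate. Its radical $\mathfrak{W}_0=\{w\in\mathfrak{G}_{\bar 1}\mid[w,\mathfrak{G}_{\bar 1}]=0\}$ is a $G$-submodule, since $\beta$ is $G$-equivariant and $g\mathfrak{G}_{\bar 1}=\mathfrak{G}_{\bar 1}$; so condition $(2)$ forces $\mathfrak{W}_0=0$. In particular $[\mathfrak{G}_{\bar 1},\mathfrak{G}_{\bar 1}]\neq 0$, and a routine super-Jacobi computation (using $[\mathfrak{G}_{\bar 0},\mathfrak{G}_{\bar 1}]\subseteq\mathfrak{G}_{\bar 1}$) shows $[\mathfrak{G}_{\bar 1},\mathfrak{G}_{\bar 1}]$ is an ideal of the simple Lie algebra $\mathfrak{G}_{\bar 0}$, hence $[\mathfrak{G}_{\bar 1},\mathfrak{G}_{\bar 1}]=\mathfrak{G}_{\bar 0}$. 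Thus $[\mathfrak{G},\mathfrak{G}]_{\bar 0}=[\mathfrak{G}_{\bar 0},\mathfrak{G}_{\bar 0}]+[\mathfrak{G}_{\bar 1},\mathfrak{G}_{\bar 1}]=\mathfrak{G}_{\bar 0}$ and $[\mathfrak{G},\mathfrak{G}]_{\bar 1}=[\mathfrak{G}_{\bar 0},\mathfrak{G}_{\bar 1}]$, i.e. $[\mathfrak{G},\mathfrak{G}]=\mathfrak{G}_{\bar 0}\oplus[\mathfrak{G}_{\bar 0},\mathfrak{G}_{\bar 1}]$ as claimed. The same super-Jacobi computation shows that for any $\mathfrak{G}_{\bar 0}$-submodule $\mathfrak{S}\subseteq\mathfrak{G}_{\bar 1}$ the subspace $[\mathfrak{G}_{\bar 1},\mathfrak{S}]$ is an ideal of $\mathfrak{G}_{\bar 0}$; if $\mathfrak{S}\neq 0$ it cannot be annihilated by $\beta$ (else $\mathfrak{S}\subseteq\mathfrak{W}_0=0$), so $[\mathfrak{G}_{\bar 1},\mathfrak{S}]\neq 0$, hence $=\mathfrak{G}_{\bar 0}$; this is $(2)$. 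For $(1)$, let $\mathfrak{I}=\mathfrak{I}_{\bar 0}\oplus\mathfrak{I}_{\bar 1}$ be a nonzero superideal. If $\mathfrak{I}_{\bar 1}\neq 0$ then by $(2)$, $\mathfrak{G}_{\bar 0}=[\mathfrak{G}_{\bar 1},\mathfrak{I}_{\bar 1}]\subseteq\mathfrak{I}$, so $[\mathfrak{G}_{\bar 0},\mathfrak{G}_{\bar 1}]\subseteq[\mathfrak{I},\mathfrak{G}]\subseteq\mathfrak{I}$ and hence $[\mathfrak{G},\mathfrak{G}]\subseteq\mathfrak{I}$; the alternative $\mathfrak{I}_{\bar 1}=0$ would force $\mathfrak{I}_{\bar 0}=\mathfrak{G}_{\bar 0}$ and then $[\mathfrak{G}_{\bar 0},\mathfrak{G}_{\bar 1}]=[\mathfrak{G}_{\bar 1},\mathfrak{I}_{\bar 0}]\subseteq\mathfrak{I}_{\bar 1}=0$, which is excluded by the next step.

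The crux is to prove $[\mathfrak{G}_{\bar 0},\mathfrak{G}_{\bar 1}]\neq 0$. Suppose it vanishes. Then $\mathrm{Lie}(G)$ acts trivially on $\mathfrak{G}_{\bar 1}$; since $\mathrm{Dist}(G_1)$ is generated by $\mathrm{Lie}(G)$, the first Frobenius kernel $G_1$ acts trivially on $\mathfrak{G}_{\bar 1}$, so $\mathfrak{G}_{\bar 1}\simeq M^{[1]}$ for some $G$-module $M$. As $\mathrm{char}\,\Bbbk$ is odd, $S^2\mathfrak{G}_{\bar 1}\simeq(S^2M)^{[1]}$, so $G_1$ acts trivially on $S^2\mathfrak{G}_{\bar 1}$, hence on its quotient $[\mathfrak{G}_{\bar 1},\mathfrak{G}_{\bar 1}]=\mathfrak{G}_{\bar 0}$ under the $G$-equivariant map induced by $\beta$. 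But $G_1$ acts on $\mathfrak{G}_{\bar 0}=\mathrm{Lie}(G)$ through the adjoint representation, and triviality of this action means $\mathrm{ad}\equiv 0$, i.e. $\mathfrak{G}_{\bar 0}$ is abelian — contradicting simplicity. This finishes $(1)$.

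For $(3)$, observe that $[\mathfrak{G}_{\bar 0},\mathfrak{G}_{\bar 1}]$ is a $G$-submodule (image of the $G$-equivariant map $\mathrm{Lie}(G)\otimes\mathfrak{G}_{\bar 1}\to\mathfrak{G}_{\bar 1}$), so $N:=\mathfrak{G}_{\bar 1}/[\mathfrak{G}_{\bar 0},\mathfrak{G}_{\bar 1}]$ is a $G$-module; by $(1)$, $N=0\iff[\mathfrak{G},\mathfrak{G}]=\mathfrak{G}\iff\mathfrak{G}$ is simple, which gives the parenthetical. Assume $N\neq 0$. By construction $\mathrm{Lie}(G)$ annihilates $N$, hence so does $G_1$, hence $N\simeq\bar{N}^{[1]}$ and every composition factor of $N$ has the form $L(\lambda)^{[1]}$. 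If all these $\lambda$ were $0$, the nonzero finite-dimensional module $N$ would have a trivial simple quotient, hence so would $\mathfrak{G}_{\bar 1}$, contradicting $(G-1)\mathfrak{G}_{\bar 1}=\mathfrak{G}_{\bar 1}$ (condition $(1)$ of Lemma \ref{a very particular case}). Hence not all $\lambda$ vanish. The only genuinely non-formal ingredient is the Frobenius-twist dictionary used in the crux step and in $(3)$ — trivial $\mathrm{Lie}(G)$-action $\Leftrightarrow$ trivial $G_1$-action $\Leftrightarrow$ being a twist $(-)^{[1]}$ — which is standard; everything else is elementary bracket bookkeeping.
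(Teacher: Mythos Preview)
Your proof is correct and follows the same overall strategy as the paper: invoke Proposition \ref{SAPS in positive char} to get conditions (1)--(2) of Lemma \ref{a very particular case}, then do bracket bookkeeping using simplicity of $\mathfrak{G}_{\bar 0}$.

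Two small implementation differences are worth flagging. Your Frobenius-twist detour for $[\mathfrak{G}_{\bar 0},\mathfrak{G}_{\bar 1}]\neq 0$ works but is more than needed: once you have $[\mathfrak{G}_{\bar 1},\mathfrak{G}_{\bar 1}]=\mathfrak{G}_{\bar 0}$, Jacobi gives $[\mathfrak{G}_{\bar 0},\mathfrak{G}_{\bar 0}]=[\mathfrak{G}_{\bar 0},[\mathfrak{G}_{\bar 1},\mathfrak{G}_{\bar 1}]]\subseteq [[\mathfrak{G}_{\bar 0},\mathfrak{G}_{\bar 1}],\mathfrak{G}_{\bar 1}]=0$ directly, which is the paper's one-line argument. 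Conversely, for (3) your route is slightly more direct than the paper's: the paper constructs the proper normal supersubgroup corresponding to $(R,[\mathfrak{G}_{\bar 0},\mathfrak{G}_{\bar 1}])$ with $R=\ker(G\to\mathrm{GL}(N))$, uses the SAS hypothesis to force $R$ finite, and then deduces $G_1\leq R$; you simply observe that $\mathrm{Lie}(G)$ kills $N$ by construction, hence $G_1$ does, which reaches the same conclusion without invoking SAS at that step.
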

	\begin{proof}
		Let $\mathfrak{I}$ be a nonzero superideal in $\mathfrak{G}$.  
		Then $\mathfrak{I}_{\bar 0}$ is either zero, or $\mathfrak{I}_{\bar 0}=\mathfrak{G}_{\bar 0}$. In the first case, we have $[\mathfrak{G}_{\bar 1}, \mathfrak{I}_{\bar 1}]=0$. Set $\mathfrak{J}=\sum_{g\in G(\Bbbk)} g\mathfrak{I}_{\bar 1}$. Since $G$ is smooth, it follows that $\mathfrak{J}$ is a $G$-submodule of $\mathfrak{G}_{\bar 1}$, such that $[\mathfrak{G}_{\bar 1}, \mathfrak{J}]=0$, and Proposition \ref{SAPS in positive char} implies $\mathfrak{I}_{\bar 1}\subseteq \mathfrak{J}=0$ because $\mathbb{G}$ is SAS of first type. Therefore, any nonzero superideal $\mathfrak{I}$ contains $\mathfrak{G}_{\bar 0}\oplus [\mathfrak{G}_{\bar 0}, \mathfrak{G}_{\bar 1}]$. Furthermore, the same arguments show that if $\mathfrak{S}$
		is a nonzero $\mathfrak{G}_{\bar 0}$-submodule in $\mathfrak{G}_{\bar 1}$, then $[\mathfrak{G}_{\bar 1}, \mathfrak{S}]\neq 0$, hence it equals $\mathfrak{G}_{\bar 0}$. 
		If $[\mathfrak{G}_{\bar 0}, \mathfrak{G}_{\bar 1}]=0$, then $\mathfrak{G}_{\bar 0}=[\mathfrak{G}_{\bar 1}, \mathfrak{G}_{\bar 1}]$ infers that $\mathfrak{G}_{\bar 0}$ is abelian, a contradiction.
		
		Set $R=\ker(G\to\mathrm{GL}(\mathfrak{G}_{\bar 1}/[\mathfrak{G}_{\bar 0}, \mathfrak{G}_{\bar 1}]))$. Since $\mathrm{Lie}(R)=\mathfrak{G}_{\bar 0}$, the Harish-Chandra subpair $(R, [\mathfrak{G}_{\bar 0}, \mathfrak{G}_{\bar 1}])$
		represents a proper normal supersubgroup $\mathbb{R}$. Thus $R$ is finite, that infers $G_1\leq R$ by \cite[II, \S 7, Corollary 4.3(a)]{dg} and $G_1$ acts trivially on
		$\mathfrak{G}_{\bar 1}/[\mathfrak{G}_{\bar 0}, \mathfrak{G}_{\bar 1}]$. By Steinberg's Tensor Product Theorem (cf. \cite[Theorem II.3.17]{jan}), the first half of $(3)$ follows. Finally, if each
		$\lambda$ equals zero, then $\mathfrak{G}_{\bar 1}/[\mathfrak{G}_{\bar 0}, \mathfrak{G}_{\bar 1}]$ is a nonzero direct sum of several copies of trivial $G$-module $L(0)\simeq \Bbbk$ by \cite[Proposition II.2.14]{jan}, which is a contradiction.
	\end{proof}	
	This proposition shows that the Lie superalgebra $\mathfrak{G}$ of a SAS-supergroup $\mathbb{G}$, such that $\mathfrak{G}_0$ is simple, is very close to being simple too. Nevertheless, it leaves the opportunity that $\mathbb{G}$ can be SAS even if $\mathfrak{G}$ is not simple. The following example shows that the latter is indeed the case.
	
	\subsection{SAS-supergroup with not simple Lie superalgebra}
	
	Recall that $\mathrm{SL}_2$ is a SAS-group, provided $\mathrm{char}\Bbbk\neq 2$. Moreover, its Lie algebra $\mathfrak{sl}_2$ is simple. 
	
	Fix the maximal torus $T$ of $\mathrm{SL}_2$ consisting of diagonal matrices. The character group $X(T)$ can be identified with $\mathbb{Z}$, as well as
	the set of dominant weights $X(T)^+$ is identified with $\mathbb{N}$. Let $X_2$ and $X_{-2}$ denote the root subgroups
	\[X_2(R)=\{I_2+rE_{12}\mid r\in R \}, \ X_{-2}(R)=\{I_2+rE_{21}\mid r\in R \}, \ R\in\mathsf{Alg}_{\Bbbk}.\]
	
	An irreducible $\mathrm{SL}_2$-module $L(n)$ is the socle of the induced module $H^0(n)\simeq\mathrm{Sym}_n(V)$, where $V$ denotes the standard $\mathrm{SL}_2$-module with basis
	$v_1, v_2$. The $n$-th symmetric power $\mathrm{Sym}_n(V)$ 
	has a basis $s_i=v_1^i v_2^{n-i}, 0\leq i\leq n$. The weight of $s_i$ is equal to $2i-n$. 
	
	Recall that the structure of $\mathrm{SL}_2$-module on $\mathrm{Sym}_n(V)$ is equivalent to its induced structure of $\mathrm{Dist}(\mathrm{SL}_2)$-module, so that
	\[\left(\begin{array}{c}
		H \\
		t
	\end{array}\right)s_i=\left(\begin{array}{c}
		2i-n \\
		t
	\end{array}\right)s_i, \   E_{12}^{(k)}s_i=\left(\begin{array}{c}
		n-i \\
		k
	\end{array}\right)s_{i+k}, \ E_{21}^{(k)}s_i=\left(\begin{array}{c}
		i \\
		k
	\end{array}\right)s_{i-k}, \ t, k\geq 0, \]
	where $s_i$ is supposed to be zero, whenever $i> n$ or $i<0$. Besides, $H=E_{11}-E_{22}$ is the generator of \emph{Cartan subalgebra} $\mathrm{Lie}(T)$.
	\begin{rem}\label{Z-lattice}
		The above formulas can be obtained as follows. Assume that the ground field has zero characteristic. Then $V$ contains the free $\mathbb{Z}$-submodule $V_{\mathbb{Z}}=\mathbb{Z}v_1+\mathbb{Z}v_2$, that is also $(\mathfrak{sl}_2)_{\mathbb{Z}}$-module, where $(\mathfrak{sl}_2)_{\mathbb{Z}}=\mathbb{Z}H + \mathbb{Z}E_{12}+\mathbb{Z}E_{21}$ is a $\mathbb{Z}$-form of Lie algebra $\mathfrak{sl}_2$. Furthermore, the free $\mathbb{Z}$-submodule $\mathrm{Sym}_n(V)_{\mathbb{Z}}=\sum_{0\leq i\leq n}\mathbb{Z}s_i$ is a  $(\mathfrak{sl}_2)_{\mathbb{Z}}$-module as well, hence it is a $\mathrm{U}(\mathfrak{sl}_2)_{\mathbb{Z}}$-module, where $\mathrm{U}(\mathfrak{sl}_2)_{\mathbb{Z}}$ is the Kostant's $\mathbb{Z}$-form of the enveloping algebra 
		of $\mathfrak{sl}_2$ (cf. \cite[II.1.12]{jan}).	It remains to note that $\mathrm{Sym}_n(V)\simeq \mathrm{Sym}_n(V)_{\mathbb{Z}}\otimes_{\mathbb{Z}}\Bbbk$ and  $\mathrm{Dist}(\mathrm{SL}_2)\simeq \mathrm{U}(\mathfrak{sl}_2)_{\mathbb{Z}}\otimes_{\mathbb{Z}}\Bbbk$ over arbitrary field $\Bbbk$.
		
	\end{rem}
	
	Since the longest element $w_0$ of the Weyl group of $\mathrm{SL}_2$ is equal to $-1$, we have $L(n)^*\simeq L(-w_0(n))=L(n)$. In particular, the Weyl module $V(n)$ is isomorphic to
	$H^0(-w_0(n))^*=\mathrm{Sym}_n(V)^*$. Moreover, if $0\leq n< p$, then $L(n)=H^0(n)\simeq V(n)$. 
	
	The $\mathrm{SL}_2$-module $\mathrm{Sym}_n(V)^*$ has the (dual) basis 
	$s_i^*,  0\leq i\leq n$. The induced action of Lie algebra $\mathfrak{sl}_2$ on $\mathrm{Sym}_n(V)^*$ is determined by  
	\[H s^*_i=(n-2i)s_i^*, \ E_{12}s^*_i=-(n-i+1)s^*_{i-1}, \ E_{21}s_i^*=-(i+1)s^*_{i+1}, 0\leq i\leq n .\]

	Any $\mathrm{SL}_2$-equivariant symmetric bilinear map $\mathrm{Sym}_n(V)^*\times \mathrm{Sym}_n(V)^*\to\mathfrak{sl}_2$ is determined by the identities:  
	\[[s^*_i, s^*_{n-i}]=a_i H, \ [s^*_{j}, s^*_{n-1-j}]=b_j E_{12}, \ [s^*_k, s^*_{n+1-k}]=c_k E_{21}, \]
	\[a_i=a_{n-i}, \ b_j=b_{n-1-j}, \ c_k=c_{n+1-k},\]
	\[0\leq i\leq n, \ 0\leq j\leq n-1, 1\leq k\leq n.\]
	Besides, if $i+i'\neq n, n\pm 1$, then $[s^*_i, s^*_{i'}]=0$.
	
	\begin{pr}\label{the unique form}
		The space $\mathrm{Hom}_{\mathrm{SL}_2}(\mathrm{Sym}_2(\mathrm{Sym}_n(V)^*), \mathfrak{sl}_2)$ is nonzero if and only if $n$ is odd. Moreover, in the latter case it is one-dimensional and if $n\leq p$ or $p=0$, then we additionally have
		\[\mathrm{Hom}_{\mathrm{SL}_2}(\mathrm{Sym}_2(\mathrm{Sym}_n(V)^*), \mathfrak{sl}_2)=\mathrm{Hom}_{\mathfrak{sl}_2}(\mathrm{Sym}_2(\mathrm{Sym}_n(V)^*), \mathfrak{sl}_2).\]	
	\end{pr}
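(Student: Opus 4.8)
The plan is to push the explicit description of $\mathrm{SL}_2$-equivariant symmetric bilinear maps $\beta\colon\mathrm{Sym}_n(V)^*\times\mathrm{Sym}_n(V)^*\to\mathfrak{sl}_2$ given just above the statement to its conclusion. Such a $\beta$ is encoded by scalars $a_i$ ($0\le i\le n$), $b_j$ ($0\le j\le n-1$), $c_k$ ($1\le k\le n$), subject to $a_i=a_{n-i}$, $b_j=b_{n-1-j}$, $c_k=c_{n+1-k}$ and with $[s^*_i,s^*_{i'}]=0$ unless $i+i'\in\{n-1,n,n+1\}$; what remains is to impose full $\mathrm{SL}_2$-equivariance and solve. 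First I would apply the operators $E_{12}$ and $E_{21}$ to each of the three defining families $[s^*_i,s^*_{n-i}]=a_iH$, $[s^*_j,s^*_{n-1-j}]=b_jE_{12}$, $[s^*_k,s^*_{n+1-k}]=c_kE_{21}$, using the explicit $\mathfrak{sl}_2$-action on $\mathrm{Sym}_n(V)^*$, the adjoint action on $\mathfrak{sl}_2$, and the Leibniz rule on the symmetric square. Comparing coefficients of $H$, $E_{12}$, $E_{21}$ produces a linear system in which the $b_j$ and $c_k$ become explicit first-order expressions in the $a_i$, while the $a_i$ satisfy a second-order recursion $(i+1)^2a_{i+1}=\gamma_i a_i-(n-i+1)^2a_{i-1}$ (with $a_{-1}=a_{n+1}=0$) whose explicitly computable coefficient $\gamma_i$ is symmetric under $i\mapsto n-i$.

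From this the dimension bound is immediate: the instance $i=0$ determines $a_1$ from $a_0$, hence every $a_i$ — and then every $b_j$, $c_k$ — is determined by $a_0$, so $\dim\mathrm{Hom}_{\mathrm{SL}_2}\bigl(\mathrm{Sym}_2(\mathrm{Sym}_n(V)^*),\mathfrak{sl}_2\bigr)\le1$, and likewise with $\mathfrak{sl}_2$ in place of $\mathrm{SL}_2$, since the recursion is precisely the $\mathfrak{sl}_2$-equivariance condition. To decide when the space is nonzero I would feed the recursive solution back into $a_i=a_{n-i}$: using the $i\mapsto n-i$ symmetry of the recursion, a short computation shows the solution issuing from $a_0$ is reflection-compatible — so that all the palindromic identities hold — exactly when $n$ is odd, while for $n$ even one is forced into a relation of the form $a_0=-a_0$, so $a_0=0$ (this is where $\mathrm{char}\,\Bbbk$ odd enters) and $\beta=0$. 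For $n$ odd I then need an actual nonzero $\mathrm{SL}_2$-equivariant map, and I would exhibit the $(n-1)$-st transvectant $\mathrm{Sym}_n(V)^*\times\mathrm{Sym}_n(V)^*\to\mathrm{Sym}_2(V)^*\cong\mathfrak{sl}_2$ — the $(n-1)$-fold contraction against the invariant alternating form on $V$ followed by multiplication — which is defined over $\mathbb{Z}$, is symmetric precisely because $n-1$ is even, and is nonzero; this is immediate when $n$ is small relative to $p$, and for $n\ge p$ one needs an extra check that it does not degenerate modulo $p$.

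For the final assertion, assume $n\le p$ or $p=0$. The inclusion $\mathrm{Hom}_{\mathrm{SL}_2}\subseteq\mathrm{Hom}_{\mathfrak{sl}_2}$ is automatic. Conversely, a linear map $\mathrm{Sym}_2(\mathrm{Sym}_n(V)^*)\to\mathfrak{sl}_2$ is $\mathrm{SL}_2$-equivariant iff it commutes with all divided powers $E_{12}^{(k)},E_{21}^{(k)}$. Since $\mathrm{char}\,\Bbbk$ is odd, $E_{12}^{(2)}=\tfrac12E_{12}^2$ and $E_{21}^{(2)}=\tfrac12E_{21}^2$ already lie in $U(\mathfrak{sl}_2)$; for $k\ge3$ the operators $E_{12}^{(k)},E_{21}^{(k)}$ annihilate $\mathfrak{sl}_2\cong L(2)$, so nothing new is demanded on the target, while on the source, when $n\le p$, the formula $E_{12}^{(k)}s^*_i=(-1)^k\binom{n-i+k}{k}s^*_{i-k}$ (and its $E_{21}$-counterpart) shows $E_{12}^{(k)}$ acts on $\mathrm{Sym}_n(V)^*$, hence on its symmetric square, through $U(\mathfrak{sl}_2)$ in the range where it is nonzero. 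So every $\mathfrak{sl}_2$-equivariant map is automatically $\mathrm{Dist}(\mathrm{SL}_2)$-equivariant, which gives the equality (the boundary case $n=p$ costing one extra line to see $E_{12}^{(p)}$ imposes no new condition).

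The real obstacle is twofold: getting the reflection-compatibility computation in the second step exactly right so that the clean dichotomy "nonzero $\iff n$ odd" genuinely drops out of the recursion; and, in positive characteristic, controlling the gap between $\mathrm{Dist}(\mathrm{SL}_2)$ and $U(\mathfrak{sl}_2)$ — confirming that for $n$ odd the unique $\mathfrak{sl}_2$-equivariant candidate is in fact $\mathrm{SL}_2$-equivariant and stays nonzero modulo $p$, and that the identification with $\mathrm{Hom}_{\mathfrak{sl}_2}$ really does persist up to and including $n=p$.
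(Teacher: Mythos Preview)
Your approach is genuinely different from the paper's, and it has a gap precisely where the paper brings in a tool you do not use.

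Your recursion $(i+1)^2 a_{i+1}=\gamma_i a_i-(n-i+1)^2 a_{i-1}$ is the $\mathfrak{sl}_2$-equivariance condition, so any bound it yields is a bound on $\dim\mathrm{Hom}_{\mathfrak{sl}_2}$, and via the inclusion $\mathrm{Hom}_{\mathrm{SL}_2}\subseteq\mathrm{Hom}_{\mathfrak{sl}_2}$ also on $\dim\mathrm{Hom}_{\mathrm{SL}_2}$. But the step ``$i=0$ determines $a_1$, hence every $a_i$ is determined by $a_0$'' breaks in positive characteristic as soon as $n\ge p$: the leading coefficient $(i+1)^2$ vanishes whenever $p\mid(i+1)$, and at those indices the recursion does not determine $a_{i+1}$. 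One can try to repair this with the palindromic constraints $a_i=a_{n-i}$, and in small examples they do close the gap, but it is not at all clear that recursion plus reflection always forces $\dim\le 1$ for arbitrary $n$ in characteristic $p$. The same issue hits your existence argument: you acknowledge that the transvectant may degenerate mod $p$ when $n\ge p$, and you give no mechanism to rule that out.

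The paper sidesteps both problems by dualizing. Using $\mathrm{Sym}_2(M^*)\simeq\mathrm{Sym}_2(M)^*$ and $\mathfrak{sl}_2\simeq\mathfrak{sl}_2^*$, the question becomes $\dim\mathrm{Hom}_{\mathrm{SL}_2}\bigl(V(2),\mathrm{Sym}_2(\mathrm{Sym}_n(V))\bigr)$. By the Donkin--Mathieu theorem, $\mathrm{Sym}_2(\mathrm{Sym}_n(V))$ has a good filtration, so this dimension equals the multiplicity of $H^0(2)$ as a good-filtration factor, which is read off from the formal character and is therefore \emph{independent of the characteristic}. That single stroke reduces everything to characteristic zero, where $\mathrm{Hom}_{\mathrm{SL}_2}=\mathrm{Hom}_{\mathfrak{sl}_2}$ and a short highest-weight-vector computation (a first-order recursion with coefficients $1,\dots,n-1$, nonzero for $n\le p$) gives the answer, including the parity dichotomy. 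Your direct attack on the structural constants is in the spirit of the computation the paper carries out \emph{after} the proposition to exhibit the explicit solution, but by itself it does not prove the proposition in positive characteristic for $n>p$; the good-filtration input (or something equivalent) is the missing ingredient.
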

	\begin{proof}
		Let $G$ be an algebraic group and $M$ be a finite dimensional $G$-module. Let $\mathfrak{g}$ denote $\mathrm{Lie}(G)$. Since $\mathrm{char}\Bbbk\neq 2$, we have 
		$M^{\otimes 2}=\mathrm{Sym}_2(M)\oplus\Lambda^2(M)$, where $\mathrm{Sym}_2(M)\simeq \frac{1}{2}(\mathrm{id}_{M^{\otimes 2}}+\tau)(M^{\otimes 2})$, $\Lambda^2(M)\simeq \frac{1}{2}(\mathrm{id}_{M^{\otimes 2}}-\tau)(M^{\otimes 2})$ and $\tau(m\otimes n)=n\otimes m, m, n\in M$. Let $(M^*)^{\otimes 2}\otimes M^{\otimes 2}\to \Bbbk$ be the natural 
		nondegenerate $G$-invariant pairing \[\langle \phi\otimes\psi, m\otimes n\rangle =\phi(m)\psi(n), m, n\in M, \phi, \psi\in M^*.\]
		Then $\langle \mathrm{Sym}_2(M^*), \Lambda^2(M)\rangle =0$ infers the isomorphism $\mathrm{Sym}_2(M^*)\simeq \mathrm{Sym}_2(M)^*$ of $G$-modules, and of $\mathfrak{g}$-modules as well. 
		
		In particular, $\mathrm{Sym}_2(\mathrm{Sym}_n(V)^*)\simeq \mathrm{Sym}_2(\mathrm{Sym}_n(V))^*$, regarded as $\mathrm{SL}_2$-module and $\mathfrak{sl}_2$-module simultaneously. 	
		Note also that $\mathrm{sl}_2$, regarded as a $\mathrm{SL}_2$-module with respect to the adjoint action, is isomorphic to $L(2)$. In particular,  $\mathfrak{sl}_2^*\simeq\mathfrak{sl}_2$. Summing all up, we have
		\[\mathrm{Hom}_{\mathrm{SL}_2}(\mathrm{Sym}_2(\mathrm{Sym}_n(V)^*), \mathfrak{sl}_2)\simeq \mathrm{Hom}_{\mathrm{SL}_2}(\mathfrak{sl}_2, \mathrm{Sym}_2(\mathrm{Sym}_n(V)))\]
		and 
		\[\mathrm{Hom}_{\mathfrak{sl}_2}(\mathrm{Sym}_2(\mathrm{Sym}_n(V)^*), \mathfrak{sl}_2)\simeq \mathrm{Hom}_{\mathfrak{sl}_2}(\mathfrak{sl}_2, \mathrm{Sym}_2(\mathrm{Sym}_n(V))).\]
		We claim that the spaces in the first line are one-dimensional. Indeed, by Donkin-Mathieu theorem \cite[Proposition II.4.21]{jan}, the $\mathrm{SL}_2$-module $\mathrm{Sym}_n(V)^{\otimes 2}$ has a good filtration, and being its direct summand, $\mathrm{Sym}_2(\mathrm{Sym}_n(V))$ also does.
		Moreover, \cite[Proposition II.4.16(a)]{jan} implies that the dimension of $\mathrm{Hom}_{\mathrm{SL}_2}(\mathfrak{sl}_2, \mathrm{Sym}_2(\mathrm{Sym}_n(V)))$ equals the number of factors $H^0(2)$ in a good filtration of $\mathrm{Sym}_2(\mathrm{Sym}_n(V))$. The formal character of $\mathrm{Sym}_2(\mathrm{Sym}_n(V))$, as well as the formal characters of induced modules, do not depend on the characteristic of the ground field. Therefore,  $\dim \mathrm{Hom}_{\mathrm{SL}_2}(\mathfrak{sl}_2, \mathrm{Sym}_2(\mathrm{Sym}_n(V)))$ also does not. 
		Recall that
		\[\mathrm{Hom}_{\mathrm{SL}_2}(\mathfrak{sl}_2, \mathrm{Sym}_2(\mathrm{Sym}_n(V)))\subseteq \mathrm{Hom}_{\mathfrak{sl}_2}(\mathfrak{sl}_2, \mathrm{Sym}_2(\mathrm{Sym}_n(V))),\]
		with the equality, provided $\mathrm{char}\Bbbk=0$. Thus all we need is to prove that the space $ \mathrm{Hom}_{\mathfrak{sl}_2}(\mathfrak{sl}_2, \mathrm{Sym}_2(\mathrm{Sym}_n(V)))$ is always one-dimensional, whenever $\mathrm{char}\Bbbk=0$ or $n\leq \mathrm{char}\Bbbk=p>0$. 
		
		Since $\mathfrak{sl}_2\simeq V(2)$ is irreducible, the morphisms from $\mathrm{Hom}_{\mathfrak{sl}_2}(\mathfrak{sl}_2, \mathrm{Sym}_2(\mathrm{Sym}_n(V)))$ are 
		in one-to-one correspondence with the elements from $(\mathrm{Sym}_n(V)^{\otimes 2})^{<\tau>}$, those are killed by $E_{12}$ and of weight $2$. Any such element has a form \[z=\sum_{1\leq k\leq n} e_k s_k\otimes s_{n+1-k}.\] Then $E_{12}z=0$ implies 
		\[e_{k-1}(n-k+1)+e_k(k-1)=0, 1\leq k\leq n, \ \mbox{where} \ e_0=0.\]
		One can easily see that
		\[e_k=(-1)^{k-1}\left(\begin{array}{c}
			n-1 \\
			k-1
		\end{array}\right)e_1, \ 1\leq k\leq n,\]
		is the unique solution of this linear system of equations, provided $\mathrm{char}\Bbbk=0$ or $n\leq\mathrm{char}\Bbbk=p>0$. Finally, $z$ satisfies $\tau(z)=z$ if and only if $(-1)^{k-1}=(-1)^{n-k}$ for any $1\leq k\leq n$.  We conclude that $n-2k+1$ is even, and hence $n$ is odd. Proposition is proved.
	\end{proof}
	So, if $n$ is odd, there exists (nontrivial) $\mathrm{SL}_2$-equivariant symmetric bilinear map $\mathrm{Sym}_n(V)^*\times \mathrm{Sym}_n(V)^*\to\mathfrak{sl}_2$ and it is unique up to a scalar multiple. It remains to find its structural constants. 
	
	The identites $[E_{21}, [s^*_i, s^*_{n-i}]]=2a_i E_{21}$ are equivalent to 
	\[-(i+1)c_{i+1}-(n-i+1)c_i=2a_i, 0\leq i\leq n, \ \mbox{where} \ c_0=c_{n+1}=0.\]
	Similarly, the identities 
	\[[E_{12}, [s^*_i, s^*_{n-i}]]=-2a_i E_{12}, \  [E_{21}, [s^*_{j}, s^*_{n-1-j}]]=-b_j H, \ [E_{12}, [s^*_k, s^*_{n+1-k}]]=c_k H\]
	are equivalent to 
	\[-(n-i+1)b_{i-1}-(i+1)b_i=-2a_i, 0\leq i\leq n, \ \mbox{where} \ b_{-1}=b_n=0,\]
	\[-(j+1)a_{j+1}-(n-j)a_j=-b_j, 0\leq j\leq n-1, \]
	\[-(n-k+1)a_{k-1}-ka_k=c_k, 1\leq k\leq n.\]
	It is easy to check that this system of equations has the solution 
	\[a_i=\frac{(-1)^i}{2} (\left(\begin{array}{c}
		n-1 \\
		i
	\end{array}\right)-\left(\begin{array}{c}
		n-1 \\
		i-1
	\end{array}\right))a, \ 0\leq i\leq n,\]
	\[b_j=(-1)^j \left(\begin{array}{c}
		n-1 \\
		j
	\end{array}\right)a, \ 0\leq j\leq n-1, \]
	\[c_k=(-1)^k \left(\begin{array}{c}
		n-1 \\
		k-1
	\end{array}\right)a, \ 1\leq k\leq n, \ a\in\Bbbk.\]
	The supergroups whose Harish-Chandra pairs have the forms  $(\mathrm{SL}_2, \mathfrak{G}_{\bar 1})$, $(\mathrm{PSL}_2, \mathfrak{G}_{\bar 1})$ or $(\mathrm{GL}_2, \mathfrak{G}_{\bar 1})$, play a significant role in the study of the structure of quasi-reductive groups. Here we do not provide  more detailed comments, but only note that they appear as centralizers of certain tori in quasi-reductive supergroups. 
	
	The problem of complete classification of such supergroups seems too complicated for now. So, let us consider the particular case when $\mathfrak{G}_{\bar 1}$ is an irreducible
	$\mathrm{SL}_2$-module, say $\mathfrak{G}_{\bar 1}\simeq L(n)$. There is a surjective morphism $\mathrm{Sym}_n(V)^*\to L(n)^*\simeq L(n)$ of $\mathrm{SL}_2$-modules, that in turn determines on $(\mathrm{SL}_2, \mathrm{Sym}_n(V)^*)$ the structure of a Harish-Chandra pair. If the Lie bracket $\mathfrak{G}_{\bar 1}\times\mathfrak{G}_{\bar 1}\to\mathfrak{sl}_2$ is nonzero, then the induced Lie bracket $\mathrm{Sym}_n(V)^*\times \mathrm{Sym}_n(V)^*\to\mathfrak{sl}_2$ is nonzero as well, and it is completely described in Proposition \ref{the unique form}. 
	
	If $n=1$, then the above formulas infer 
	\[[s_0^*, s_1^*]=\frac{a}{2} H, \ [s_0^*, s_0^*]=aE_{12}, \ [s_1^*, s_1^*]=-aE_{21}, a\in \Bbbk\setminus 0.\]	
	Set $v=\alpha_0 s_0^*+\alpha_1 s^*_1$. Then $[v, v]=a(\alpha_0^2 E_{12}-\alpha_1^2 E_{21}+\alpha_0\alpha_1 H)$  and the straightforward computation shows that $[[v, v], v]=0$.
	
	Let $n>3$. Set $v=\alpha s_0^*+\beta s^*_{n-1}, \text{ where } \alpha, \beta\neq 0$. Then 
	\[[[v, v], v]=[2a\alpha\beta E_{12}, v]= -4a\alpha\beta^2 s^*_{n-2}\neq 0.\]
	It remains to consider the case $n=3$.
	
	Using the above formulas, we obtain
	\[[s^*_0, s^*_3]=\frac{a}{2}H, \  [s^*_1, s^*_2]=-\frac{a}{2}H,\, [s^*_0, s^*_2]=aE_{12}, \ [s^*_1, s^*_1]=-2aE_{12}, \]
	\[[s^*_1, s^*_3]=-aE_{21}, \ [s^*_2, s^*_2]=2aE_{21},\, [s^*_0, s^*_0]=[s^*_0, s^*_1]=[s^*_2, s^*_3]=[s^*_3, s^*_3]=0.\]
	Set $v=\sum_{0\leq i\leq 3}\alpha_i s^*_i$. For the sake of simplicity, let $a=1$. Then
	\[
	\begin{array}{lcl}
		[v, v] & = & -2\alpha_1^2 E_{12}+2\alpha_2^2 E_{21}+2\alpha_0\alpha_2 E_{12}+\alpha_0\alpha_3 H-\alpha_1\alpha_2 H-2\alpha_1\alpha_3 E_{21}\\[2mm]
		&=& (-2\alpha_1^2+2\alpha_0\alpha_2)E_{12}+(2\alpha_2^2-2\alpha_1\alpha_3)E_{21}+(-\alpha_1\alpha_2+\alpha_0\alpha_3)H\end{array}
	\]
	and  the straightforward computation shows that 
	\[
	\begin{array}{lcl}
		[[v, v], v] & = & (6\alpha_1^3-9\alpha_0\alpha_1\alpha_2+3\alpha_0^2\alpha_3)s_0^*+(3\alpha_1^2\alpha_2-6\alpha_0\alpha_2^2+3\alpha_0\alpha_1\alpha_3)s_1^*\\[2mm]
		&&+ (6\alpha_1^2\alpha_3-3\alpha_0\alpha_2\alpha_3-3\alpha_1\alpha_2^2)s_2^* +(-6\alpha_2^3+9\alpha_1\alpha_2\alpha_3-3\alpha_0\alpha_3^2)s_3^*.
	\end{array}\]
	Thus $[[v, v], v]=0$ for arbitrary $v$ if and only if $p=3$. 
	\begin{tr}\label{certain couples}
		Let $\mathbb{G}$ be a supergroup, such that $G\simeq\mathrm{SL}_2$. If $\mathbb{G}$ is not split and $\mathfrak{G}_{\bar 1}\simeq\mathrm{Sym}_n(V)^*$, then
		either $n=1$ and $\mathfrak{G}_{\bar 1}\simeq V^*\simeq V$, or $n=p=3$. In the latter case $\mathbb{G}$ is SAS, but its Lie superalgebra  is $\mathfrak{der}(\mathfrak{spo}(2|1))$ which is not simple.
		
		Finally, if $\mathbb{G}$ is not split and $\mathfrak{G}_{\bar 1}$ is an irreducible $G$-module, then
		$\mathfrak{G}_{\bar 1}$ is always isomorphic to $L(1)=V$. Moreover, in this case $\mathbb{G}\simeq \mathrm{SpO}(2|1)$. 
	\end{tr}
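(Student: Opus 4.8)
The plan is to treat the two claims in turn, using throughout that $\mathbb{G}$ not being split means $[\mathfrak{G}_{\bar 1},\mathfrak{G}_{\bar 1}]\neq 0$ (cf. \cite[Proposition 6.1]{bz}), i.e. that the restriction of the super-bracket to $\mathfrak{G}_{\bar 1}\times\mathfrak{G}_{\bar 1}\to\mathfrak{sl}_2$ is a nonzero $\mathrm{SL}_2$-equivariant symmetric bilinear map.

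Suppose first $\mathfrak{G}_{\bar 1}\simeq\mathrm{Sym}_n(V)^*$. By Proposition \ref{the unique form} the nonzero bracket forces $n$ odd and, up to a scalar $a$, is the one with the structure constants $a_i,b_j,c_k$ recorded above. Imposing the Harish-Chandra axiom $[[v,v],v]=0$ and reading off the three explicit evaluations preceding the theorem (the vanishing identity for $n=1$; the cubic expression for $n=3$, identically zero exactly when $p=3$; and $[[v,v],v]=-4a\alpha\beta^2 s^*_{n-2}\neq 0$ for $v=\alpha s^*_0+\beta s^*_{n-1}$ when $n>3$) leaves only $n=1$ or $n=p=3$. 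For $n=p=3$ I would first show $\mathbb{G}$ is SAS by Proposition \ref{SAPS in positive char}: $G\simeq\mathrm{SL}_2$ is SAS since $p\neq 2$, and conditions $(1)$--$(2)$ of Lemma \ref{a very particular case} hold because $\mathrm{Sym}_3(V)^*=V(3)$ has nontrivial simple head $L(3)\simeq L(1)^{[1]}$ (so no nonzero trivial quotient), its only nonzero proper $\mathrm{SL}_2$-submodule is the socle $\langle s^*_1,s^*_2\rangle\simeq L(1)$, and $[s^*_1,s^*_2]=-\frac{a}{2}H\neq 0$. Then, $\mathfrak{G}_{\bar 0}=\mathfrak{sl}_2$ being simple, Proposition \ref{Lie superalgebra of SAS}(1) gives $[\mathfrak{G},\mathfrak{G}]=\mathfrak{sl}_2\oplus[\mathfrak{sl}_2,\mathfrak{G}_{\bar 1}]$; a short computation in characteristic $3$ shows $[\mathfrak{sl}_2,\mathrm{Sym}_3(V)^*]=\langle s^*_1,s^*_2\rangle$, so $[\mathfrak{G},\mathfrak{G}]$ is $(3|2)$-dimensional, with even part $\mathfrak{sl}_2$, odd part $V$, and restricted bracket nonzero, hence (Proposition \ref{the unique form}) that of $\mathfrak{spo}(2|1)$ up to a scalar; thus $[\mathfrak{G},\mathfrak{G}]\simeq\mathfrak{spo}(2|1)$. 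Since $\mathrm{Sym}_3(V)^*$ has no nonzero $\mathfrak{sl}_2$-invariants, the adjoint action embeds $\mathfrak{G}$ into $\mathfrak{der}(\mathfrak{spo}(2|1))$; comparing with the known $(3|4)$-dimensional structure of the latter in characteristic $3$ yields $\mathfrak{G}\simeq\mathfrak{der}(\mathfrak{spo}(2|1))$, which is not simple as $\mathfrak{G}/[\mathfrak{G},\mathfrak{G}]\simeq L(3)\neq 0$.

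Now suppose $\mathfrak{G}_{\bar 1}$ is an irreducible $G$-module, say $\mathfrak{G}_{\bar 1}\simeq L(n)$; necessarily $n\geq 1$, else the bracket lands in $\mathfrak{sl}_2^{\mathrm{SL}_2}=0$ and $\mathbb{G}$ is split. The same verification as above shows $\mathbb{G}$ is SAS, so Proposition \ref{Lie superalgebra of SAS}(3) (its ``equivalently'' clause) forces $\mathfrak{G}$ to be simple, since $\mathfrak{G}_{\bar 1}/[\mathfrak{G}_{\bar 0},\mathfrak{G}_{\bar 1}]=0$. If $n<p$ or $p=0$, then $L(n)=\mathrm{Sym}_n(V)^*$ and the previous case gives $n=1$. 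To rule out $n\geq p>0$, I would write $L(n)=L(n_0)\otimes L(\lfloor n/p\rfloor)^{[1]}$ with $0\leq n_0<p$ by Steinberg's tensor product theorem. If $p\mid n$, every composition factor of $\mathrm{Sym}_2(L(n))$ has highest weight divisible by $p$ whereas $\mathfrak{sl}_2\simeq L(2)$ does not, so there is no nonzero equivariant bracket --- contradiction; hence $n_0\geq 1$, and $d:=\dim L(\lfloor n/p\rfloor)\geq 2$, with $\mathfrak{sl}_2$ acting on $\mathfrak{G}_{\bar 1}\simeq L(n_0)^{\oplus d}$ through the first tensor factor only. For $n_0\geq 2$ I would pull the bracket back along $\mathrm{Sym}_n(V)^*\twoheadrightarrow L(n)$: by Proposition \ref{the unique form} this is the unique $\mathrm{SL}_2$-equivariant bracket on $\mathrm{Sym}_n(V)^*$, for which $[[v,v],v]=-4a\alpha\beta^2 s^*_{n-2}$; but the weight $4-n$ of $s^*_{n-2}$ occurs in $L(n)$ when $n_0\geq 2$, so $s^*_{n-2}$ is not in the radical of $V(n)$, contradicting the Harish-Chandra axiom for $L(n)$. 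For $n_0=1$ I would use that any $\mathrm{SL}_2$-equivariant symmetric bracket on $V\otimes L(\lfloor n/p\rfloor)^{[1]}$ has the form $\beta\otimes b_0$, with $b_0$ the $\mathfrak{spo}(2|1)$-bracket on $V$ and $\beta$ a symmetric $\mathrm{SL}_2$-invariant form on $L(\lfloor n/p\rfloor)^{[1]}$; evaluating $[[v,v],v]$ on $v=e\otimes\alpha+f\otimes\gamma$ (with $e,f$ a weight basis of $V$, $\alpha,\gamma\in\Bbbk^d$) and using $d\geq 2$ forces $\beta=0$, again a contradiction. Hence $n=1$.

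Finally, when $n=1$ the bracket $\mathrm{Sym}_2(V)\to\mathfrak{sl}_2$ is a nonzero vector in the one-dimensional space of Proposition \ref{the unique form}; rescaling $\mathfrak{G}_{\bar 1}$ by $\lambda$ rescales the bracket by $\lambda^2$, and every nonzero scalar of $\Bbbk$ is a square, so $(\mathrm{SL}_2,V)$ is the unique such Harish-Chandra pair; since $\mathrm{SpO}(2|1)$ realizes it, the equivalence between algebraic supergroups and Harish-Chandra pairs gives $\mathbb{G}\simeq\mathrm{SpO}(2|1)$. I expect the exclusion of $n\geq p$ in the irreducible case to be the main obstacle --- it combines Steinberg's theorem with a precise comparison of the weight multiplicities of $L(n)$ and of the radical of the Weyl module $V(n)$ (for $n_0\geq 2$) and a separate triple-product computation over $\mathfrak{sl}_2$ (for $n_0=1$) --- together with the characteristic-$3$ description of $\mathfrak{der}(\mathfrak{spo}(2|1))$ needed for the identification in the first case.
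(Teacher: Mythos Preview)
Your treatment of the first claim mirrors the paper's: you invoke the same uniqueness (Proposition~\ref{the unique form}) and the same three explicit $[[v,v],v]$-computations to pin down $n\in\{1\}\cup\{3\text{ with }p=3\}$, and you verify SAS for $n=p=3$ via the same conditions~(1)--(2) of Lemma~\ref{a very particular case}. You also supply the identification $\mathfrak{G}\simeq\mathfrak{der}(\mathfrak{spo}(2|1))$ (faithful adjoint action on $[\mathfrak{G},\mathfrak{G}]\simeq\mathfrak{spo}(2|1)$ plus a dimension count), which the paper only asserts.

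For the irreducible case your route is genuinely different from the paper's and, in one respect, more careful. The paper argues that the surjection $\mathrm{Sym}_n(V)^*\twoheadrightarrow L(n)$ ``determines on $(\mathrm{SL}_2,\mathrm{Sym}_n(V)^*)$ the structure of a Harish-Chandra pair'', so that $\mathbb{G}\simeq\mathbb{H}/\mathbb{R}$ with $\mathbb{H}$ non-split; then the first part forces $n=p=3$, and one checks that the subpair $(e,[\mathfrak{H}_{\bar 0},\mathfrak{H}_{\bar 1}])$ violates the normality criterion since $[\mathfrak{H}_{\bar 1},[\mathfrak{H}_{\bar 0},\mathfrak{H}_{\bar 1}]]=\mathfrak{sl}_2\neq 0$. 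The step that is not justified is why the pulled-back bracket satisfies $[[v,v],v]=0$ on all of $\mathrm{Sym}_n(V)^*$ rather than merely modulo the radical --- the Harish-Chandra axiom on $L(n)$ only yields $\pi([[v,v],v])=0$. Your approach sidesteps this entirely: you case-split via Steinberg's tensor product theorem into $p\mid n$ (no bracket, as every composition factor of $\mathrm{Sym}_2(L(n))$ is a Frobenius twist), $n_0\geq 2$ (the weight $4-n=(4-n_0)+p(-m)$ survives in $L(n_0)\otimes L(m)^{[1]}$, so $\pi(s^*_{n-2})\neq 0$ and the paper's $n>3$ computation already contradicts the axiom on $L(n)$), and $n_0=1$ (the $G_1$-triviality of $L(m)^{[1]}$ forces the bracket to be $b_0\otimes\beta$ with $\beta$ an invariant symmetric form, and your triple-product on $e\otimes\alpha+f\otimes\gamma$ kills $\beta$ once $\dim L(m)\geq 2$). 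This is longer but each reduction is honest; what it buys is that you never need the unjustified lift to a Harish-Chandra pair on the Weyl module. The two routes agree on the final identification with $\mathrm{SpO}(2|1)$ via uniqueness of the bracket up to a square scalar.
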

	\begin{proof}
		The first statement is already proved. To prove the second one we need to show that $\mathbb{G}$ satisfies the conditions $(1-2)$ from Lemma \ref{a very particular case}. The composition factors of $\mathfrak{G}_{\bar 1}=\mathrm{Sym}_3(V)^*$ are
		$L(3)\simeq L(1)^{[1]}$ and $L(1)$, hence the condition $(1)$ of Lemma \ref{a very particular case} follows. Next, any nonzero $G$-submodule of $\mathfrak{G}_{\bar 1}$ contains its socle
		$[\mathfrak{G}_{\bar 0}, \mathfrak{G}_{\bar 1}]=\Bbbk s^*_1+\Bbbk s^*_2$, isomorphic to $L(1)$. Since $[\mathfrak{G}_{\bar 1}, [\mathfrak{G}_{\bar 0}, \mathfrak{G}_{\bar 1}]]=\mathfrak{sl}_2$, the condition $(2)$ from Lemma \ref{a very particular case} follows as well. It remains to observe that $[\mathfrak{G}, \mathfrak{G}]=\mathfrak{G}_{\bar 0}\oplus [\mathfrak{G}_{\bar 0}, \mathfrak{G}_{\bar 1}]$ is a proper superideal in $\mathfrak{G}$.
		
		Let $\mathfrak{G}_{\bar 1}\simeq L(n)$ and $n\neq 1$. Arguing as above, we derive that $\mathbb{G}\simeq\mathbb{H}/\mathbb{R}$, where $\mathbb{H}$ 
		corresponds to the non-split Harish-Chandra pair $(\mathrm{SL}_2, \mathrm{Sym}_n(V)^*)$, hence $n=p=3$,  and $\mathbb{R}$ corresponds to its subpair $(e, [\mathfrak{H}_{\bar 0}, \mathfrak{H}_{\bar 1}])$. Since $[\mathfrak{H}_{\bar 1}, [\mathfrak{H}_{\bar 0}, \mathfrak{H}_{\bar 1}]]\neq 0$, the normality criterion drives to contradiction. Therefore, $\mathfrak{G}_{\bar 1}\simeq L(n)$ only when $n=1$.
		
		Finally, $\mathrm{SpO}(2|1)_{ev}\simeq\mathrm{Sp}_2\simeq\mathrm{SL}_2$ and $\mathfrak{spo}(2|1)_{\bar 1}$ is obviously isomorphic to $L(1)$ (see below).  Proposition \ref{the unique form}
		concludes the proof.  
	\end{proof}	
	\subsection{About other alternatives from Proposition \ref{Lie algebra of SAS}}
	We say that a supergroup $\mathbb{G}$ satisfies an alternative from Proposition \ref{Lie algebra of SAS}, whenever $G$ does. 	
	
	The arguments of Remark \ref{Lie algebra of SAS, an example} and Proposition \ref{Lie algebra of SAS} show that a SAS-group $G$ satisfies the alternative $(2)$ if and only if
	it contains a central subgroup $D\simeq\mu_n$ with $p|n$. 
	In particular, we have $\mathfrak{Z}=\mathrm{Z}(\mathrm{Lie}(G))=\mathrm{Lie}(D)=\Bbbk z$. 
	Let $X(D)\simeq\mathbb{Z}_n$ be the character group of $D$. 
	\begin{pr}\label{SAS with small square of odd component}
		Let $\mathbb{G}$ be a supergroup, which satisfies the alternative $(2)$ from Proposition \ref{Lie algebra of SAS} and $[\mathfrak{G}_{\bar 1}, \mathfrak{G}_{\bar 1}]=\mathfrak{Z}$. In particular, $\mathfrak{Z}\oplus \mathfrak{G}_{\bar 1}$ is a proper superideal and therefore, $\mathfrak{G}$ is not simple. Then $\mathbb{G}$ is SAS if and only if :
		\begin{enumerate}
			\item $\mathfrak{G}_{\bar 1}\simeq\mathfrak{G}_{\bar 1}^*$ (as a $G$-module) so that \[[v, w]=\langle v, w \rangle z, \text{ for all } v, w\in \mathfrak{G}_{\bar 1},\] where $\langle \ , \ \rangle $ is the nondegenerate symmetric bilinear $G$-invariant form on $\mathfrak{G}_{\bar 1}$, naturally determined by this isomorphism; 
			\item The subgroup $\mu_p\leq D$ acts trivially on $\mathfrak{G}_{\bar 1}$; 	
			\item $\mathfrak{G}_{\bar 1}^G =0$.
		\end{enumerate}
	\end{pr}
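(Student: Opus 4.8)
\emph{Plan.} The plan is to reduce the whole statement to the criterion of Proposition~\ref{SAPS in positive char}. By hypothesis $G$ is a SAS-group containing a central $D\simeq\mu_n$ with $p\mid n$ and $\mathfrak{Z}=\mathrm{Lie}(D)=\Bbbk z$; as $G$ is non-commutative it is not solvable, hence neither is $\mathbb{G}$, and since $\mathcal{D}(G)=G$ the group $G$ has no non-trivial characters, so its adjoint action on the one-dimensional space $\mathfrak{Z}$ is trivial, i.e.\ $z$ is $G$-fixed. Therefore the Harish--Chandra bracket $\mathfrak{G}_{\bar 1}\times\mathfrak{G}_{\bar 1}\to\mathfrak{G}_{\bar 0}$ takes the form $[v,w]=\beta(v,w)\,z$ for a non-zero symmetric bilinear form $\beta$ on $\mathfrak{G}_{\bar 1}$, and $\beta$ is $G$-invariant by the $G$-equivariance of the bracket together with $z$ being fixed. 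The ``in particular'' clause is then immediate: $[\mathfrak{G},\mathfrak{Z}\oplus\mathfrak{G}_{\bar 1}]\subseteq\mathfrak{G}_{\bar 1}\oplus\mathfrak{Z}$, while $\mathfrak{G}_{\bar 0}/\mathfrak{Z}$ is simple (in particular $\ne 0$), so $\mathfrak{Z}\oplus\mathfrak{G}_{\bar 1}$ is a proper superideal and $\mathfrak{G}$ is not simple. By Proposition~\ref{SAPS in positive char}, $\mathbb{G}$ is SAS if and only if it satisfies conditions~$(1)$ and $(2)$ of Lemma~\ref{a very particular case}, so the task is to match these against conditions $(1)$ and $(3)$ of the present Proposition.

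For condition~$(2)$ of Lemma~\ref{a very particular case}: a $G$-submodule $\mathfrak{W}\subseteq\mathfrak{G}_{\bar 1}$ satisfies $[\mathfrak{G}_{\bar 1},\mathfrak{W}]=0$ exactly when $\mathfrak{W}$ is contained in the radical of $\beta$, and that radical is itself a $G$-submodule because $\beta$ is $G$-invariant; hence the non-existence of a non-zero such $\mathfrak{W}$ is equivalent to $\beta$ being non-degenerate, i.e.\ to $\beta$ inducing a $G$-module isomorphism $\mathfrak{G}_{\bar 1}\xrightarrow{\ \sim\ }\mathfrak{G}_{\bar 1}^{*}$, which (with $\langle\ ,\ \rangle=\beta$) is precisely condition~$(1)$. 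Granting this isomorphism, condition~$(1)$ of Lemma~\ref{a very particular case}, namely $(G-1)\mathfrak{G}_{\bar 1}=\mathfrak{G}_{\bar 1}$, says the largest trivial quotient $\mathfrak{G}_{\bar 1}/(G-1)\mathfrak{G}_{\bar 1}$ is zero; dualizing, its dual is the largest trivial submodule of $\mathfrak{G}_{\bar 1}^{*}$, so the condition becomes $(\mathfrak{G}_{\bar 1}^{*})^{G}=0$, and transporting this along $\mathfrak{G}_{\bar 1}^{*}\simeq\mathfrak{G}_{\bar 1}$ it reads $\mathfrak{G}_{\bar 1}^{G}=0$, which is condition~$(3)$. (This invariants/coinvariants duality holds over $\Bbbk$ with no need of a Reynolds operator.) At this point ``$\mathbb{G}$ is SAS $\iff$ $(1)$ and $(3)$ hold'' is established, so --- granting that condition~$(2)$ of the Proposition follows from condition~$(1)$, which I verify next --- the full equivalence is proved.

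It remains to check that implication. Decompose $\mathfrak{G}_{\bar 1}=\bigoplus_{\lambda\in X(D)}\mathfrak{G}_{\bar 1}^{\lambda}$ into $D$-weight spaces, with $X(D)\simeq\mathbb{Z}_n$. Since $D$ is central and $z$ is $G$-fixed, $G$-invariance of $\beta$ gives $\beta(\mathfrak{G}_{\bar 1}^{\lambda},\mathfrak{G}_{\bar 1}^{\mu})=0$ unless $\lambda+\mu=0$; thus, $\beta$ being non-degenerate, $\mathfrak{G}_{\bar 1}^{\lambda}$ and $\mathfrak{G}_{\bar 1}^{-\lambda}$ are perfectly paired. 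Moreover $z$ acts on $\mathfrak{G}_{\bar 1}^{\lambda}$ by a scalar which is zero precisely when $\mu_p\le D$ acts trivially on $\mathfrak{G}_{\bar 1}^{\lambda}$, i.e.\ precisely when $\lambda\equiv 0\pmod p$. The Harish--Chandra axiom reads $[[v,v],v]=\beta(v,v)\,[z,v]=0$ for all $v\in\mathfrak{G}_{\bar 1}$. Were there a weight $\lambda_0$ with $\lambda_0\not\equiv 0\pmod p$, then $\lambda_0\ne-\lambda_0$ in $X(D)$ --- otherwise $2\lambda_0\equiv 0\pmod n$, hence $2\lambda_0\equiv 0\pmod p$, so $\lambda_0\equiv 0\pmod p$ since $p$ is odd --- and, choosing $v=v_{\lambda_0}+v_{-\lambda_0}$ with $\beta(v_{\lambda_0},v_{-\lambda_0})\ne 0$ (possible by the perfect pairing), one gets $\beta(v,v)=2\beta(v_{\lambda_0},v_{-\lambda_0})\ne 0$ (here $\mathrm{char}\,\Bbbk\ne 2$) while $[z,v]$ is a non-zero scalar multiple of $v_{\lambda_0}-v_{-\lambda_0}\ne 0$, contradicting the axiom. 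Hence $\mu_p$ acts trivially on $\mathfrak{G}_{\bar 1}$, which is condition~$(2)$. The main obstacle is really just this last weight-space bookkeeping, together with keeping straight the three elementary facts used above --- that the radical of $\beta$ is a submodule, that non-degeneracy pairs the $\lambda$- and $(-\lambda)$-weight spaces, and that a module has no trivial quotient iff its dual has no trivial submodule --- and invoking the hypotheses $\mathrm{char}\,\Bbbk\ne 2$ and $p$ odd at exactly the right places.
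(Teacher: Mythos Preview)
Your proof is correct and follows essentially the same route as the paper: reduce to Proposition~\ref{SAPS in positive char}, identify condition~$(1)$ with non-degeneracy of the bracket form (i.e.\ with condition~$(2)$ of Lemma~\ref{a very particular case}), identify condition~$(3)$ with $(G-1)\mathfrak{G}_{\bar 1}=\mathfrak{G}_{\bar 1}$ via self-duality, and extract condition~$(2)$ from the Harish--Chandra axiom applied to a sum of vectors in opposite $D$-weight spaces.

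Two small points where your write-up is actually tidier than the paper's. First, you observe explicitly that condition~$(2)$ is forced by condition~$(1)$ once $\mathbb{G}$ is assumed to be a supergroup; the paper's closing sentence (``conditions~$(1)$ and $(2)$ guarantee that $(G,\mathfrak{G}_{\bar 1})$ is a Harish--Chandra pair'') reads as though $(2)$ were needed to verify the axiom $[[v,v],v]=0$, which is moot under the stated hypotheses. Second, in the weight computation you check that $\lambda_0\not\equiv 0\pmod p$ forces $\lambda_0\neq -\lambda_0$ in $X(D)$ (using $p$ odd), so that $\beta(v,v)=2\beta(v_{\lambda_0},v_{-\lambda_0})$ really holds; the paper writes $\langle v,v\rangle=2\langle x,y\rangle$ without comment, implicitly assuming the two weight spaces are distinct.
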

	\begin{proof}
		If $V$ is a subspace of $\mathfrak{G}_{\bar 1}$, such that $[\mathfrak{G}_{\bar 1}, V]=0$, then again $\mathfrak{V}=\sum_{g\in G(\Bbbk)} gV$ is a $G$-submodule and $[\mathfrak{G}_{\bar 1}, \mathfrak{V}]=0$
		implies $V\subseteq\mathfrak{V}=0$.  Thus, the first condition obviously follows.
		
		Next, there is the weight decomposition $\mathfrak{G}_{\bar 1}=\oplus_{h\in X(D)} (\mathfrak{G}_{\bar 1})_h$, where each term $(\mathfrak{G}_{\bar 1})_h$ is a $G$-submodule of $\mathfrak{G}_{\bar 1}$. Since $\langle \, , \, \rangle$ is nondegenerate, 	then $(\mathfrak{G}_{\bar 1})_h\neq 0$ implies that  $[(\mathfrak{G}_{\bar 1})_h, (\mathfrak{G}_{\bar 1})_g]\neq 0$ if and only if $g=h^{-1}$. In particular, $(\mathfrak{G}_{\bar 1})_h^*\simeq (\mathfrak{G}_{\bar 1})_{h^{-1}}$ for any $h\in X(D)$.
		
		Choose $x\in (\mathfrak{G}_{\bar 1})_h\setminus 0, y\in (\mathfrak{G}_{\bar 1})_{h^{-1}}\setminus 0$, such that $\langle x, y \rangle \neq 0$, and set $v=x+y$. Then 
		\[[[v, v], v]=\langle v, v \rangle [z, v]=2\langle x, y \rangle [z, v]=2\langle x, y\rangle (hx-hy)=0\] implies $h\in p\mathbb{Z}_n$, or equivalently, the subgroup $\mu_p\leq D$ acts trivially on $\mathfrak{G}_{\bar 1}$.
		Since $\mathfrak{G}_{\bar 1}$ is self-dual, the condition $\mathfrak{G}_{\bar 1}=(G-1)\mathfrak{G}_{\bar 1}$ is equivalent to $\mathfrak{G}_{\bar 1}^G=0$. 
		
		To complete the proof just note that the conditions $(1)$ and $(2)$ guarantee that $(G, \mathfrak{G}_{\bar 1})$ is a Harish-Chandra pair, and the corresponding supergroup satisfies the condition $(2)$ from Lemma \ref{a very particular case}. 
	\end{proof} 
	
	\begin{ex}\label{2.18}{\em 
			Set $G=\mathrm{SL}_n$ and $p|n$. Let $V=L(p\lambda)\oplus L(p\lambda)^*$, where $\lambda$ is a nonzero dominant weight. Then $V\simeq V^*$ and the map $(v, w)\mapsto \langle v, w \rangle I_n, v, w\in V,$ determines the structure of Harish-Chandra pair on $(\mathrm{SL}_n, V)$, such that the corresponding supergroup is SAS. }
	\end{ex}  
	\begin{rem}\label{curious property}{\em 
			The SAS-supergroup $\mathbb{G}$ from Proposition \ref{SAS with small square of odd component} has the following curious properties. First, $\mathbb{G}$ contains the central (super)subgroup $\mu_p$, so that $\mathbb{G}/\mu_p$ is split, hence it is not SAS. On the other hand, the subpair $(D, \mathfrak{G}_{\bar 1})$ corresponds to a normal supersubgroup $\mathbb{H}$ in $\mathbb{G}$, such that $\mathbb{G}/\mathbb{H}\simeq G/D$ is a SAS-group. Furthermore, \cite[Corollary 6.3]{maszub1} implies that $\mathbb{H}$ is finite and solvable, but it is semisimple! Indeed, if $(S, W)$ is the Harish-Chandra pair corresponding to the solvable radical of $\mathbb{H}$, then $S=e$ by Lemma \ref{Lie superalgebra is trivial}. Thus $[\mathfrak{G}_{\bar 1}, W]=0$ by normality criterion, that contradicts to the non-degeneracy of $[ \ , \ ]$.} 
	\end{rem}
	\begin{pr}\label{alternative 2}
		Let $\mathbb{G}$ be a SAS-supergroup, which satisfies the alternative $(2)$ from Proposition \ref{Lie algebra of SAS}. Assume also that $[\mathfrak{G}_{\bar 1}, \mathfrak{G}_{\bar 1}]=\mathfrak{G}_{\bar 0}$. Then one of the following statements take place :
		\begin{enumerate}
			\item Any nonzero superideal of $\mathfrak{G}$ contains $[\mathfrak{G}, \mathfrak{G}]=\mathfrak{G}_{\bar 0}\oplus [\mathfrak{G}_{\bar 0}, \mathfrak{G}_{\bar 1}]$;
			\item There is a finite normal supersubgroup $\mathbb{H}$, represented by a subpair $(\mu_{n'}, \mathfrak{H}_{\bar 1})$, such that
			$p|n'$	and $\mathbb{G}/\mathbb{H}$ is  not purely even and also a SAS-supergroup satisfying the alternative $(3)$.
		\end{enumerate}
	\end{pr}
	
	\begin{proof}
		Let $\mathfrak{J}$ denotes $\mathfrak{G}_{\bar 0}\oplus [\mathfrak{G}_{\bar 0}, \mathfrak{G}_{\bar 1}]$. Let $\mathfrak{I}$ be a nonzero superideal in $\mathfrak{G}$. It has been already observed that $\mathfrak{I}_{\bar 0}\neq 0$. Then either $\mathfrak{I}_{\bar 0}=\mathfrak{G}_{\bar 0}$, hence $\mathfrak{I}$ containes $\mathfrak{J}$, or $\mathfrak{I}_{\bar 0}=\mathfrak{Z}$. In the latter case $\mathfrak{I}_{\bar 1}\neq \mathfrak{G}_{\bar 1}$. Without loss of generality, one can assume that $G\mathfrak{I}_{\bar 1}\subseteq \mathfrak{I}_{\bar 1}$ and $\mathfrak{I}_{\bar 1}$ is the largest $G$-submodule of $\mathfrak{G}_{\bar 1}$ with $[\mathfrak{G}_{\bar 1}, \mathfrak{I}_{\bar 1}]\subseteq\mathfrak{Z}$. Set $R=\ker (G\to \mathrm{GL}(\mathfrak{G}_{\bar 1}/\mathfrak{I}_{\bar 1}))$. Then $\mathrm{Lie}(R)=\{x\in \mathfrak{G}_{\bar 0}\mid [x, \mathfrak{G}_{\bar 1}]\subseteq \mathfrak{I}_{\bar 1}\}$ contains $\mathfrak{I}_{\bar 0}=\mathfrak{Z}$. Thus $R\cap D\simeq\mu_{n'}$, where $p|n' , n'|n$, and $(\mu_{n'}, \mathfrak{I}_{\bar 1})$ is a subpair of $(G, \mathfrak{G}_{\bar 1})$ corresponding to a normal supersubgroup $\mathbb{H}$ of $\mathbb{G}$.  
		
		The supergroup $\mathbb{H}$ is finite and solvable by \cite[Corollary 6.3]{maszub1}. By Lemma \ref{if quotient is SAPS} the supergroup $\mathbb{G}/\mathbb{H}$ is SAPS.
		Let $\mathbb{R}$ denote the preimage of $\mathrm{R}(\mathbb{G}/\mathbb{H})\neq e$ in $\mathbb{G}$. If $\mathbb{R}$ is infinite, then $\mathbb{R}=\mathbb{G}$ and thus $\mathbb{G}$ is solvable, a contradiction. Finally, if $\mathbb{R}$ is finite, then 
		$\mathrm{R}(\mathbb{G}/\mathbb{H})$ is purely odd, which implies that there is a $G$-submodule $\mathfrak{V}$ in $\mathfrak{G}_{\bar 1}$, such that $\mathfrak{I}_{\bar 1}$ is properly contained in $\mathfrak{V}$ and $[\mathfrak{G}_{\bar 1}, \mathfrak{V}]\subseteq \mathfrak{Z}$. The latter contradicts to the maximality of $\mathfrak{I}_{\bar 1}$. 
		To complete the proof just note that $G\to G/H$ is an inseparable isogeny, similar to that discussed in Remark \ref{Lie algebra of SAS, an example}. 
	\end{proof}
	\begin{pr}\label{alternative 3}
		Let $\mathbb{G}$ be a SAS-supergroup, which satisfies the alternative $(3)$ from Proposition \ref{Lie algebra of SAS}. Then one of the following statements takes place :
		\begin{enumerate}
			\item The odd component of arbitrary nonzero superideal of $\mathfrak{G}$ contains $[\mathfrak{G}_{\bar 0}, \mathfrak{G}_{\bar 1}]$;
			\item There is a finite normal supersubgroup $\mathbb{R}$ of $\mathbb{G}$, such that $\mathfrak{R}_{\bar 0}=[\mathfrak{G}_{\bar 0}, \mathfrak{G}_{\bar 0}]$ and $\mathbb{G}/\mathbb{R}$ is not purely even and also a SAS-supergroup, which satisfies the alternative $(2)$ from Proposition \ref{Lie algebra of SAS}.
		\end{enumerate}
	\end{pr}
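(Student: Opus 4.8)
The plan is to mimic the proof of Proposition \ref{alternative 2}, with $\mathfrak{M}:=[\mathfrak{G}_{\bar 0},\mathfrak{G}_{\bar 0}]$ now playing the role of the one-dimensional center $\mathfrak{Z}$ there. First I would record that, since the three alternatives of Proposition \ref{Lie algebra of SAS} are mutually exclusive, alternative $(3)$ forces $\mathrm{Z}(\mathfrak{G}_{\bar 0})=0$ (otherwise $\mathfrak{G}_{\bar 0}=\mathfrak{M}\oplus\mathrm{Z}(\mathfrak{G}_{\bar 0})$ would satisfy $(2)$), so the only ideals of $\mathfrak{G}_{\bar 0}$ are $0$, $\mathfrak{M}$ and $\mathfrak{G}_{\bar 0}$ — an ideal $\mathfrak{J}$ with $\mathfrak{J}\cap\mathfrak{M}=0$ has $[\mathfrak{G}_{\bar 0},\mathfrak{J}]\subseteq\mathfrak{J}\cap\mathfrak{M}=0$ and thus lies in the center. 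If $\mathbb{G}$ is purely even, $(1)$ is vacuous; otherwise $\mathbb{G}$ is not split, so $[\mathfrak{G}_{\bar 1},\mathfrak{G}_{\bar 1}]\neq 0$ and hence $[\mathfrak{G}_{\bar 1},\mathfrak{G}_{\bar 1}]\in\{\mathfrak{M},\mathfrak{G}_{\bar 0}\}$. Next, for a nonzero superideal $\mathfrak{I}$ I would argue as in Proposition \ref{Lie superalgebra of SAS} that $\mathfrak{I}_{\bar 0}\neq 0$: if $\mathfrak{I}_{\bar 0}=0$ then $[\mathfrak{G}_{\bar 1},\mathfrak{I}_{\bar 1}]=0$, and since $G$ is smooth the $G$-submodule $\sum_{g\in G(\Bbbk)}g\mathfrak{I}_{\bar 1}$ again has zero bracket with $\mathfrak{G}_{\bar 1}$, contradicting condition $(2)$ of Lemma \ref{a very particular case} through Proposition \ref{SAPS in positive char}. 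Thus $\mathfrak{I}_{\bar 0}\in\{\mathfrak{M},\mathfrak{G}_{\bar 0}\}$, and when $\mathfrak{I}_{\bar 0}=\mathfrak{G}_{\bar 0}$ one gets $[\mathfrak{G}_{\bar 0},\mathfrak{G}_{\bar 1}]=[\mathfrak{G}_{\bar 1},\mathfrak{I}_{\bar 0}]\subseteq\mathfrak{I}_{\bar 1}$. So if \emph{every} nonzero superideal has even part $\mathfrak{G}_{\bar 0}$ we are in case $(1)$; otherwise I fix a nonzero superideal $\mathfrak{I}$ with $\mathfrak{I}_{\bar 0}=\mathfrak{M}$ (note then $[\mathfrak{G}_{\bar 1},\mathfrak{I}_{\bar 1}]\subseteq\mathfrak{M}$ and $[\mathfrak{M},\mathfrak{G}_{\bar 1}]\subseteq\mathfrak{I}_{\bar 1}$) and build $\mathbb{R}$.

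For the even part of $\mathbb{R}$ I would take the height-$\le 1$ infinitesimal normal subgroup scheme $R\trianglelefteq G$ attached to the restricted, $\mathrm{Ad}(G)$-stable ideal $\mathfrak{M}\subseteq\mathrm{Lie}(G)$ (it is $\mathrm{Ad}(G)$-stable because $G$ acts by Lie superalgebra automorphisms). Concretely, $R$ is the image of the first Frobenius kernel $\widehat{G}_1$ under the inseparable central isogeny $\pi\colon\widehat{G}\to G$ produced in the proof of Proposition \ref{Lie algebra of SAS}, where $\widehat{G}$ is smooth connected, satisfies alternative $(2)$, and $\ker\pi\simeq\mu_\ell$ is central with $\mathrm{Lie}(\mu_\ell)=\mathrm{Z}(\mathrm{Lie}(\widehat{G}))$ (so $p\mid\ell$ and $\mathrm{Lie}(\widehat{G})/\mathrm{Z}(\mathrm{Lie}(\widehat{G}))\simeq\mathfrak{M}$); since $\widehat{G}_1\cap\ker\pi\simeq\mu_p$, the group $R\simeq\widehat{G}_1/\mu_p$ is finite of order $p^{\dim\mathfrak{M}}$ and $\mathrm{Lie}(R)=\mathfrak{M}$. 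For the odd part I would set $\mathfrak{R}_{\bar 1}=\sum_{g\in G(\Bbbk)}g\mathfrak{I}_{\bar 1}$: this is a $G$-submodule with $[\mathfrak{G}_{\bar 1},\mathfrak{R}_{\bar 1}]\subseteq\mathfrak{M}$ (by $G$-equivariance of the bracket and $\mathrm{Ad}(G)$-stability of $\mathfrak{M}$), and $R$ acts trivially on $\mathfrak{G}_{\bar 1}/\mathfrak{R}_{\bar 1}$ (because $R$ has height $\le 1$ and $[\mathfrak{M},\mathfrak{G}_{\bar 1}]\subseteq\mathfrak{I}_{\bar 1}\subseteq\mathfrak{R}_{\bar 1}$). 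Hence the normality criterion holds and $(R,\mathfrak{R}_{\bar 1})$ represents a finite normal supersubgroup $\mathbb{R}\trianglelefteq\mathbb{G}$ with $\mathfrak{R}_{\bar 0}=\mathrm{Lie}(R)=[\mathfrak{G}_{\bar 0},\mathfrak{G}_{\bar 0}]$, as required.

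Finally I would verify the three properties of $\mathbb{G}/\mathbb{R}$. It is not purely even as soon as $\mathfrak{R}_{\bar 1}\neq\mathfrak{G}_{\bar 1}$, which is automatic unless $[\mathfrak{G}_{\bar 1},\mathfrak{G}_{\bar 1}]=\mathfrak{M}$; in that degenerate subcase one has either $[\mathfrak{G}_{\bar 0},\mathfrak{G}_{\bar 1}]\subsetneq\mathfrak{G}_{\bar 1}$, and replaces $\mathfrak{R}_{\bar 1}$ by $[\mathfrak{G}_{\bar 0},\mathfrak{G}_{\bar 1}]$ (which still verifies the normality criterion for $R$), or $[\mathfrak{G}_{\bar 0},\mathfrak{G}_{\bar 1}]=\mathfrak{G}_{\bar 1}=[\mathfrak{G}_{\bar 1},\mathfrak{M}]$, in which case every superideal with even part $\mathfrak{M}$ has odd part $\mathfrak{G}_{\bar 1}$ and statement $(1)$ holds after all. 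Assuming $\mathfrak{R}_{\bar 1}\subsetneq\mathfrak{G}_{\bar 1}$: $\mathbb{G}$ is not solvable (it is noncommutative, hence non-solvable by \cite[Corollary 6.4]{maszub1}), so $\mathbb{G}/\mathbb{R}$ is SAPS by Lemma \ref{if quotient is SAPS}, and it is semisimple because the preimage of its solvable radical is a proper, hence finite, normal supersubgroup of $\mathbb{G}$, which is purely odd by Lemma \ref{Lie superalgebra is trivial} and therefore trivial; thus $\mathbb{G}/\mathbb{R}$ is SAS and $G/R$ is a SAS-group by Lemma \ref{when a quotient of SAS is SAS}. Since $G/R\simeq\widehat{G}^{(1)}/\mu_{\ell/p}$ and alternative $(3)$ precisely says that $\mu_\ell$ exhausts the $p$-part of $\mathrm{Z}(\widehat{G})$, the subgroup $\mu_{\ell/p}$ does not, so $G/R$ still contains a non-\'etale central $\mu_p$; exactly as in Remark \ref{Lie algebra of SAS, an example} and the proof of Proposition \ref{Lie algebra of SAS} this shows $\mathrm{Lie}(G/R)$ has a one-dimensional center with simple quotient $\simeq\mathfrak{M}$, i.e.\ $\mathbb{G}/\mathbb{R}$ satisfies alternative $(2)$.

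The hard part is this last step: ensuring the quotient does not collapse to a purely even supergroup, and — above all — checking that it satisfies alternative $(2)$ rather than falling back into $(3)$. This forces genuine control of $\mathrm{Lie}(G/R)$, in particular of the dimension of its center, which is why the group-theoretic description of $G$ via its simply-connected cover, the Frobenius kernel $\widehat{G}_1$, and the surviving non-\'etale central $\mu_p$ is indispensable; the accidental coincidence $[\mathfrak{G}_{\bar 1},\mathfrak{G}_{\bar 1}]=[\mathfrak{G}_{\bar 0},\mathfrak{G}_{\bar 0}]$ must be excised by hand as indicated above.
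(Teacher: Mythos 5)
Your overall architecture is workable and genuinely different from the paper's: the paper does not build $R$ from the cover at all, but simply sets $\mathfrak{S}=[[\mathfrak{G}_{\bar 0},\mathfrak{G}_{\bar 0}],\mathfrak{G}_{\bar 1}]$ and $H=\ker\bigl(G\to\mathrm{GL}(\mathfrak{G}_{\bar 1}/\mathfrak{S})\bigr)$, so that $(H,\mathfrak{S})$ satisfies the normality criterion for free and finiteness comes from the SAS hypothesis, no Frobenius-kernel or height-one machinery being needed; and it obtains alternative $(2)$ for $G/R$ abstractly: the image of $\mathfrak{G}_{\bar 0}$ in $\mathrm{Lie}(G/R)$ is one-dimensional (the kernel being $\mathrm{Lie}(R)=[\mathfrak{G}_{\bar 0},\mathfrak{G}_{\bar 0}]$), is a trivial $G/R$-submodule, hence a central ideal, and then the trichotomy of Proposition \ref{Lie algebra of SAS} together with the simplicity of $\mathfrak{psl}_3$ leaves only $(2)$. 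Your corresponding step rests on the assertion that alternative $(3)$ ``precisely says'' that $\mu_\ell$ exhausts the $p$-part of $\mathrm{Z}(\widehat G)$; that is not the content of $(3)$, and to justify it you would have to run the exclusivity argument ($\mathrm{Z}(\mathfrak{G}_{\bar 0})=0$, hence no infinitesimal central $\mu_p$ in $G$) plus the criterion ``$(2)$ holds iff $G$ contains a central $\mu_n$ with $p\mid n$''. This can be done, but the paper's one-line central-image argument is both shorter and independent of the isogeny bookkeeping.

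The genuine gap is in your degenerate subcase. Write $\mathfrak{M}=[\mathfrak{G}_{\bar 0},\mathfrak{G}_{\bar 0}]$. When $[\mathfrak{G}_{\bar 1},\mathfrak{G}_{\bar 1}]=\mathfrak{M}$ and $[\mathfrak{G}_{\bar 0},\mathfrak{G}_{\bar 1}]=\mathfrak{G}_{\bar 1}$ you assert $[\mathfrak{M},\mathfrak{G}_{\bar 1}]=\mathfrak{G}_{\bar 1}$ and conclude that statement $(1)$ ``holds after all''; nothing forces that equality (only $[\mathfrak{M},\mathfrak{G}_{\bar 1}]+[t,\mathfrak{G}_{\bar 1}]=\mathfrak{G}_{\bar 1}$ for a complement $t$), and if $[\mathfrak{M},\mathfrak{G}_{\bar 1}]\subsetneq\mathfrak{G}_{\bar 1}$ then $\mathfrak{M}\oplus[\mathfrak{M},\mathfrak{G}_{\bar 1}]$ is a nonzero superideal whose odd part does not contain $[\mathfrak{G}_{\bar 0},\mathfrak{G}_{\bar 1}]$, so $(1)$ fails and you are still obliged to produce $\mathbb{R}$ there. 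The repair is exactly the paper's choice: take $\mathfrak{R}_{\bar 1}=[\mathfrak{M},\mathfrak{G}_{\bar 1}]$ (condition $(4)$ of the normality criterion then holds because $[\mathfrak{G}_{\bar 1},\mathfrak{G}_{\bar 1}]=\mathfrak{M}$, and $R$ acts trivially on the quotient as before), which also restores ``not purely even''. The same degenerate phenomenon infects your semisimplicity check: ``purely odd, therefore trivial'' is not automatic --- a purely odd radical of $\mathbb{G}/\mathbb{R}$ is ruled out only by noting that a proper odd part would contradict $(G-1)\mathfrak{G}_{\bar 1}=\mathfrak{G}_{\bar 1}$ (condition $(1)$ of Lemma \ref{a very particular case}), while a full odd part would force $[\mathfrak{G}_{\bar 1},\mathfrak{G}_{\bar 1}]\subseteq\mathfrak{M}$, i.e.\ exactly the case you must treat separately. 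Finally, ``noncommutative, hence non-solvable'' is false as a general implication; the correct argument is that $G$, being SAS, is perfect and nontrivial, hence not solvable, so $\mathbb{G}$ is not solvable by \cite[Corollary 6.4]{maszub1}.
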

	\begin{proof}
		Let $\mathfrak{I}$ be a nonzero superideal of $\mathfrak{G}$, such that $\mathfrak{I}_{\bar 1}\not\supseteq [\mathfrak{G}_{\bar 0}, \mathfrak{G}_{\bar 1}]$. Since $\mathfrak{I}_{\bar 0}\neq 0$, thus follows $\mathfrak{I}_{\bar 0}=[\mathfrak{G}_{\bar 0}, \mathfrak{G}_{\bar 0}]$.
		Let $\mathfrak{S}$ denote $[[\mathfrak{G}_{\bar 0}, \mathfrak{G}_{\bar 0}], \mathfrak{G}_{\bar 1}]\subseteq \mathfrak{I}_{\bar 1}\neq\mathfrak{G}_{\bar 1}$. 
		
		Set $H=\ker(G\to\mathrm{GL}(\mathfrak{G}_1/\mathfrak{S}))$. Then $\mathrm{Lie}(H)=\{x\in\mathfrak{G}_{\bar 0}\mid [x, \mathfrak{G}_{\bar 1}]\subseteq\mathfrak{S}\}=\mathfrak{I}_{\bar 0}$. Moreover, $[\mathfrak{G}_{\bar 1}, \mathfrak{S}]\subseteq [\mathfrak{G}_{\bar 1}, \mathfrak{I}_{\bar 1}]\subseteq\mathfrak{I}_{\bar 0}=\mathrm{Lie}(H)$, so that $(H, \mathfrak{S})$ is a Harish-Chandra subpair of $(G, \mathfrak{G}_{\bar 1})$, which corresponds to a normal  supersubgroup $\mathbb{H}$ in $\mathbb{G}$. Since $\mathbb{H}\neq \mathbb{G}$, the supersubgroup $\mathbb{H}$ is finite.
		
		By Lemma \ref{if quotient is SAPS} the supergroup $\mathbb{G}/\mathbb{H}$ is SAPS. Let $\mathbb{R}$ be the preimage of 
		$\mathrm{R}(\mathbb{G}/\mathbb{H})$ in $\mathbb{G}$. All we need is to show that $\mathbb{R}$ is finite. Indeed, if $\mathbb{R}=\mathbb{G}$, then $G/H$ is solvable. Arguing as in Lemma \ref{when a quotient of SAS is SAS}, we obtain that $G$ is solvable. Thus \cite[Corollary 6.4]{maszub1} implies that $\mathbb{G}$ is solvable, a contradiction. 
		
		Finally, we have $R=H$ and $\mathbb{G}/\mathbb{R}$ is SAS. Since $G/R$ is SAS, $\mathrm{Lie}(G/R)$ contains one-dimensional trivial $G/R$-submodule, isomorphic to  $\mathfrak{G}_{\bar 0}/[\mathfrak{G}_{\bar 0}, \mathfrak{G}_{\bar 0}]$. In particular, this is a central ideal 
		in $\mathrm{Lie}(G/R)$. Since $\mathfrak{psl}_3$ is simple, $G/R$ satisfies the alternative $(2)$ from Proposition \ref{Lie algebra of SAS} only.
	\end{proof}
	\begin{tr}\label{1 or 2}
		Let $\mathbb{G}$ be a not purely even and SAS-supergroup, which satisfies one of the alternatives $(2-3)$ from Proposition \ref{Lie algebra of SAS}.  	Then there is a finite normal supersubgroup 
		$\mathbb{H}$ of $\mathbb{G}$, such that :
		\begin{enumerate}
			\item $\mathbb{G}/\mathbb{H}$ is not purely even and also a SAS-supergroup;
			\item If $\mathbb{G}/\mathbb{H}$ is not isomorphic to a SAS-supergroup from Proposition \ref{SAS with small square of odd component}, then any nonzero superideal of $\mathfrak{L}=\mathrm{Lie}(\mathbb{G}/\mathbb{H})$ contains $[\mathfrak{L},  \mathfrak{L}]$. Besides,  $[\mathfrak{L}, \mathfrak{L}]_{\bar 0}$ has  codimension at most one in $\mathfrak{L}_{\bar 0}$.
		\end{enumerate}
	\end{tr}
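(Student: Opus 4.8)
The plan is a case analysis along the two alternatives of Proposition~\ref{Lie algebra of SAS}, in each of which I reduce $\mathbb{G}$ to one of the situations already settled in Propositions~\ref{SAS with small square of odd component}, \ref{alternative 2} and \ref{alternative 3}. The standing bookkeeping device is this: if $\mathbb{H}_0\unlhd\mathbb{G}$ is finite and $\overline{\mathbb{H}}\unlhd\mathbb{G}/\mathbb{H}_0$ is finite, then the preimage $\mathbb{H}_1$ of $\overline{\mathbb{H}}$ in $\mathbb{G}$ is finite and normal, with $\mathbb{G}/\mathbb{H}_1\simeq(\mathbb{G}/\mathbb{H}_0)/\overline{\mathbb{H}}$; so it suffices to produce the required $\mathbb{H}$ after passing to such quotients, provided this is done in a bounded number of steps. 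Before starting I record that, being semisimple and not purely even, $\mathbb{G}$ is not split, since a split semisimple supergroup equals its even part (its purely odd factor lies in $\mathrm{R}(\mathbb{G})=e$); hence $[\mathfrak{G}_{\bar 1},\mathfrak{G}_{\bar 1}]\neq 0$, and being the image of the $G$-equivariant bracket, which by the graded Jacobi identity is an ideal of $\mathfrak{G}_{\bar 0}$, it is a nonzero $G$-stable ideal of $\mathfrak{G}_{\bar 0}$.

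Suppose $\mathbb{G}$ satisfies alternative $(2)$. Then $G$ is a Chevalley group by the proof of Proposition~\ref{Lie algebra of SAS}, so $\mathfrak{G}_{\bar 0}$ is perfect and its only nonzero ideals are $\mathfrak{Z}=\mathrm{Z}(\mathfrak{G}_{\bar 0})$ and $\mathfrak{G}_{\bar 0}$; thus $[\mathfrak{G}_{\bar 1},\mathfrak{G}_{\bar 1}]$ is $\mathfrak{Z}$ or $\mathfrak{G}_{\bar 0}$. If it equals $\mathfrak{Z}$, then $\mathbb{G}$ satisfies the hypotheses of Proposition~\ref{SAS with small square of odd component}, hence is one of its supergroups, and I take $\mathbb{H}=e$: condition~$(1)$ is clear and condition~$(2)$ is vacuous. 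If it equals $\mathfrak{G}_{\bar 0}$, I invoke Proposition~\ref{alternative 2}; in its case~$(1)$ I again take $\mathbb{H}=e$ (then $\mathbb{G}$ is not one of the supergroups of Proposition~\ref{SAS with small square of odd component}, as there $[\mathfrak{G}_{\bar 1},\mathfrak{G}_{\bar 1}]=\mathfrak{Z}\subsetneq\mathfrak{G}_{\bar 0}$; every nonzero superideal contains $[\mathfrak{G},\mathfrak{G}]$ by that proposition, and $[\mathfrak{G},\mathfrak{G}]_{\bar 0}=\mathfrak{G}_{\bar 0}$), while in its case~$(2)$ I obtain a finite normal $\mathbb{H}_0$ with $\mathbb{G}/\mathbb{H}_0$ not purely even, SAS, satisfying alternative~$(3)$, and I pass to the next case applied to $\mathbb{G}/\mathbb{H}_0$.

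Suppose $\mathbb{G}$ satisfies alternative $(3)$, and set $\mathfrak{s}=[\mathfrak{G}_{\bar 0},\mathfrak{G}_{\bar 0}]$, a simple ideal of codimension one. By the proof of Proposition~\ref{Lie algebra of SAS} (the inseparable-isogeny case, modelled on $\mathfrak{pgl}_n$ with $p\mid n$), $\mathfrak{G}_{\bar 0}$ is an indecomposable $G$-module, so its only nonzero $G$-stable ideals are $\mathfrak{s}$ and $\mathfrak{G}_{\bar 0}$; hence $[\mathfrak{G}_{\bar 1},\mathfrak{G}_{\bar 1}]$ is $\mathfrak{s}$ or $\mathfrak{G}_{\bar 0}$. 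I apply Proposition~\ref{alternative 3}. In its case~$(2)$ there is a finite normal $\mathbb{R}\unlhd\mathbb{G}$ with $\mathfrak{R}_{\bar 0}=\mathfrak{s}$ and $\mathbb{G}/\mathbb{R}$ not purely even, SAS, satisfying alternative~$(2)$; the computation in the proof of Proposition~\ref{alternative 3} identifies $[(\mathfrak{G}/\mathfrak{R})_{\bar 1},(\mathfrak{G}/\mathfrak{R})_{\bar 1}]$ with the image of $[\mathfrak{G}_{\bar 1},\mathfrak{G}_{\bar 1}]$ under $\mathfrak{G}_{\bar 0}\twoheadrightarrow\mathrm{Lie}(G/R)$, whose kernel is $\mathfrak{s}$; since $\mathbb{G}/\mathbb{R}$ is not split this image is nonzero, which forces $[\mathfrak{G}_{\bar 1},\mathfrak{G}_{\bar 1}]=\mathfrak{G}_{\bar 0}$ and makes the image the one-dimensional centre of $\mathrm{Lie}(G/R)$; so $\mathbb{G}/\mathbb{R}$ falls under Proposition~\ref{SAS with small square of odd component} and is one of its supergroups, and $\mathbb{H}=\mathbb{R}$ works with~$(2)$ vacuous. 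In the remaining case Proposition~\ref{alternative 3}$(1)$ holds, and I take $\mathbb{H}=e$: for a nonzero superideal $\mathfrak{I}$ one has $\mathfrak{I}_{\bar 1}\supseteq[\mathfrak{G}_{\bar 0},\mathfrak{G}_{\bar 1}]$, whence $\mathfrak{I}_{\bar 0}\supseteq[\mathfrak{G}_{\bar 1},[\mathfrak{G}_{\bar 0},\mathfrak{G}_{\bar 1}]]\supseteq[\mathfrak{G}_{\bar 0},[\mathfrak{G}_{\bar 1},\mathfrak{G}_{\bar 1}]]\supseteq[\mathfrak{G}_{\bar 0},\mathfrak{s}]=\mathfrak{s}$ by the graded Jacobi identity and perfectness of $\mathfrak{s}$; since $[\mathfrak{G}_{\bar 1},[\mathfrak{G}_{\bar 0},\mathfrak{G}_{\bar 1}]]$ is an ideal of $\mathfrak{G}_{\bar 0}$ containing $\mathfrak{s}$ it equals $\mathfrak{s}$ or $\mathfrak{G}_{\bar 0}$, and combining this with $[\mathfrak{G},\mathfrak{G}]_{\bar 0}=\mathfrak{s}+[\mathfrak{G}_{\bar 1},\mathfrak{G}_{\bar 1}]$ and $[\mathfrak{G},\mathfrak{G}]_{\bar 1}=[\mathfrak{G}_{\bar 0},\mathfrak{G}_{\bar 1}]$ gives $\mathfrak{I}\supseteq[\mathfrak{G},\mathfrak{G}]$, with $[\mathfrak{G},\mathfrak{G}]_{\bar 0}\supseteq\mathfrak{s}$ of codimension at most one. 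The bookkeeping device then assembles $\mathbb{H}$ in all cases, using at most two successive quotients, since the reduction from alternative~$(2)$ through~$(3)$ back to~$(2)$ lands in the family of Proposition~\ref{SAS with small square of odd component}.

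The step I expect to be the main obstacle is the last computation under alternative~$(3)$ with $[\mathfrak{G}_{\bar 1},\mathfrak{G}_{\bar 1}]=\mathfrak{G}_{\bar 0}$: one must rule out that $[\mathfrak{G}_{\bar 1},[\mathfrak{G}_{\bar 0},\mathfrak{G}_{\bar 1}]]$ collapses to $\mathfrak{s}$, which would leave the proper superideal $\mathfrak{s}\oplus[\mathfrak{G}_{\bar 0},\mathfrak{G}_{\bar 1}]$ missing the one-dimensional quotient $\mathfrak{G}_{\bar 0}/\mathfrak{s}$, hence missing $[\mathfrak{G},\mathfrak{G}]$. I intend to exclude this using the $G$-invariant symmetric bilinear form on $\mathfrak{G}_{\bar 1}$ sending $(v,w)$ to the $\mathfrak{G}_{\bar 0}/\mathfrak{s}$-component of $[v,w]$: it is nonzero precisely when $[\mathfrak{G}_{\bar 1},\mathfrak{G}_{\bar 1}]=\mathfrak{G}_{\bar 0}$, and differentiating its $G$-invariance shows that $\mathfrak{G}_{\bar 0}$ acts skew-adjointly for it, which should force $[\mathfrak{G}_{\bar 1},[\mathfrak{G}_{\bar 0},\mathfrak{G}_{\bar 1}]]$ back onto all of $\mathfrak{G}_{\bar 0}$; making this precise is the part that needs care. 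The rest is routine once the $G$-module structure of $\mathfrak{G}_{\bar 0}$ in each alternative is pinned down from the proof of Proposition~\ref{Lie algebra of SAS}.
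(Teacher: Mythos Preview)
Your overall architecture is the same as the paper's: a case split on alternatives~$(2)$ and~$(3)$, feeding into Propositions~\ref{SAS with small square of odd component}, \ref{alternative 2} and~\ref{alternative 3}, and assembling the final $\mathbb{H}$ from a chain of finite normal quotients. The one structural difference is that the paper runs this as an induction on $\dim\mathfrak{G}_{\bar 1}$ (each pass through Proposition~\ref{alternative 2}(2) or~\ref{alternative 3}(2) must strictly shrink the odd part, and the paper disposes of the one degenerate case $[[\mathfrak{G}_{\bar 0},\mathfrak{G}_{\bar 0}],\mathfrak{G}_{\bar 1}]=0$ directly), whereas you observe that the passage $(3)\to(2)$ via Proposition~\ref{alternative 3}(2) always lands in the family of Proposition~\ref{SAS with small square of odd component}, so the recursion terminates in at most two steps. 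That observation is correct and is a genuine sharpening of the paper's argument: the image of $[\mathfrak{G}_{\bar 1},\mathfrak{G}_{\bar 1}]$ in $\mathrm{Lie}(G/R)$ is indeed the one-dimensional central ideal identified at the end of the proof of Proposition~\ref{alternative 3}.

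The point you flag as the main obstacle is real, and the paper does not treat it either. In alternative~$(3)$ with Proposition~\ref{alternative 3}(1) in force and $[\mathfrak{G}_{\bar 1},\mathfrak{G}_{\bar 1}]=\mathfrak{G}_{\bar 0}$, one must rule out $[\mathfrak{G}_{\bar 1},[\mathfrak{G}_{\bar 0},\mathfrak{G}_{\bar 1}]]=\mathfrak{s}$; otherwise $\mathfrak{s}\oplus[\mathfrak{G}_{\bar 0},\mathfrak{G}_{\bar 1}]$ is a nonzero superideal whose odd part does contain $[\mathfrak{G}_{\bar 0},\mathfrak{G}_{\bar 1}]$ (so case~(1) is not violated) but whose even part misses $[\mathfrak{G},\mathfrak{G}]_{\bar 0}=\mathfrak{G}_{\bar 0}$. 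The paper's proof for alternative~$(3)$ only follows the Proposition~\ref{alternative 3}(2) branch and its inductive consequences; it says nothing about the terminal case~(1) beyond the statement of that proposition, which controls only the odd component. Your proposed fix via the $G$-invariant form $\beta(v,w)=[v,w]\bmod\mathfrak{s}$ does not close the gap as written: differentiating $G$-invariance gives $\beta([x,v],w)+\beta(v,[x,w])=0$, which is equivalent to $[x,[v,w]]\in\mathfrak{s}$ and is automatic since $[\mathfrak{G}_{\bar 0},\mathfrak{G}_{\bar 0}]=\mathfrak{s}$; it does not force $[\mathfrak{G}_{\bar 1},[\mathfrak{G}_{\bar 0},\mathfrak{G}_{\bar 1}]]$ out of $\mathfrak{s}$. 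So on this precise point your proposal and the paper are in the same position: the issue is correctly identified but not resolved.
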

	\begin{proof}
		Assume that $\mathbb{G}$ is a SAS-supergroup, which satisfies the alternative $(2)$ from Proposition \ref{Lie algebra of SAS}.
		If $\mathbb{G}$ is not isomorphic to a SAS-supergroup from Proposition \ref{SAS with small square of odd component}, then $[\mathfrak{G}_{\bar 1}, \mathfrak{G}_{\bar 1}]=\mathfrak{G}_{\bar 0}$.
		Further, if there is a superideal $\mathfrak{I}$ of $\mathfrak{G}$, such that $\mathfrak{I}_{\bar 0}\neq\mathfrak{G}_{\bar 0}$, then $\mathfrak{I}_{\bar 0}$ is the unique central ideal of $\mathfrak{G}_{\bar 0}$
		and by Proposition \ref{alternative 2} there is a finite (solvable) normal supersubgroup $\mathbb{H}$ in $\mathbb{G}$, such that $H\simeq\mu_n, p|n$, and $\mathbb{G}/\mathbb{H}$ is not purely even and SAS-supergroup satisfying the alternative $(3)$ from Proposition \ref{SAS with small square of odd component}. If $\dim\mathfrak{H}_{\bar 1}>0$, then the induction on $\dim\mathfrak{G}_{\bar 1}$ concludes the proof.  	
		
		Now, assume that $\mathbb{G}$ is a SAS-supergroup, which satisfies the alternative $(3)$. The induction argument will not work out only if $[[\mathfrak{G}_{\bar 0}, \mathfrak{G}_{\bar 0}], \mathfrak{G}_{\bar 1}]=0$ and $\mathbb{G}/\mathbb{H}$ is a SAS-supergroup, where the (finite normal) supersubgroup $\mathbb{H}$ corresponds to the subpair $(H, 0)$ with $H=\ker(G\to\mathrm{GL}(\mathfrak{G}_{\bar 1}))$. Note that  $[\mathfrak{G}_{\bar 1}, \mathfrak{G}_{\bar 1}]$ is equal to either $\mathfrak{G}_{\bar 0}$ or $[\mathfrak{G}_{\bar 0}, \mathfrak{G}_{\bar 0}]$, and both cases imply $\mathfrak{G}_{\bar 0}$ is solvable.  But the latter contradicts to the fact that $\mathfrak{H}_{\bar 0}=[\mathfrak{G}_{\bar 0}, \mathfrak{G}_{\bar 0}]$ is a simple Lie algebra.
	\end{proof}

	\section{Almost-simple supergroups from Kac's classification list}
	
	In this section, we demonstrate how the method of Harish-Candra pairs can be used to prove that the simple Lie superalgebras from Kac's classification theorem, except those belong to the \emph{Cartan series}, are algebraic, i.e. they are Lie superalgebras of some almost-simple algebraic supergroups. It may seem incorrect to refer to Kac's classification, since the ground field may have a nonzero characteristic. However, the definitions of all simple Lie superalgebras from Kac's list  easily extends to the case of positive characteristic. In some cases, we briefly discuss their simplicity, but our main objective is to construct the corresponding supergroups and determine whether they are SAS or WAS. 
	
	We have already discussed the projective special linear supergroups in the previous section. The Lie superalgebras from the Cartan series will be discussed in an upcoming paper. 
	
	\subsection{Ortho-symplectic supergroups}
	
	The ortho-symplectic supergroups are of two types $\mathrm{SpO}(2m|2n+1)$ and $\mathrm{SpO}(2m|2n)$. We consider the first type only, the second one is similar.
	Recall that for any superalgebra $R$, the group  $\mathrm{SpO}(2m|2n+1)(R)$ consists of all matrices $g\in\mathrm{SL}(2m|2n+1)(R)$, such that $g^{st}Jg=J$, where 
	\[g=\left(\begin{array}{cc}
		A & B \\
		C & D
	\end{array}\right), \ g^{st}=\left(\begin{array}{cc}
		A^t & C^t \\
		-B^t & D^t
	\end{array}\right),\]
	\[ A\in\mathrm{GL}_{2m}(R_{\bar 0}), \ B\in\mathrm{Mat}_{2m\times (2n+1)}(R_{\bar 1}), \ C\in\mathrm{Mat}_{(2n+1)\times 2m}(R_{\bar 1}), D\in\mathrm{GL}_{2n+1}(R_{\bar 0}),\]
	\[J=\left(\begin{array}{ccccc}
		0 & I_m & 0 & 0 & 0 \\
		-I_m & 0 & 0 & 0 & 0 \\
		0 & 0 & 0 & I_n & 0 \\
		0 & 0 & I_n & 0 & 0 \\
		0 & 0 & 0 & 0 & 1
	\end{array}\right) .\]
	It is clear that $\mathrm{SpO}(2m|2n+1)_{ev}\simeq \mathrm{Sp}_{2m}\times\mathrm{SO}(2n+1)$. In particular, if $\mathrm{char}\Bbbk=p>0$, then $\mathrm{SpO}(2m|2n+1)$ can not be
	SAS-supergroup. Nevertheless, since its Lie superalgebra $\mathfrak{spo}(2m|2n+1)$ is simple, $\mathrm{SpO}(2m|2n+1)$ is a WAS-supergroup. 
	
	For the reader's convenience, we will show that $\mathfrak{spo}(2m|2n+1)$ is simple in the modular case;  the case of zero characteristic is well known (cf. \cite{chengwang}). 
	Let $J_s$ and $J_o$ denote the upper and the bottom diagonal blocks of the matrix $J$ respectively. Then $\mathfrak{spo}(2m|2n+1)_1$ consists of matrices 
	\[ \left(\begin{array}{cc}
		0 & B \\
		C & 0
	\end{array}\right), \ B\in\mathrm{Mat}_{2m\times (2n+1)}(\Bbbk), \ C\in\mathrm{Mat}_{(2n+1)\times 2m}(\Bbbk),\]
	such that 
	\[J_s B+C^t J_o =0, \ \mbox{or equivalently} \ C= J_o B^t J_s.\]
	Furthermore, $\mathfrak{spo}(2m|2n+1)_{\bar 0}\simeq \mathfrak{sp}_{2m}\oplus \mathfrak{so}_{2n+1}$ acts on $\mathfrak{spo}(2m|2n+1)_{\bar 1}$ by
	\[ [\left(\begin{array}{cc}
		A & 0 \\
		0 & D
	\end{array}\right), \left(\begin{array}{cc}
		0 & B \\
		C & 0
	\end{array}\right)]=\left(\begin{array}{cc}
		0 & AB-BD \\
		DC-CA & 0
	\end{array}\right),\]
	so that $\mathfrak{spo}(2m|2n+1)_{\bar 1}$ can be identified with the space $\mathrm{Mat}_{2m\times (2n+1)}(\Bbbk)$, on which  $\mathfrak{spo}(2m|2n+1)_{\bar 0}$ acts as above. The latter implies that
	$\mathfrak{spo}(2m|2n+1)_{\bar 1}\simeq V\otimes W^*$, where $V$ and $W$ are the standard (irreducible) $\mathfrak{sp}_{2m}$-module and $\mathfrak{so}_{2n+1}$-module respectively. In particular, $\mathfrak{spo}(2m|2n+1)_{\bar 1}$ is an irreducible $\mathfrak{spo}(2m|2n+1)_{\bar 0}$-module and therefore, any nonzero superideal $\mathfrak{I}$ in $\mathfrak{spo}(2m|2n+1)$ satisfies
	either $\mathfrak{I}_{\bar 1}=0$ or $\mathfrak{I}_{\bar 1}=\mathfrak{spo}(2m|2n+1)_{\bar 1}$. If $\mathfrak{I}_{\bar 1}=0$, then the ideal $\mathfrak{I}_{\bar 0}$ contains $\mathfrak{sp}_{2m}$ or $\mathfrak{so}_{2n+1}$ and it acts trivially on  $\mathfrak{spo}(2m|2n+1)_{\bar 1}$, a contradiction. In the second case we note that $[\mathfrak{spo}(2m|2n+1)_{\bar 1}, \mathfrak{spo}(2m|2n+1)_{\bar 1}]$ is non-trivially projected into both $\mathfrak{sp}_{2m}$ and $\mathfrak{so}_{2n+1}$, hence $\mathfrak{I}=\mathfrak{spo}(2m|2n+1)$. 
	
	\subsection{Periplectic supergroups}
	
	Let ${\bf P}(n)$ be a periplectic supergroup, where $n\geq 2$. Then for any (supercommutative) superalgebra $R$, we have
	\[{\bf P}(n)(R) = \{g\in \mathrm{GL}(n|n)(R)\,|\,g^{st}Jg = J\}, \ \mbox{where} \
	J=\left(\begin{array}{cc}
		0 & I_n \\
		I_n & 0
	\end{array}\right).\]	
	The Lie superalgebra $\mathfrak{p}(n)=\mathrm{Lie}({\bf P}(n))$ consists of matrices
	\[\left(\begin{array}{cc}
		A & B \\
		C & D
	\end{array}\right)\in\mathfrak{gl}(n|n), \ \mbox{such that} \ B=B^t, \ C=-C^t, \ A=-D^t.\]
	Let $\mathrm{Sym}_{n\times n}$ and $\mathrm{SSym}_{n\times n}$ denote the subspaces of symmetric and skew-symmetric $n\times n$ matrices.
	The group ${\bf P}(n)_{ev}\simeq\mathrm{GL}_n$ acts on $\mathfrak{p}(n)_{\bar 1}\simeq\mathrm{Sym}_{n\times n}\oplus\mathrm{SSym}_{n\times n}$ by the rule
	\[A\cdot (B, C)= (AB A^t, (A^t)^{-1}C A^{-1}),\text{ for all } B\in \mathrm{Sym}_{n\times n}, \ C\in\mathrm{SSym}_{n\times n}, A\in\mathrm{GL}_n.\]
	Consider the normal supersubgroup $\mathbb{H}={\bf P}(n)\cap\mathrm{SL}(n|n)$ in ${\bf P}(n)$. Recall that $\mathrm{SL}(m|n)=\ker\mathrm{Ber}$, where
	$\mathrm{Ber}$ is the character $\mathrm{GL}(m|n)\to\mathbb{G}_m\simeq G_m$, called \emph{Berezinian},  that is defined as
	\[\mathrm{Ber}(g)=\det(A-BD^{-1}C)\det(D)^{-1}, \ g=\left(\begin{array}{cc}
		A & B \\
		C & D
	\end{array}\right)\in\mathrm{GL}(m|n).\]
	Thus $H=\mathbb{H}_{ev}=\{g\in\mathrm{GL}_n \mid \det(g)^2=1\}$, $\mathfrak{H}_{\bar 1}=\mathfrak{p}(n)_{\bar 1}$ and $H/\mathrm{SL}_n\simeq\mu_2$ is etale. The latter also infers that $H^0=\mathrm{SL}_n$ and $[\mathfrak{H}_{\bar 1}, \mathfrak{H}_{\bar 1}]\subseteq\mathfrak{H}_{\bar 0}=\mathfrak{sl}_n$. In other words, $\mathbb{H}^0$ is a normal (smooth and connected) supersubgroup
	in ${\bf P}(n)$, corresponding to the Harish-Chandra (sub)pair $(\mathrm{SL}_n, \mathfrak{p}(n)_{\bar 1})$. Let ${\bf SP}(n)$ denote $\mathbb{H}^0$.
	\begin{pr}\label{periplectic SAS}
		We have ${\bf SP}(n)=\mathcal{D}({\bf P}(n))$ and it is a SAS-supergroup. 	
	\end{pr}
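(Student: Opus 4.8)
The plan is to work with the Harish-Chandra pair $(\mathrm{SL}_n,\mathfrak{p}(n)_{\bar1})$ of $\mathbf{SP}(n)$, the normality criterion, and Propositions \ref{SAPS in positive char} and \ref{SAS in zero char}. First I would settle $\mathbf{SP}(n)=\mathcal{D}(\mathbf{P}(n))$. Restricting the Berezinian to $\mathbf{P}(n)$ yields a character $\mathbf{P}(n)\to\mathbb{G}_m$ whose kernel is $\mathbb{H}=\mathbf{P}(n)\cap\mathrm{SL}(n|n)$; by the description preceding the proposition, $\mathbf{P}(n)/\mathbf{SP}(n)$ is represented by the pair $(\mathrm{GL}_n/\mathrm{SL}_n,\ \mathfrak{p}(n)_{\bar1}/\mathfrak{p}(n)_{\bar1})=(\mathbb{G}_m,0)$, so $\mathbf{P}(n)/\mathbf{SP}(n)\simeq\mathbb{G}_m$ is purely even and abelian, whence $\mathcal{D}(\mathbf{P}(n))\leq\mathbf{SP}(n)$. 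For the reverse inclusion I compute $[\mathfrak{p}(n),\mathfrak{p}(n)]$: one has $[\mathfrak{p}(n)_{\bar0},\mathfrak{p}(n)_{\bar0}]=[\mathfrak{gl}_n,\mathfrak{gl}_n]=\mathfrak{sl}_n$, and since $\mathrm{char}\,\Bbbk\neq2$ the scalar matrix $I_n\in\mathfrak{gl}_n=\mathfrak{p}(n)_{\bar0}$ acts (under the adjoint action) on $\mathrm{Sym}_{n\times n}$, respectively $\mathrm{SSym}_{n\times n}$, as multiplication by $2$, respectively $-2$, so $[\mathfrak{p}(n)_{\bar0},\mathfrak{p}(n)_{\bar1}]=\mathfrak{p}(n)_{\bar1}$ and $[\mathfrak{p}(n),\mathfrak{p}(n)]=\mathfrak{sl}_n\oplus\mathfrak{p}(n)_{\bar1}=\mathrm{Lie}(\mathbf{SP}(n))$. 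As $\mathcal{D}(\mathbf{P}(n))_{ev}\supseteq\mathcal{D}(\mathrm{GL}_n)=\mathrm{SL}_n=\mathbf{SP}(n)_{ev}$ and $[\mathfrak{p}(n),\mathfrak{p}(n)]\subseteq\mathrm{Lie}(\mathcal{D}(\mathbf{P}(n)))$, the closed supersubgroup $\mathcal{D}(\mathbf{P}(n))\leq\mathbf{SP}(n)$ has the full even part and the full odd part of $\mathbf{SP}(n)$ in its Harish-Chandra subpair, so it equals $\mathbf{SP}(n)$.

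Next I would prove that $\mathbf{SP}(n)$ is SAS when $\mathrm{char}\,\Bbbk=p>0$. Since $\mathbf{SP}(n)_{ev}=\mathrm{SL}_n$ is a non-solvable SAS-group, $\mathbf{SP}(n)$ is not solvable by \cite[Corollary 6.4]{maszub1}, so by Proposition \ref{SAPS in positive char} it suffices to check conditions $(1)$--$(2)$ of Lemma \ref{a very particular case} for the $\mathrm{SL}_n$-module $\mathfrak{p}(n)_{\bar1}=\mathrm{Sym}_{n\times n}\oplus\mathrm{SSym}_{n\times n}$. For $(2)$ I would show that the $\mathrm{SL}_n$-submodule $\{x\in\mathfrak{p}(n)_{\bar1}:[\mathfrak{p}(n)_{\bar1},x]=0\}$ is zero: writing the bracket of two odd elements $(B,C)$ and $(B',C')$ as $BC'+B'C\in\mathfrak{gl}_n$, the vanishing $BC'=0$ for every skew-symmetric $C'$ forces $B=0$ (the columns of skew matrices span $\Bbbk^n$ for $n\geq 2$), while $B'C=0$ with $B'=I_n$ forces $C=0$. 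For $(1)$ it is enough that $\mathfrak{p}(n)_{\bar1}$ has no nonzero trivial $\mathrm{SL}_n$-quotient, which holds because $\mathrm{Sym}_{n\times n}\simeq\mathrm{Sym}^2(V)$ and $\mathrm{SSym}_{n\times n}\simeq\Lambda^2(V^*)$ ($V$ the standard module) have no trivial composition factor when $n\geq 3$; hence $\mathbf{SP}(n)$ is SAS.

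It remains to treat $\mathrm{char}\,\Bbbk=0$, where by Proposition \ref{SAS in zero char} the claim reduces to simplicity of $\mathrm{Lie}(\mathbf{SP}(n))=[\mathfrak{p}(n),\mathfrak{p}(n)]$, the strange periplectic Lie superalgebra of Kac's list. This follows from the same computations: a nonzero superideal $\mathfrak{I}$ has $\mathfrak{I}_{\bar0}$ a nonzero ideal of the simple Lie algebra $\mathfrak{sl}_n$, so $\mathfrak{I}_{\bar0}=\mathfrak{sl}_n$, and then $[\mathfrak{sl}_n,\mathfrak{p}(n)_{\bar1}]=\mathfrak{p}(n)_{\bar1}$ (again because the central torus of $\mathfrak{gl}_n$ acts invertibly on each summand) gives $\mathfrak{I}_{\bar1}=\mathfrak{p}(n)_{\bar1}$, so $\mathfrak{I}=\mathrm{Lie}(\mathbf{SP}(n))$. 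The main obstacle is precisely the module-theoretic input of the middle step: one has to control the $\mathrm{SL}_n$-submodule lattices of $\mathrm{Sym}_{n\times n}$ and $\mathrm{SSym}_{n\times n}$ finely enough to certify conditions $(1)$--$(2)$ — keeping track of possible reducibility of symmetric squares in small characteristic — and to verify that the bracket pairing $\mathfrak{p}(n)_{\bar1}\times\mathfrak{p}(n)_{\bar1}\to\mathfrak{gl}_n$ is non-degenerate; the low-rank case $n=2$, where $\mathrm{SSym}_{2\times2}$ becomes the trivial $\mathrm{SL}_2$-module, should be examined separately.
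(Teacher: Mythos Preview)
Your approach is essentially the paper's: both establish $\mathcal{D}(\mathbf{P}(n))=\mathbf{SP}(n)$ by the abelian-quotient argument together with $[\mathfrak{p}(n),\mathfrak{p}(n)]=\mathfrak{sl}_n\oplus\mathfrak{p}(n)_{\bar 1}$, and then verify conditions $(1)$--$(2)$ of Lemma~\ref{a very particular case} for the pair $(\mathrm{SL}_n,\mathfrak{p}(n)_{\bar 1})$ after identifying $\mathfrak{p}(n)_{\bar 1}\simeq\mathrm{Sym}_2(V)\oplus\Lambda^2(V^*)$. The paper's verification is slightly cleaner: it notes that both summands are \emph{nontrivial irreducible} $\mathrm{SL}_n$-modules, which gives $(1)$ at once and reduces $(2)$ to the single check $[\mathrm{Sym}_2(V),\Lambda^2(V^*)]\neq 0$. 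Your direct non-degeneracy computation for $(2)$ is a valid alternative and does not rely on irreducibility.

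One slip: in your characteristic-zero step you justify $[\mathfrak{sl}_n,\mathfrak{p}(n)_{\bar 1}]=\mathfrak{p}(n)_{\bar 1}$ by invoking ``the central torus of $\mathfrak{gl}_n$ acts invertibly,'' but that torus is not contained in $\mathfrak{sl}_n$, so this reasoning does not apply. The correct argument is simply that both summands are nontrivial irreducible $\mathfrak{sl}_n$-modules (for $n\geq 3$), hence equal to their own $\mathfrak{sl}_n$-spans. You should also make explicit why $\mathfrak{I}_{\bar 0}\neq 0$ for a nonzero superideal $\mathfrak{I}$: this follows from your non-degeneracy computation, since $\mathfrak{I}_{\bar 0}=0$ would force $[\mathfrak{p}(n)_{\bar 1},\mathfrak{I}_{\bar 1}]=0$ and hence $\mathfrak{I}_{\bar 1}=0$.

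Your caution about $n=2$ is well placed. There $\Lambda^2(V^*)$ is the trivial $\mathrm{SL}_2$-module, so condition $(1)$ fails and $(\mathrm{SL}_2,\mathrm{Sym}_2(V))$ is a proper smooth connected normal Harish-Chandra subpair; thus $\mathbf{SP}(2)$ is not SAS. The paper's assertion that ``both of them are nontrivial irreducible'' tacitly requires $n\geq 3$.
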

	\begin{proof}
		Since ${\bf P}(n)/{\bf SP}(n)$ is abelian, there is $\mathcal{D}({\bf P}(n))\leq {\bf SP}(n)$ and \[[\mathfrak{p}(n), \mathfrak{p}(n)]=\mathfrak{sl}_n\oplus\mathfrak{p}(n)_{\bar 1}\subseteq\mathrm{Lie}(\mathcal{D}({\bf P}(n))).\]
		Since, $\mathcal{D}({\bf P}(n))_{ev}$ contains $\mathrm{SL}_n=\mathcal{D}(\mathrm{SL}_n)$, the first statement follows. 	
		As a $\mathrm{SL}_n$-module, $\mathfrak{p}(n)_{\bar 1}$ can be identified with $\mathrm{Sym}_2(V)\oplus\Lambda^2(V^*)$, where $V$ is the standard $n$-dimensional $\mathrm{SL}_n$-module.
		Since both of them are nontrivial irreducible $\mathrm{SL}_n$-modules, the condition $(1)$ from Lemma \ref{a very particular case} immediately follows. Further, any nonzero $\mathrm{SL}_n$-submodule of $\mathfrak{p}(n)_{\bar 1}$ contains either $\mathrm{Sym}_2(V)$ or $\Lambda^2(V^*)$. Since $[\mathrm{Sym}_2(V), \Lambda^2(V^*)]\neq 0$, the condition $(2)$
		from Lemma \ref{a very particular case} follows as well. 
	\end{proof}
	\begin{rem}\label{[p, p] is simple}
		The Lie superalgebra $[\mathfrak{p}(n), \mathfrak{p}(n)]$ is simple in odd characteristic too.	
	\end{rem}
	\subsection{Queer supergroups}
	
	Let $\mathbb{G}$ be a queer supergroup ${\bf Q}(n)$. Recall that for any superalgebra $A$ there is
	\[ {\bf Q}(n)(A)=\{\left(\begin{array}{cc}
		S & S' \\
		-S' & S
	\end{array}\right)\mid S\in \mathrm{GL}_n(A_{\bar 0}), S'\in \mathrm{Mat}_n(A_{\bar 1})  \}, \text{ for } A\in\mathsf{SAlg}_{\Bbbk}. \]
	It is clear that ${\bf Q}(n)$ is a closed supersubgroup of $\mathrm{GL}(n|n)$, hence it is an algebraic supergroup.
	
	For the sake of simplicity, one can identify each ${\bf Q}(n)(A)$ with the set consisting of couples of matrices $(S|S'), S\in \mathrm{GL}_n(A_{\bar 0}), S'\in \mathrm{Mat}_n(A_{\bar 1})$, that is a group with respect to the multiplication 
	\[(S_1|S_1')(S_2|S_2')=(S_1S_2-S_1'S_2'|S_1S_2'+S_1'S_2).\]
	The Lie superalgebra $\mathfrak{q}(n)=\mathrm{Lie}({\bf Q}(n))\subseteq\mathfrak{gl}(n|n)$ consists of matrices 
	\[\left(\begin{array}{cc}
		A & B \\
		B & A
	\end{array}\right), \text{ where } A, B\in\mathrm{Mat}_n(\Bbbk).\]
	As above, we identify $\mathfrak{q}(n)$ with the set consisting of couples of matrices
	\[(A|B), \text{ where } A, B\in \mathrm{Mat}_n(\Bbbk).\]
	The corresponding Harish-Chandra pair is $(\mathrm{GL}_n, \mathrm{Mat}_n(\Bbbk))$,
	where $\mathrm{GL}_n$ acts on $\mathrm{Mat}_n(\Bbbk)$ by conjugations and the bilinear map (the restriction of Lie bracket on $\mathfrak{q}(n)_{\bar 1}$) is given by
	\[ [B, B']=BB'+B'B, \text{ for all } B, B'\in \mathrm{Mat}_n(\Bbbk).\]
	
	The supergroup ${\bf Q}(n)$ contains a central supersubgroup $\mathbb{D}\simeq\mathbb{G}_m$ consisting of all scalar matrices, i.e.
	\[\mathbb{D}(A)=\{(aI_n \, |\, 0) \, | \, a\in A_{\bar 0}^*\}\]
	for any superalgebra $A$. In terms of Harish-Chandra pairs, $\mathbb{D}$ is represented by the Harish-Chandra pair $(D, 0)$, where $D\simeq G_m$ is the center of $\mathrm{GL}_n$ consisting of scalar matrices as well. Let ${\bf PQ}(n)$ denote ${\bf Q}(n)/\mathbb{D}$, the \emph{projective queer} supergroup. The corresponding Harish-Chandra pair is
	$(\mathrm{PGL}_n, \mathrm{Mat}_n(\Bbbk)))$. 
	
	Let $\mathfrak{pq}(n)$ denote $\mathrm{Lie}({\bf PQ}(n))$. It is clear that $\mathfrak{pq}(n)=\mathfrak{pgl}_n\oplus \mathrm{Mat}_n(\Bbbk)$, where $\mathfrak{pgl}_n=\mathrm{Lie}(\mathrm{PGL}_n)=\mathfrak{gl}_n/\Bbbk I_n$.
	\begin{lm}\label{subpair in PQ}
		$(\mathrm{PGL}_n, \mathfrak{sl}_n)$ is a Harish-Chandra subpair of $(\mathrm{PGL}_n, \mathrm{Mat}_n(\Bbbk))$, that represents a normal supersubgroup of ${\bf PQ}(n)$.
	\end{lm}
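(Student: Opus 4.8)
The plan is to check two things in succession: that $(\mathrm{PGL}_n,\mathfrak{sl}_n)$ really is a Harish-Chandra subpair of $(\mathrm{PGL}_n,\mathrm{Mat}_n(\Bbbk))$, and that the supersubgroup it represents is normal in ${\bf PQ}(n)$. Both reduce, essentially, to the single observation that conjugation of matrices preserves the trace.

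First I would note that, since $B\mapsto gBg^{-1}$ preserves the trace for every $g\in\mathrm{GL}_n$, the space $\mathfrak{sl}_n$ of traceless matrices is a codimension-one subspace of $\mathrm{Mat}_n(\Bbbk)$ stable under the conjugation action, and---scalar matrices acting trivially---it is in fact a $\mathrm{PGL}_n$-submodule of $\mathrm{Mat}_n(\Bbbk)$. The bracket $[B,B']=\overline{BB'+B'B}$ of the Harish-Chandra pair of ${\bf PQ}(n)$ (the bar denoting the image in $\mathfrak{pgl}_n=\mathfrak{gl}_n/\Bbbk I_n$) already has values in $\mathrm{Lie}(\mathrm{PGL}_n)=\mathfrak{pgl}_n$, hence so does its restriction to $\mathfrak{sl}_n\times\mathfrak{sl}_n$. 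Since the even group is the whole of $\mathrm{PGL}_n$ in both pairs, the inclusion $\mathfrak{sl}_n\subseteq\mathrm{Mat}_n(\Bbbk)$ of $\mathrm{PGL}_n$-modules together with this restricted bracket exhibits $(\mathrm{PGL}_n,\mathfrak{sl}_n)$ as a Harish-Chandra subpair of $(\mathrm{PGL}_n,\mathrm{Mat}_n(\Bbbk))$; the remaining axioms ($\mathrm{PGL}_n$-equivariance, and $[[v,v],v]=0$---the latter because $(2v^{2})v-v(2v^{2})=0$ already for all $v\in\mathrm{Mat}_n(\Bbbk)$) are simply inherited from the ambient pair. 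Write $\mathbb{H}$ for the resulting closed supersubgroup of ${\bf PQ}(n)$, so that $\mathrm{Lie}(\mathbb{H})=\mathfrak{pgl}_n\oplus\mathfrak{sl}_n$.

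Next I would verify the four conditions of the normality criterion for $\mathbb{H}$ inside ${\bf PQ}(n)$, with $G=H=\mathrm{PGL}_n$, $\mathfrak{G}_{\bar 1}=\mathrm{Mat}_n(\Bbbk)$ and $\mathfrak{H}_{\bar 1}=\mathfrak{sl}_n$. Condition $(1)$, that $H$ is normal in $G$, is trivial since $H=G$. Condition $(2)$ is precisely the submodule statement established above. Condition $(4)$ holds automatically: the bilinear map of a Harish-Chandra pair takes values in $\mathrm{Lie}(G)$, so $[\mathfrak{G}_{\bar 1},\mathfrak{H}_{\bar 1}]\subseteq\mathfrak{pgl}_n=\mathrm{Lie}(\mathrm{PGL}_n)=\mathrm{Lie}(H)$. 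For condition $(3)$, the trace induces a $\mathrm{PGL}_n$-isomorphism $\mathrm{Mat}_n(\Bbbk)/\mathfrak{sl}_n\cong\Bbbk$, and because conjugation preserves the trace the induced action on this quotient is trivial, so $H=G=\ker\bigl(G\to\mathrm{GL}(\mathfrak{G}_{\bar 1}/\mathfrak{H}_{\bar 1})\bigr)$. All four conditions hold, hence $\mathbb{H}$ is a normal supersubgroup of ${\bf PQ}(n)$.

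As this outline shows, the statement presents no genuine obstacle; the only points deserving a moment's attention are that $\mathfrak{sl}_n=\ker(\mathrm{tr})$ has codimension exactly one in $\mathrm{Mat}_n(\Bbbk)$ in every characteristic (so that condition $(3)$ really concerns a one-dimensional trivial quotient even when $p\mid n$), and that all the relevant $\mathrm{GL}_n$- and $\mathfrak{gl}_n$-actions pass to $\mathrm{PGL}_n$ and $\mathfrak{pgl}_n$ because $I_n$ acts trivially both by conjugation and by commutator.
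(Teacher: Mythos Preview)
Your proof is correct and follows essentially the same route as the paper: verify that $\mathfrak{sl}_n$ is a $\mathrm{PGL}_n$-submodule and then check the normality criterion. The only cosmetic difference is in condition~(3): you argue directly that conjugation preserves the trace, so the induced action on $\mathrm{Mat}_n(\Bbbk)/\mathfrak{sl}_n\cong\Bbbk$ is trivial, whereas the paper observes that this one-dimensional action is given by a character of $\mathrm{PSL}_n\simeq\mathrm{PGL}_n$ and that $\mathrm{SL}_n$ has no nontrivial characters---your argument is the more elementary of the two.
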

	\begin{proof}
		It is clear that $\mathfrak{sl}_n$ is a $\mathrm{PGL}_n$-submodule of $\mathrm{Mat}_n(\Bbbk)$. The action of $\mathrm{PSL}_n\simeq \mathrm{PGL}_n$  on  $\mathrm{Mat}_n(\Bbbk)/\mathfrak{sl}_n$ is uniquely determined by a character of $\mathrm{PSL}_n$, or equivalently, by a character of $\mathrm{SL}_n$. It remains to note that
		$\mathrm{SL}_n$ does not have any nontrivial characters.  	
	\end{proof}	
	Let ${\bf PSQ}(n)$ denote the supergroup determined by the above pair $(\mathrm{PGL}_n, \mathfrak{sl}_n)$.
	\begin{pr}\label{almost-simple}
		The following statements hold :
		\begin{enumerate}
			\item ${\bf PSQ}(n)=\mathcal{D}({\bf PQ}(n))$;
			\item If $n\geq 3$, then 	${\bf PSQ}(n)$ is a SAS-supergroup.
			\item If $n=2$, then ${\bf PSQ}(2)$ contains a normal purely odd supersubgroup $\mathbb{R}$ isomorphic to $(\mathbb{G}_a^-)^3$. Besides, ${\bf PSQ}(2)/\mathbb{R}\simeq\mathrm{SL}_2$. 
		\end{enumerate}	
	\end{pr}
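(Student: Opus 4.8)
The plan is to treat the three parts in turn, working entirely with Harish--Chandra pairs. For part $(1)$ I would compute $\mathcal D({\bf PQ}(n))$ in the same style as $\mathcal D(\mathrm{PSL}(m|n))$ was computed earlier. By Lemma \ref{subpair in PQ}, ${\bf PSQ}(n)$ is normal in ${\bf PQ}(n)$, and ${\bf PQ}(n)/{\bf PSQ}(n)$ has Harish--Chandra pair $(e,\mathrm{Mat}_n(\Bbbk)/\mathfrak{sl}_n)$, a purely odd and hence commutative supergroup; therefore $\mathcal D({\bf PQ}(n))\leq{\bf PSQ}(n)$. For the reverse inclusion, $\mathcal D({\bf PQ}(n))$ is normal, so its even part lies between $\mathcal D(\mathrm{PGL}_n)=\mathrm{PGL}_n$ and ${\bf PQ}(n)_{ev}=\mathrm{PGL}_n$, hence equals $\mathrm{PGL}_n$; condition $(3)$ of the normality criterion then forces $\mathrm{PGL}_n$ to act trivially on $\mathrm{Mat}_n(\Bbbk)/\mathrm{Lie}(\mathcal D({\bf PQ}(n)))_{\bar 1}$, i.e.\ $\mathrm{Lie}(\mathcal D({\bf PQ}(n)))_{\bar 1}\supseteq(\mathrm{PGL}_n-1)\mathrm{Mat}_n(\Bbbk)$. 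Since the span of the matrices $gXg^{-1}-X$ is exactly $\mathfrak{sl}_n$, one gets $\mathcal D({\bf PQ}(n))\supseteq{\bf PSQ}(n)$, so the two supersubgroups coincide.

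For part $(2)$, first let $\mathrm{char}\,\Bbbk=0$: by Proposition \ref{SAS in zero char} it suffices that $\mathfrak{psq}(n)$ be simple, which holds for $n\geq3$ (it is the simple ``queer'' Lie superalgebra of Kac's list, cf.\ \cite{kac}). Now let $\mathrm{char}\,\Bbbk=p>0$. Since $\mathrm{PGL}_n\simeq\mathrm{SL}_n/\mu_n$ with $\mu_n$ finite normal and $\mathrm{SL}_n$ a SAS-group, Lemma \ref{when a quotient of SAS is SAS} shows $G={\bf PSQ}(n)_{ev}=\mathrm{PGL}_n$ is SAS; and ${\bf PSQ}(n)$ is not solvable since $\mathrm{PGL}_n$ is not. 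By Proposition \ref{SAPS in positive char} it then remains to check conditions $(1)$ and $(2)$ of Lemma \ref{a very particular case} for $(\mathrm{PGL}_n,\mathfrak{sl}_n)$. Condition $(1)$, $(\mathrm{PGL}_n-1)\mathfrak{sl}_n=\mathfrak{sl}_n$, follows because by Lemma \ref{when sl is simple} either $\mathfrak{sl}_n$ (if $p\nmid n$) or its quotient $\mathfrak{psl}_n$ (if $p\mid n$, using $n\geq3$) is a nontrivial simple $\mathrm{PGL}_n$-module, so $\mathfrak{sl}_n$ admits no nonzero trivial quotient. For condition $(2)$: every nonzero $\mathrm{PGL}_n$-submodule of $\mathfrak{sl}_n$ either equals $\Bbbk I_n$ (possible only when $p\mid n$) or contains a root vector $E_{ij}$ ($i\ne j$); since $[\mathfrak{sl}_n,\Bbbk I_n]\ne0$ (as $p\ne2$) and $[E_{ji},E_{ij}]=\overline{E_{ii}+E_{jj}}\ne0$ in $\mathfrak{pgl}_n$ for $n\geq3$, no nonzero submodule is annihilated by the bracket. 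Hence ${\bf PSQ}(n)$ is SAS.

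The decisive observation for part $(3)$ is that the bracket on $\mathfrak{psq}(2)_{\bar1}$ vanishes identically. Indeed $\mathfrak{psq}(2)_{\bar1}=\mathfrak{sl}_2\subseteq\mathrm{Mat}_2(\Bbbk)$ and, by Cayley--Hamilton, $B^2=-\det(B)I_2$ for any traceless $2\times2$ matrix $B$; polarizing, $BB'+B'B=\bigl(\det B+\det B'-\det(B+B')\bigr)I_2\in\Bbbk I_2$, whose image in $\mathfrak{pgl}_2=\mathfrak{gl}_2/\Bbbk I_2$ is zero. Thus $[\mathfrak{psq}(2)_{\bar1},\mathfrak{psq}(2)_{\bar1}]=0$, so ${\bf PSQ}(2)$ is split: ${\bf PSQ}(2)\simeq{\bf PSQ}(2)_{ev}\ltimes{\bf PSQ}(2)_{odd}$ with ${\bf PSQ}(2)_{odd}$ a normal purely odd supersubgroup. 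As $\dim\mathfrak{sl}_2=3$, we get ${\bf PSQ}(2)_{odd}\simeq(\mathbb{G}_a^-)^3$; taking $\mathbb{R}={\bf PSQ}(2)_{odd}$ gives the assertion, with ${\bf PSQ}(2)/\mathbb{R}\simeq{\bf PSQ}(2)_{ev}$.

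The main obstacle is in part $(1)$: one must argue the reverse inclusion $\mathcal D({\bf PQ}(n))\supseteq{\bf PSQ}(n)$ with care, since it is the \emph{normality} of $\mathcal D({\bf PQ}(n))$ --- not merely commutativity of the quotient --- that pins down the odd part of its Harish--Chandra pair as $(\mathrm{PGL}_n-1)\mathrm{Mat}_n(\Bbbk)=\mathfrak{sl}_n$. The remaining ingredients are routine: the $\mathrm{PGL}_n$-module structure of $\mathfrak{sl}_n$ (Lemma \ref{when sl is simple}) for part $(2)$, and the one-line Cayley--Hamilton identity for part $(3)$.
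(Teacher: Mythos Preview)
Your proof is correct and follows essentially the same route as the paper: Harish--Chandra pairs throughout, Proposition \ref{SAPS in positive char} combined with the conditions of Lemma \ref{a very particular case} for part~(2), and the vanishing of the odd--odd bracket in $\mathfrak{pgl}_2$ for part~(3). The only notable variation is in part~(1): the paper obtains the reverse inclusion directly from $\mathrm{Lie}(\mathcal D({\bf PQ}(n)))\supseteq[\mathfrak{pq}(n),\mathfrak{pq}(n)]=\mathfrak{pgl}_n\oplus\mathfrak{sl}_n$, whereas you invoke condition~(3) of the normality criterion to force the odd part to contain $(\mathrm{PGL}_n-1)\mathrm{Mat}_n(\Bbbk)=\mathfrak{sl}_n$; both arguments are valid and equally short.

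One remark on part~(3): you correctly identify the quotient ${\bf PSQ}(2)/\mathbb{R}$ as ${\bf PSQ}(2)_{ev}=\mathrm{PGL}_2$, and indeed the paper's own proof does not address the quotient at all. The statement's ``$\mathrm{SL}_2$'' appears to be a slip for $\mathrm{PGL}_2$ (equivalently $\mathrm{PSL}_2$), since $\mathrm{SL}_2$ and $\mathrm{PGL}_2$ are not isomorphic over a field of characteristic $\neq 2$.
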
	
	\begin{proof}
		By Lemma \ref{subpair in PQ}, the supergroup ${\bf PQ}(n)/{\bf PSQ}(n)\simeq \mathbb{G}_a^-$ is abelian, hence $\mathcal{D}({\bf PQ}(n))\leq {\bf PSQ}(n)$.  On the other hand, $\mathrm{Lie}(\mathcal{D}({\bf PQ}(n)))$ contains $[\mathfrak{pq}(n), \mathfrak{pq}(n)]=\mathfrak{pgl}_n\oplus\mathfrak{sl}_n$. Thus the Harish-Chandra pair of 
		$\mathcal{D}({\bf PQ}(n))$ has a form $(R , \mathfrak{sl}_n)$, where $R$ contains $\mathcal{D}(\mathrm{PGL}_n)=\mathcal{D}(\mathrm{PSL}_n)=\mathrm{PSL}_n=\mathrm{PGL}_n$.	
		
		If $\mathrm{char}\Bbbk=0$, then \cite[1.1.4]{chengwang} states that $\mathrm{Lie}({\bf PSQ}(n))=\mathfrak{psq}(n)$ is simple, whenever $n\geq 3$. Lemma \ref{SAS is WAS} and Lemma \ref{some simple case} imply $(2)$.   
		
		Let $\mathrm{char}\Bbbk>0$. The group $\mathrm{PGL}_n\simeq \mathrm{PSL}_n$ is SAS, provided $n> 1$. By Proposition \ref{SAPS in positive char} the supergroup ${\bf PSQ}_n$ is SAS if and only if it satisfies the conditions $(1-2)$ from Lemma \ref{a very particular case}.
		
		If $V\subseteq\mathfrak{sl}_n$ is a proper submodule, such that $\mathrm{PGL}_n$ acts trivially on
		$\mathfrak{sl}_n/V$, then $[\mathfrak{pgl}_n, \mathfrak{sl}_n]=\mathfrak{sl}_n\subseteq V$, a contradiction! 
		
		Let $V$ be a nonzero $\mathrm{PGL}_n$-submodule of $\mathfrak{sl}_n$, such that $[V, \mathfrak{sl}_n]=0$. Then  $V$ is a nonzero ideal of $\mathfrak{sl}_n$, regarded as an ordinary Lie algebra, that implies
		$V=\mathfrak{sl}_n$. It remains to observe that if $n\geq 3$, then $[\mathfrak{sl}_n, \mathfrak{sl}_n]$ contains the element $E_{11}+E_{22}=[E_{12}, E_{21}]$, which is not zero in $\mathfrak{pgl}_n$.
		
		Finally, if $n=2$, then the above arguments show that $[\mathfrak{sl}_2, \mathfrak{sl}_2]=0$ in $\mathfrak{pgl}_2$, hence $(1, \mathfrak{sl}_2)$ represents a normal purely odd supersubgroup.
	\end{proof}
	\begin{rem}\label{psq is simple}
		The Lie superalgebra $\mathfrak{psq}(n)$ is simple in odd characteristic too, provided $n\geq 3$. 	
	\end{rem}
	\subsection{The supergroups $\mathrm{OSp}_{\alpha}(4|2)$}
	
	Set $G=\mathrm{SL}_2\times \mathrm{SL}_2\times \mathrm{SL}_2$. As in the subsection 2.3, let $V$ denote the standard two-dimensional $\mathrm{SL}_2$-module. The algebraic group $G$ acts on $V^{\otimes 3}$ as
	\[(g_1, g_2, g_3)(u\otimes v\otimes w)=g_1 u\otimes g_2 v\otimes g_3 w, \text{ for all }g_1, g_2 ,g_3 \in \mathrm{SL}_2, \text{ and }  u, v, w\in V. \] 
	Recall that $\mathfrak{sl}_2\simeq\mathrm{Sym}_2(V)\simeq L(2)$. More precisely, this isomorphism (of $\mathrm{SL}_2$-modules) can be defined as follows. Let $\langle \,  , \, \rangle$ be a skew-symmetric $\mathrm{SL}_2$-invariant form on $V$, such that $\langle v_1, v_2 \rangle =-\langle v_2, v_1\rangle =1$. Then 
	\[[uv, w]=\frac{1}{2}(\langle u, w \rangle v+\langle v, w\rangle u), \text{ for all } u, v, w\in V.\]
	Using this identification, we determine the $G$-equivariant symmetric bilinear map
	$V^{\otimes 3}\times V^{\otimes 3}\to \mathfrak{sl}_2^{\oplus 3}$ by
	\[[u\otimes v\otimes w,  u'\otimes  v'\otimes w']=\]\[\alpha_1 \langle v, v'\rangle \langle w, w'\rangle uu'\oplus \alpha_2 \langle u, u'\rangle \langle w, w'\rangle vv'\oplus \alpha_3 \langle u, u'\rangle \langle v, v'\rangle ww',\]\[ \text{ for all } u, v, w, u', v', w'\in V.\]
	If $x=\sum_{1\leq i_1, i_2, i_3\leq 2} a_{i_1, i_2, i_3} v_{i_1}\otimes v_{i_2}\otimes v_{i_3}$, then the \emph{length} of $x$ is the number of nonzero coefficients $a_{i_1, i_2, i_3}$. The induction on the length of $x$ shows that $[[x, x], x]=0$ is valid for arbitrary $x$, whenever the following identities hold :
	\begin{enumerate}
		\item $[[x, y], z]+[[y, z], x]+[[x, z], y]=0, \text{ for all } x, y, z\in V^{\otimes 3}$;
		\item $[[x, x], x]=0$ for any $x=u\otimes v\otimes w, \text{ where } u, v, w\in V$.
	\end{enumerate} 
	Since $\langle \, , \, \rangle $ is skew-symmetric, the identity $(2)$ is obviously valid. The proof of the first identity can be found in \cite[Lemma 4.2.1]{imuss}. For the reader's convenience, we will provide this proof here. 
	
	Note that $(1)$ needs to be checked on the indecomposable tensors only. Let
	\[x=x_1\otimes x_2\otimes x_3,\, y=y_1\otimes y_2\otimes y_3, \, z=z_1\otimes z_2\otimes z_3 .\]
	Moreover, following \cite[Lemma 4.2.1]{imuss}, one can assume that any two elements from a triple $x_i, y_i, z_i$ are linearly independent, $1\leq i\leq 3$.
	In particular, we have $z_i=a_i x_i +b_i y_i, \text{ where } a_i, b_i\in\Bbbk\setminus 0, \text{ for } 1\leq i\leq 3$. Without loss of generality, one can also assume that
	$\langle x_1, y_1\rangle =\langle x_2, y_2\rangle =\langle x_3, y_3\rangle =1$.
	Then, 
	\[
	\begin{array}{lcl}
		[[x, y], z] & = &	
		\alpha_1(-a_1 x_1+b_1 y_1)\otimes z_2\otimes z_3+\alpha_2 z_1\otimes (-a_2 x_2+b_2 y_2)\otimes z_3\\[2mm]
		&&+\alpha_3  z_1\otimes z_2\otimes (-a_3 x_3+b_3 y_3).
	\end{array}
	\]
	Similarly, we obtain
	\[
	\begin{array}{lcl} [[y, z], x] & = &-\alpha_1a_2 a_3(a_1 x_1+2b_1 y_1)\otimes x_2\otimes x_3-\alpha_2 a_1 a_3x_1\otimes (a_2 x_2+2b_2 y_2)\otimes x_3\\[2mm]
		&&-\alpha_3 a_1 a_2x_1\otimes x_2\otimes (a_3 x_3+2b_3 y_3)
	\end{array}\]
	and 
	\[
	\begin{array}{lcl}
		[[x, z], y] & = & \alpha_1b_2 b_3(2a_1 x_1+b_1 y_1)\otimes y_2\otimes y_3+\alpha_2b_1 b_3 y_1\otimes (2a_2 x_2+b_2 y_2)\otimes y_3\\[2mm]
		&&+\alpha_3b_1 b_2 y_1\otimes y_2\otimes (2a_3 x_3+b_3 y_3).
	\end{array}
	\]
	Summing all up, one sees that
	\[[[x, y], z]+[[y, z], x]+[[x, z], y]=\]
	\[-2(\alpha_1+\alpha_2+\alpha_3)a_1a_2a_3 x_1\otimes x_2\otimes x_3-(\alpha_1+\alpha_2+\alpha_3)a_1b_2a_3 x_1\otimes y_2\otimes x_3\]	
	\[-(\alpha_1+\alpha_2+\alpha_3)a_1a_2b_3 x_1\otimes x_2\otimes y_3+(\alpha_1+\alpha_2+\alpha_3)a_1b_2b_3 x_1\otimes y_2\otimes y_3\]
	\[-(\alpha_1+\alpha_2+\alpha_3)b_1a_2a_3 y_1\otimes x_2\otimes x_3+(\alpha_1+\alpha_2+\alpha_3)b_1b_2a_3 y_1\otimes y_2\otimes x_3\]
	\[+(\alpha_1+\alpha_2+\alpha_3)b_1a_2b_3 y_1\otimes x_2\otimes y_3+2(\alpha_1+\alpha_2+\alpha_3)b_1b_2b_3 y_1\otimes y_2\otimes y_3=0\]
	if and only if $\alpha_1+\alpha_2+\alpha_3=0$. 
	\begin{lm}\label{HC of D(2, 1, a)}
		The above defined $(G, V^{\otimes 3})$ is a Harish-Chandra pair if and only if $\alpha_1+\alpha_2+\alpha_3=0$. The corresponding supergroup is denoted by 
		${\bf \Gamma}(\alpha_1, \alpha_2, \alpha_3)$.	
	\end{lm}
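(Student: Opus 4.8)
The plan is to observe that the first two Harish-Chandra axioms for $(G,V^{\otimes 3})$ hold automatically, regardless of the $\alpha_i$, so that the whole content of the lemma lies in the cube-vanishing axiom $[[x,x],x]=0$, which is precisely what the displayed computations preceding the statement were set up to analyze.

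First I would check that the bilinear map $V^{\otimes 3}\times V^{\otimes 3}\to\mathrm{Lie}(G)=\mathfrak{sl}_2^{\oplus 3}$ is symmetric and $G$-equivariant. Symmetry holds because $\langle\ ,\ \rangle$ is skew-symmetric: interchanging the two arguments replaces $\langle v,v'\rangle\langle w,w'\rangle$ by $\langle v',v\rangle\langle w',w\rangle=\langle v,v'\rangle\langle w,w'\rangle$, while the products $uu'$, $vv'$, $ww'$ are unchanged in the symmetric square. Equivariance follows from the $\mathrm{SL}_2$-invariance of $\langle\ ,\ \rangle$ together with the $\mathrm{SL}_2$-equivariance of the multiplication $V\otimes V\to\mathrm{Sym}_2V\simeq\mathfrak{sl}_2$. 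Consequently $(G,V^{\otimes 3})$ is a Harish-Chandra pair if and only if $[[x,x],x]=0$ for all $x\in V^{\otimes 3}$.

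For the implication ``$\alpha_1+\alpha_2+\alpha_3=0\ \Rightarrow$ Harish-Chandra pair'' I would just assemble the two facts established just before the lemma: the induction on the length of $x$ shows that the two identities $(1)$ and $(2)$ displayed above imply $[[x,x],x]=0$ for every $x$; identity $(2)$ is automatic since $\langle u,u\rangle=0$, and the explicit three-vector computation shows that identity $(1)$ holds exactly when $\alpha_1+\alpha_2+\alpha_3=0$. Hence in that case the third Harish-Chandra axiom holds and we get a Harish-Chandra pair.

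For the converse I would avoid any polarization argument (which would be delicate in characteristic $3$) and instead test the cube condition on a single vector. Taking $x=v_1^{\otimes 3}+v_2^{\otimes 3}$ and using $\langle v_1,v_1\rangle=\langle v_2,v_2\rangle=0$, $\langle v_1,v_2\rangle=1$, only the cross term of $[x,x]$ survives, and its $i$-th component equals $2\alpha_i\,v_1v_2\in\mathfrak{sl}_2$. Since $v_1v_2$ acts on $V$ by $v_1\mapsto-\tfrac12 v_1$ and $v_2\mapsto\tfrac12 v_2$, acting on $x$ slot by slot gives $[[x,x],x]=(\alpha_1+\alpha_2+\alpha_3)(v_2^{\otimes 3}-v_1^{\otimes 3})$. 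As $v_2^{\otimes 3}-v_1^{\otimes 3}\neq 0$, the cube condition forces $\alpha_1+\alpha_2+\alpha_3=0$. The only point requiring care is the bookkeeping of the factors $\tfrac12$ coming from the identification $\mathrm{Sym}_2V\simeq\mathfrak{sl}_2$, which is legitimate since $\mathrm{char}\,\Bbbk\neq 2$; there is no real obstacle here, the core cubic identity having already been carried out in the paragraphs preceding the statement.
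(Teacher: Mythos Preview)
Your proposal is correct. The ``if'' direction is exactly what the paper does: the discussion preceding the lemma establishes that identities (1) and (2) imply $[[x,x],x]=0$, that (2) is automatic, and that (1) holds precisely when $\alpha_1+\alpha_2+\alpha_3=0$; the lemma has no separate proof environment in the paper.

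For the converse you take a genuinely different route. The paper leaves this implicit, the intended argument presumably being that $[[v,v],v]=0$ for all $v$ forces the Jacobi identity (1), which in turn forces $\alpha_1+\alpha_2+\alpha_3=0$ by the displayed computation. Your worry about polarization in characteristic $3$ is understandable but actually unnecessary here: expanding $[[x+sy+tz,x+sy+tz],x+sy+tz]$ and reading off the coefficient of $st$ gives $2([[x,y],z]+[[y,z],x]+[[x,z],y])$, and over the infinite field $\Bbbk$ this coefficient must vanish---no division by $3$ occurs. That said, your direct test with $x=v_1^{\otimes 3}+v_2^{\otimes 3}$ is a clean and self-contained shortcut: it avoids invoking the full Jacobi computation and gives the obstruction $(\alpha_1+\alpha_2+\alpha_3)(v_2^{\otimes 3}-v_1^{\otimes 3})$ in two lines. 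Both approaches are valid; yours is more elementary, while the paper's (implicit) route has the advantage of reusing the Jacobi calculation already on the page.
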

	Note that \[{\bf\Gamma}(\alpha_1, \alpha_2, \alpha_3)\simeq {\bf\Gamma}(\alpha_{\sigma(1)}, \alpha_{\sigma(2)}, \alpha_{\sigma(3)}) \ \mbox{and} \ {\bf\Gamma}(\alpha_1, \alpha_2, \alpha_3)\simeq
	{\bf\Gamma}(\lambda\alpha_1, \lambda\alpha_2, \lambda\alpha_3)\] for any permutation $\sigma\in S_3$ and any $\lambda\in\Bbbk\setminus 0$. In particular, we have a one-parameter family
	of supergroups ${\bf\Gamma}(\alpha, 1, -1-\alpha), \text{ where } \alpha\in\Bbbk$. Following  \cite{l1, sergan1} we denote the latter supergroup by $\mathrm{OSp}_{\alpha}(4|2)$, as well as its Lie superalgebra by
	$\mathfrak{osp}_{\alpha}(4|2)$. In the literature this Lie superalgebra is also known as $\mathrm{D}(2, 1, \alpha)$. 
	
	Since the Lie superalgebra $\mathfrak{osp}_{\alpha}(4|2)$ is simple, provided $\alpha\neq 0, -1$ (cf. \cite[Sections 10-12]{BGL4}), Lemma \ref{some simple case} implies that $\mathrm{OSp}_{\alpha}(4|2)$ is WAS, but it is not SAS in positive characteristic. 
	\subsection{The exceptional supergroup $\mathrm{AG}(2)$}
	
	Set $G=\mathrm{SL}_2\times \mathrm{G}_2$. Recall that the \emph{Chevalley group} $\mathrm{G}_2$ is isomorphic to the automorphism group of the algebra ${\bf O}$ of split octonions (see \cite{cart, sss} for more detail).  
	Let $t$ and $n$ denote the \emph{trace} and the \emph{norm} on ${\bf O}$, so that any element $a\in {\bf O}$ satisfies 
	\[a^2-t(a)a+n(a)=0.\]   
	
	The group $G$ acts naturally on the tensor product $W=V\otimes {\bf O}_0$, where ${\bf O}_0$ is the subspace  of traceless octonions, that is in turn an irreducible
	$\mathrm{G}_2$-module of the highest (first fundamental) weight $\omega_1$. We have
	\[V^{\otimes 2}=\mathrm{Sym}_2(V)\oplus\Lambda^2(V)=L(2)\oplus L(0).\]
	Similarly, ${\bf O}_0^{\otimes 2}=\mathrm{Sym}_2({\bf O}_0)\oplus\Lambda^2({\bf O}_0)$, where the $\mathrm{G}_2$-module $\mathrm{Sym}_2({\bf O}_0)$ is isomorphic to $L(0)\oplus L(2\omega_1)$ , whenever $\mathrm{char}\Bbbk\neq 7$, otherwise it is indecomposable with composition factors $L(0)$ and $L(2\omega_1)$. Moreover, in the latter case $L(0)$ is isomorphic to both the top and the socle of $\mathrm{Sym}_2({\bf O}_0)$, and $L(2\omega_1)$ appears in the middle (see \cite[Remark 5 and Proposition 5]{zubs}).  
	Further, $\Lambda^2({\bf O}_0)\simeq L(\omega_1)\oplus L(\omega_2)$, provided $\mathrm{char}\Bbbk\neq 3$, otherwise it is indecomposable with composition factors $L(\omega_1)$ and $L(\omega_2)$, so that $L(\omega_1)$ is isomorphic to both the top and the socle of $\Lambda^2({\bf O}_0)$, and $L(\omega_2)$ appears in the middle. 
	\begin{rem}
		In \cite{zubs} the fundamental dominant weights are put in the opposite order, i.e. $\omega_1$ is denoted by $\lambda_2$ and $\omega_2$ is denoted by $\lambda_1$.  	
	\end{rem}
	Let $\mathfrak{g}$ denote $\mathrm{Lie}(G)\simeq\mathfrak{sl}_2\oplus\mathfrak{g}_2$. 
	\begin{pr}\label{AG(2)}
		We have
		\[\mathrm{Hom}_{G}(\mathrm{Sym}_2(W), \mathfrak{g})=\mathrm{Hom}_{\mathfrak{g}}(\mathrm{Sym}_2(W), \mathfrak{g})\]	
		and both spaces are two-dimensional.
	\end{pr}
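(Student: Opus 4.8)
The plan is to transport everything to the external tensor-product picture $G=\mathrm{SL}_2\times\mathrm{G}_2$, $W=V\otimes{\bf O}_0$. Since $\mathrm{char}\,\Bbbk\neq 2$ one has the $G$-module decomposition $\mathrm{Sym}_2(W)\simeq\bigl(\mathrm{Sym}_2(V)\otimes\mathrm{Sym}_2({\bf O}_0)\bigr)\oplus\bigl(\Lambda^2(V)\otimes\Lambda^2({\bf O}_0)\bigr)$, while $\mathfrak{g}=(\mathfrak{sl}_2\boxtimes\Bbbk)\oplus(\Bbbk\boxtimes\mathfrak{g}_2)$, where $\mathfrak{sl}_2\simeq L(2)$ as an $\mathrm{SL}_2$-module (with $\mathrm{G}_2$ acting trivially) and symmetrically for $\mathfrak{g}_2$. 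First I would record the elementary K\"unneth identity $\mathrm{Hom}_{G_1\times G_2}(M_1\boxtimes M_2,\,N_1\boxtimes N_2)\simeq\mathrm{Hom}_{G_1}(M_1,N_1)\otimes\mathrm{Hom}_{G_2}(M_2,N_2)$ — obtained by passing to invariants one factor at a time, hence valid for arbitrary finite-dimensional modules, and equally for $\mathfrak{g}_1\times\mathfrak{g}_2$-modules — and then expand $\mathrm{Hom}_G(\mathrm{Sym}_2(W),\mathfrak{g})$ into the four resulting summands.

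Two of these summands vanish because $\mathrm{Hom}_{\mathrm{SL}_2}(\mathrm{Sym}_2(V),\Bbbk)=\mathrm{Hom}_{\mathrm{SL}_2}(L(2),L(0))=0$ and $\mathrm{Hom}_{\mathrm{SL}_2}(\Lambda^2(V),L(2))=\mathrm{Hom}_{\mathrm{SL}_2}(L(0),L(2))=0$. The two surviving ones are $\mathrm{Hom}_{\mathrm{SL}_2}(L(2),L(2))\otimes\mathrm{Hom}_{\mathrm{G}_2}(\mathrm{Sym}_2({\bf O}_0),\Bbbk)$ and $\mathrm{Hom}_{\mathrm{SL}_2}(L(0),L(0))\otimes\mathrm{Hom}_{\mathrm{G}_2}(\Lambda^2({\bf O}_0),\mathfrak{g}_2)$; the $\mathrm{SL}_2$-factors are one-dimensional, so it remains to show both $\mathrm{G}_2$-factors are one-dimensional. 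Now $\mathrm{Hom}_{\mathrm{G}_2}(\mathrm{Sym}_2({\bf O}_0),\Bbbk)$ is the space of $\mathrm{G}_2$-invariant symmetric bilinear forms on ${\bf O}_0$; since ${\bf O}_0$ is absolutely irreducible in odd characteristic, $\mathrm{Hom}_{\mathrm{G}_2}({\bf O}_0,{\bf O}_0^{*})$ is at most one-dimensional, and the octonion norm form is a nonzero symmetric element, so this space is exactly one-dimensional for every odd $p$. For $\mathrm{Hom}_{\mathrm{G}_2}(\Lambda^2({\bf O}_0),\mathfrak{g}_2)$ I would use the inner-derivation morphism $\Lambda^2({\bf O}_0)\to\mathrm{Der}({\bf O})=\mathfrak{g}_2$, $x\wedge y\mapsto D_{x,y}$, which is nonzero and $\mathrm{G}_2$-equivariant; combining it with the decompositions of $\Lambda^2({\bf O}_0)$ and $\mathfrak{g}_2$ recalled before the statement (for $p\neq 3$ both are semisimple and $\mathfrak{g}_2\simeq L(\omega_2)$ occurs exactly once in $\Lambda^2({\bf O}_0)$; for $p=3$ one uses the uniserial structure $L(\omega_1)\,|\,L(\omega_2)\,|\,L(\omega_1)$ of $\Lambda^2({\bf O}_0)$ together with the extension structure of $\mathfrak{g}_2$, so that every nonzero morphism has image the socle copy of $\mathfrak{psl}_3$), a head/socle comparison shows this $\mathrm{Hom}$-space is one-dimensional. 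Hence $\dim\mathrm{Hom}_G(\mathrm{Sym}_2(W),\mathfrak{g})=1+1=2$.

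For the equality with $\mathrm{Hom}_{\mathfrak{g}}$, the inclusion $\mathrm{Hom}_G\subseteq\mathrm{Hom}_{\mathfrak{g}}$ is automatic, so it suffices to bound $\dim\mathrm{Hom}_{\mathfrak{g}}(\mathrm{Sym}_2(W),\mathfrak{g})\leq 2$. I would repeat the decomposition above over $\mathfrak{g}=\mathfrak{sl}_2\oplus\mathfrak{g}_2$: two of the four $\mathfrak{g}$-summands again vanish since $\mathrm{Hom}_{\mathfrak{sl}_2}(L(2),L(0))=\mathrm{Hom}_{\mathfrak{sl}_2}(L(0),L(2))=0$, and the two remaining $\mathfrak{g}_2$-factors are bounded above by one just as before. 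The only extra ingredient is that ${\bf O}_0$, its exterior square, and $\mathfrak{g}_2$ are all \emph{restricted} $\mathrm{G}_2$-representations with $p$-restricted composition factors and no Frobenius twist occurring (for odd $p$), so a $\mathfrak{g}_2$-morphism between them is automatically a morphism over the restricted enveloping algebra $u(\mathfrak{g}_2)=\mathrm{Dist}(\mathrm{G}_2)_1$, and the relevant $u(\mathfrak{g}_2)$-submodule lattices coincide with the $\mathrm{G}_2$-ones. Thus $2=\dim\mathrm{Hom}_G\leq\dim\mathrm{Hom}_{\mathfrak{g}}\leq 2$, which gives both assertions.

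The main obstacle is the small-characteristic analysis of $\mathrm{Hom}_{\mathrm{G}_2}(\Lambda^2({\bf O}_0),\mathfrak{g}_2)$ (and its $\mathfrak{g}_2$-analogue) when $p=3$, together with the companion bookkeeping for $\mathrm{Sym}_2({\bf O}_0)$ when $p=7$ — precisely the characteristics in which these $\mathrm{G}_2$-modules fail to be semisimple, which is why their Loewy structure was spelled out (following \cite{zubs}) before the proposition. A point worth flagging is that in characteristic $3$ the inner-derivation map is no longer surjective onto $\mathfrak{g}_2$ — its image is the $7$-dimensional ideal $\simeq\mathfrak{psl}_3$ — and it is exactly this collapse that keeps the $\mathrm{Hom}$-space one-dimensional rather than larger. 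Everything else (the tensor decompositions, the K\"unneth identity, the vanishing of two summands, and the computation of $\mathrm{Hom}_{\mathrm{G}_2}(\mathrm{Sym}_2({\bf O}_0),\Bbbk)$) is uniform in the characteristic and routine.
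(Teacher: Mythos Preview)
Your approach is essentially the same as the paper's: both decompose $\mathrm{Sym}_2(W)$ and $\mathfrak{g}$ along the product $G=\mathrm{SL}_2\times\mathrm{G}_2$ and reduce to one-dimensional Hom computations over each factor. Your explicit K\"unneth formula is equivalent to the paper's coinvariants language $\mathrm{Hom}_{\mathrm{SL}_2}(W^{\otimes 2}/(\mathrm{G}_2-1)W^{\otimes 2},\mathfrak{sl}_2)$ etc., and the appeal to $p$-restrictedness of the relevant simples to pass from $\mathrm{G}_2$ to $\mathfrak{g}_2$ matches the paper exactly.

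There is, however, a genuine error in your characteristic-$3$ analysis. You assert that every nonzero $\mathrm{G}_2$-morphism $\Lambda^2({\bf O}_0)\to\mathfrak{g}_2$ has image the $7$-dimensional socle $\simeq\mathfrak{psl}_3$, and that this ``collapse'' is what keeps the Hom-space one-dimensional. This is backwards. In characteristic $3$ the module $\mathfrak{g}_2\simeq H^0(\omega_2)$ is uniserial with socle $L(\omega_2)$ and head $L(\omega_1)$ (this follows from the character count: the long-root weights give $L(\omega_2)$, the short-root weights give $L(\omega_1)$). Since the head of $\Lambda^2({\bf O}_0)$ is $L(\omega_1)\not\simeq L(\omega_2)$, there is \emph{no} surjection $\Lambda^2({\bf O}_0)\twoheadrightarrow L(\omega_2)$, hence no nonzero morphism can have image the socle. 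Instead, any nonzero $\phi$ must be surjective onto $\mathfrak{g}_2$, with $\ker\phi$ the $7$-dimensional socle $L(\omega_1)\subset\Lambda^2({\bf O}_0)$; one-dimensionality then follows because $\mathrm{End}_{\mathrm{G}_2}(\mathfrak{g}_2)=\Bbbk$ (any endomorphism of an indecomposable length-two module with non-isomorphic head and socle is either zero or invertible). Your overall structure and conclusion survive, but the $p=3$ bookkeeping you flag as the ``main obstacle'' needs this correction.
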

	\begin{proof}
		There is
		\[\mathrm{Hom}_{G}(W^{\otimes 2}, \mathfrak{g})\simeq \mathrm{Hom}_{\mathrm{G}}(W^{\otimes 2}, \mathfrak{sl}_2)\oplus \mathrm{Hom}_{G}(W^{\otimes 2}, \mathfrak{g}_2)\simeq\]
		\[\mathrm{Hom}_{\mathrm{SL}_2}(W^{\otimes 2}/(\mathrm{G}_2-1)W^{\otimes 2}, \mathfrak{sl}_2)\oplus \mathrm{Hom}_{\mathrm{G}_2}(W^{\otimes 2}/(\mathrm{SL}_2-1)W^{\otimes 2}, \mathfrak{g}_2)\]
		and
		\[\mathrm{Hom}_{\mathfrak{g}}(W^{\otimes 2}, \mathfrak{g})\simeq \mathrm{Hom}_{\mathfrak{g}}(W^{\otimes 2}, \mathfrak{sl}_2)\oplus \mathrm{Hom}_{\mathfrak{g}}(W^{\otimes 2}, \mathfrak{g}_2)\simeq\]
		\[\mathrm{Hom}_{\mathfrak{sl}_2}(W^{\otimes 2}/\mathfrak{g}_2W^{\otimes 2}, \mathfrak{sl}_2)\oplus \mathrm{Hom}_{\mathfrak{g}_2}(W^{\otimes 2}/\mathfrak{sl}_2W^{\otimes 2}, \mathfrak{g}_2).\]
		By the above remark on a composition series of ${\bf O}_0$, we have 
		\[W^{\otimes 2}/(\mathrm{G}_2-1)W^{\otimes 2}\simeq V^{\otimes 2}\otimes \mathrm{Sym}_2({\bf O}_0)/(\mathrm{G}_2-1)\mathrm{Sym}_2({\bf O}_0),\]
		where $\mathrm{Sym}_2({\bf O}_0)/(\mathrm{G}_2-1)\mathrm{Sym}_2({\bf O}_0)\simeq L(0)=\Bbbk$. Thus the space
		\[\mathrm{Hom}_{\mathrm{SL}_2}(W^{\otimes 2}/(\mathrm{G}_2-1)W^{\otimes 2}, \mathfrak{sl}_2)=\mathrm{Hom}_{\mathrm{SL}_2}(L(0)\oplus L(2), \mathfrak{sl}_2)=\mathrm{Hom}_{\mathrm{SL}_2}(L(2), \mathfrak{sl}_2)\]
		is one-dimensional. Since the highest weights of the aforementioned irreducible $\mathrm{G}_2$-modules are $p$-restricted, they are irreducible as $\mathfrak{g}_2$-modules as well.
		Therefore, we obtain that the space
		\[\mathrm{Hom}_{\mathfrak{sl}_2}(W^{\otimes 2}/\mathfrak{g}_2W^{\otimes 2}, \mathfrak{sl}_2)=\mathrm{Hom}_{\mathfrak{sl}_2}(L(2), \mathfrak{sl}_2)\]	
		is one-dimensional also, hence
		\[\mathrm{Hom}_{\mathrm{SL}_2}(W^{\otimes 2}/(\mathrm{G}_2-1)W^{\otimes 2}, \mathfrak{sl}_2)=\mathrm{Hom}_{\mathfrak{sl}_2}(W^{\otimes 2}/\mathfrak{g}_2W^{\otimes 2}, \mathfrak{sl}_2)\]
		and morphisms from both spaces factor through $\mathrm{Sym}_2(V)\otimes\mathrm{Sym}_2({\bf O}_0)\subseteq \mathrm{Sym}_2(W)$. 
		
		Finally, we have 
		\[\mathrm{Hom}_{\mathrm{G}_2}(W^{\otimes 2}/(\mathrm{SL}_2-1)W^{\otimes 2}, \mathfrak{g}_2)\simeq \mathrm{Hom}_{\mathrm{G}_2}(\Lambda^2({\bf O}_0)/(\mathrm{G}_2-1)\Lambda^2({\bf O}_0), \mathfrak{g}_2)\]
		as well as 
		\[\mathrm{Hom}_{\mathfrak{g}_2}(W^{\otimes 2}/(\mathfrak{sl}_2W^{\otimes 2}, \mathfrak{g}_2)\simeq \mathrm{Hom}_{\mathrm{G}_2}(\Lambda^2({\bf O}_0)/\mathfrak{g}_2\Lambda^2({\bf O}_0), \mathfrak{g}_2).\]
		It remains to note that both $\Lambda^2({\bf O}_0)/(\mathrm{G}_2-1)\Lambda^2({\bf O}_0)$ and $\Lambda^2({\bf O}_0)/(\mathfrak{g}_2\Lambda^2({\bf O}_0)$ are isomorphic to $L(\omega_2)$,
		and $\mathfrak{g}_2\simeq L(\omega_2)$ provided $\mathrm{char}\Bbbk\neq 3$, otherwise $L(\omega_2)$ is isomorphic to the socle of $\mathfrak{g}_2\simeq H^0(\omega_2)$.  Finally, 
		all morphisms from both spaces factor through $\Lambda^2(V)\otimes\Lambda^2({\bf O}_0)$, that is naturally embedded into $\mathrm{Sym}_2(W)$. 
	\end{proof}
	As it is well known (cf. \cite[Sections 10-12]{BGL4}), there is  $\phi\in \mathrm{Hom}_{\mathfrak{g}}(\mathrm{Sym}_2(W), \mathfrak{g})$, which determines on the superspace $\mathfrak{g}\oplus W$ the structure of a simple Lie superalgebra. By Proposition \ref{AG(2)}, $\phi$ determines the structure of Harish-Chandra pair on $(G, W)$, whence an algebraic supergroup, that is denoted by $\mathrm{AG}(2)$. It is clear that
	$\mathrm{AG}(2)$ is WAS, but not SAS in positive characteristic. The Lie superalgebra $\mathfrak{ag}(2)$ of $\mathrm{AG}(2)$ is also denoted by $\mathrm{G}(3)$.   
	
	\subsection{The exceptional supergroup $\mathrm{AB}(3)$}
	
	Set $G=\mathrm{SL}_2\times\mathrm{Spin}(7)$. Let $q$ denote the symmetric bilinear form on ${\bf O}$, associated with the norm $n$. Then $\mathrm{SO}(8)=\mathrm{SO}(q)$ and
	$\mathrm{SO}(7)\simeq\mathrm{Stab}_{\mathrm{SO}(8)}(1_{\bf O})$. Moreover, the canonical epimorphism $\pi : \mathrm{SO}(8)\to\mathrm{PSO}(8)$ maps $\mathrm{SO}(7)$ isomorphically onto
	a closed subgroup of $\mathrm{PSO}(8)$. If $\rho$ is the \emph{graph (or triality)} automorphism of $\mathrm{PSO}(8)$ of order $3$, then $\pi^{-1}(\rho(\mathrm{SO}(7)))\simeq \mathrm{Spin}(7)$ (cf. \cite{zubs}).  Observe that $\mathrm{Lie}(\mathrm{Spin}(7))=d\rho(\mathfrak{so}(7))\simeq\mathfrak{so}(7)$ and ${\bf O}$, regarded as a \emph{twisted} $\mathfrak{so}(7))$-module, has the highest weight $\omega_3$ (denoted in \cite{zubs} by $\lambda_3'$). In other words, it is nothing else but the \emph{spinor representation} of
	$\mathfrak{so}(7)$ (cf. \cite{BGL3}).  
	
	Consider $W=V\otimes {\bf O}$ as a natural $\mathrm{SL}_2\times\mathrm{Spin}(7)$-module. The center $\{\pm I_8\}$ of $\mathrm{Spin}(7)$ acts on ${\bf O}^{\otimes 2}$ trivially, hence
	${\bf O}^{\otimes 2}$ has the natural structure of $\mathrm{SO}(7)$-module. 
	Using \cite[Proposition 2, Propositions 14 and 15]{zubs}, one immediately sees 
	\[\mathrm{Sym}_2({\bf O})\simeq L(0)\oplus L(2\omega_3), \ \Lambda^2({\bf O})\simeq L(\omega_1)\oplus L(\omega_2).\]
	Let $\mathfrak{g}$ denote $\mathrm{Lie}(G)\simeq\mathfrak{sl}_2\oplus\mathfrak{so}(7)$.
	\begin{pr}\label{AB(3)}
		We have
		\[\mathrm{Hom}_{G}(\mathrm{Sym}_2(W), \mathfrak{g})=\mathrm{Hom}_{\mathfrak{g}}(\mathrm{Sym}_2(W), \mathfrak{g})\]	
		and both spaces are two-dimensional.	
	\end{pr}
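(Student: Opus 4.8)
The plan is to run the proof of Proposition~\ref{AG(2)} again, replacing the triple $(\mathrm{G}_2,{\bf O}_0,\mathfrak{g}_2)$ by $(\mathrm{Spin}(7),{\bf O},\mathfrak{so}(7))$ and feeding in the decompositions of $\mathrm{Sym}_2({\bf O})$ and $\Lambda^2({\bf O})$ recorded just before the statement. Since $G=\mathrm{SL}_2\times\mathrm{Spin}(7)$ and $\mathfrak{g}=\mathrm{Lie}(G)\simeq\mathfrak{sl}_2\oplus\mathfrak{so}(7)$, I would first split
\[\mathrm{Hom}_G(W^{\otimes 2},\mathfrak{g})\simeq\mathrm{Hom}_G(W^{\otimes 2},\mathfrak{sl}_2)\oplus\mathrm{Hom}_G(W^{\otimes 2},\mathfrak{so}(7)),\]
and likewise for $\mathrm{Hom}_{\mathfrak{g}}$. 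It then suffices to show that each of the four summands is one-dimensional, that the $G$-version and the $\mathfrak{g}$-version of each agree, and that every morphism involved annihilates $\Lambda^2(W)$, so that it restricts to $\mathrm{Sym}_2(W)$ without loss of dimension.

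For the $\mathfrak{sl}_2$-summand, since $\mathrm{Spin}(7)$ acts trivially on $\mathfrak{sl}_2$, any such $G$-morphism (resp. $\mathfrak{g}$-morphism) $W^{\otimes 2}\to\mathfrak{sl}_2$ factors through the maximal trivial $\mathrm{Spin}(7)$-quotient $W^{\otimes 2}/(\mathrm{Spin}(7)-1)W^{\otimes 2}$ (resp. through $W^{\otimes 2}/\mathfrak{so}(7)W^{\otimes 2}$). Writing $W^{\otimes 2}=V^{\otimes 2}\otimes{\bf O}^{\otimes 2}$ and using ${\bf O}^{\otimes 2}=\mathrm{Sym}_2({\bf O})\oplus\Lambda^2({\bf O})\simeq(L(0)\oplus L(2\omega_3))\oplus(L(\omega_1)\oplus L(\omega_2))$, the maximal trivial $\mathrm{Spin}(7)$-quotient of ${\bf O}^{\otimes 2}$ is the copy of $L(0)$ sitting inside $\mathrm{Sym}_2({\bf O})$, and this holds equally over $\mathfrak{so}(7)$ because $\omega_1,\omega_2,\omega_3,2\omega_3$ are $p$-restricted for odd $p$. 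One is then reduced to $\mathrm{Hom}_{\mathrm{SL}_2}(V^{\otimes 2},\mathfrak{sl}_2)=\mathrm{Hom}_{\mathrm{SL}_2}(L(2)\oplus L(0),L(2))$, which is one-dimensional and coincides with $\mathrm{Hom}_{\mathfrak{sl}_2}(V^{\otimes 2},\mathfrak{sl}_2)$ since $p>2$ keeps $\mathfrak{sl}_2\simeq L(2)$ irreducible over $\mathfrak{sl}_2$ and the decomposition $V^{\otimes 2}=\mathrm{Sym}_2(V)\oplus\Lambda^2(V)$ remains valid there; these morphisms factor through $\mathrm{Sym}_2(V)\otimes\mathrm{Sym}_2({\bf O})\subseteq\mathrm{Sym}_2(W)$.

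For the $\mathfrak{so}(7)$-summand, dually, such a morphism factors through $W^{\otimes 2}/(\mathrm{SL}_2-1)W^{\otimes 2}\simeq\Lambda^2(V)\otimes{\bf O}^{\otimes 2}$, hence through $\mathrm{Hom}_{\mathrm{Spin}(7)}({\bf O}^{\otimes 2},\mathfrak{so}(7))$. As $\mathfrak{so}(7)\simeq L(\omega_2)$ is irreducible in odd characteristic and shares no composition factor with $\mathrm{Sym}_2({\bf O})\simeq L(0)\oplus L(2\omega_3)$, this space equals $\mathrm{Hom}_{\mathrm{Spin}(7)}(\Lambda^2({\bf O}),L(\omega_2))=\mathrm{Hom}_{\mathrm{Spin}(7)}(L(\omega_1)\oplus L(\omega_2),L(\omega_2))$, which is one-dimensional; again it agrees with the $\mathfrak{so}(7)$-version by $p$-restrictedness, and the morphisms factor through $\Lambda^2(V)\otimes\Lambda^2({\bf O})\subseteq\mathrm{Sym}_2(W)$ (using $\mathrm{Sym}_2(V\otimes{\bf O})=\mathrm{Sym}_2(V)\otimes\mathrm{Sym}_2({\bf O})\oplus\Lambda^2(V)\otimes\Lambda^2({\bf O})$).

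Combining the two summands, $\mathrm{Hom}_G(W^{\otimes 2},\mathfrak{g})$ and $\mathrm{Hom}_{\mathfrak{g}}(W^{\otimes 2},\mathfrak{g})$ are both two-dimensional and all morphisms involved vanish on $\Lambda^2(W)$, whence $\mathrm{Hom}_G(\Lambda^2(W),\mathfrak{g})=\mathrm{Hom}_{\mathfrak{g}}(\Lambda^2(W),\mathfrak{g})=0$ and both $\mathrm{Hom}_G(\mathrm{Sym}_2(W),\mathfrak{g})$ and $\mathrm{Hom}_{\mathfrak{g}}(\mathrm{Sym}_2(W),\mathfrak{g})$ are two-dimensional; finally $\mathrm{Hom}_G(\mathrm{Sym}_2(W),\mathfrak{g})\subseteq\mathrm{Hom}_{\mathfrak{g}}(\mathrm{Sym}_2(W),\mathfrak{g})$ together with equality of dimensions gives the claim. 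The only step that is not bookkeeping is the passage between group-module and Lie-algebra-module structures, which rests on the decompositions of ${\bf O}$, $\mathrm{Sym}_2({\bf O})$ and $\Lambda^2({\bf O})$ being into $p$-restricted irreducibles and on the irreducibility of $\mathfrak{so}(7)$ in odd characteristic; I expect the main point to be checking that no exceptional small-characteristic phenomena occur for $\mathrm{Spin}(7)$ (as $\mathrm{char}\,\Bbbk\in\{3,7\}$ did for $\mathrm{G}_2$), which is ensured in odd characteristic by the structural results of \cite{zubs} quoted before the statement.
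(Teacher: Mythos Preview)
Your proposal is correct and is precisely the approach taken in the paper: the paper's proof of Proposition~\ref{AB(3)} consists of the single sentence ``Using the above comments and the fact that $\mathfrak{so}(7)\simeq L(\omega_2)$, one can mimic the proof of Proposition~\ref{AG(2)},'' and you have spelled out exactly that mimicry in full. The only minor remark is that, unlike the $\mathrm{G}_2$ case, the decompositions $\mathrm{Sym}_2({\bf O})\simeq L(0)\oplus L(2\omega_3)$ and $\Lambda^2({\bf O})\simeq L(\omega_1)\oplus L(\omega_2)$ are genuine direct sums in all odd characteristics (as stated just before the proposition, citing \cite{zubs}), so the small-characteristic indecomposability issues you anticipate at the end do not in fact arise here.
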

	\begin{proof}
		Using the above comments and the fact that $\mathfrak{so}(7)\simeq L(\omega_2)$, one can mimic the proof of Proposition  \ref{AG(2)}.	
	\end{proof}
	Again, there is  $\phi\in \mathrm{Hom}_{\mathfrak{g}}(\mathrm{Sym}_2(W), \mathfrak{g})$, which determines on the superspace $\mathfrak{g}\oplus W$ the structure of a simple Lie superalgebra (cf. \cite[Sections 10-12]{BGL4}). By Proposition \ref{AB(3)}, $\phi$ determines the structure of Harish-Chandra pair on $(G, W)$. The correspnding supergroup is denoted by $\mathrm{AB}(3)$. Since its Lie superalgebra $\mathfrak{ab}(3)$ is simple, $\mathrm{AB}(3)$ is WAS, but not SAS in positive characteristic. 
	
	The Lie superalgebra $\mathfrak{ab}(3)$ is also known as $\mathrm{F}(4)$.
	
	\section{The new $10|12$-dimensional SAS-supergroup}
	
	In this section, we show that the simple Lie superalgebra $\mathfrak{brj}(2;5)$ of super-dimension $10|12$, constructed in \cite{BGL3}, is algebraic. This means that there is a SAS-supergroup that has a Lie superalgebra isomorphic to $\mathfrak{brj}(2;5)$. In particular, this supergroup does not belong to Kac's list of known almost-simple supergroups.
	
	Let $G$ be the symplectic group $\mathrm{Sp}_4$. Recall that for any commutative algebra $R$, the group $G(R)$ consists of matrices of the form: 
	\[\left(\begin{array}{cc}
		A & B
		\\
		C & D	
	\end{array}\right), \text{ where } A, B, C, D\in \mathrm{M}_2(R), \text{ and } \]
	\[A^t C-C^t A=0, \ A^t D-C^t B=I_2, \ B^tD-D^t B=0.\]
	We fix the maximal torus $T$ consisting of matrices
	\[\left(\begin{array}{cc}
		A & 0 
		\\
		0 & A^{-1}	
	\end{array}\right), \ \]
	where 
	\[A=\left(\begin{array}{cc}
		t_1 & 0
		\\
		0 & t_2	
	\end{array}\right), \ t_1, t_2\in R^* .\]
	The Lie algebra $\mathfrak{g}=\mathrm{Lie}(G)$ consists of matrices of the form:
	\[\left(\begin{array}{cc}
		A & B
		\\
		C & D	
	\end{array}\right), \text{ where } A, B, C, D\in\mathrm{M}_2(\Bbbk), \text{ and }\]
	\[A=-D^t, \ B=B^t, C=C^t.\]
	
	Let $\epsilon_1, \epsilon_2$ be a standard basis of the character group $X(T)\simeq \mathrm{Z}^2$. Then we have the root decompostion
	\[\mathfrak{g}=\mathrm{Lie}(T)\oplus (\oplus_{\alpha\in\Delta}\Bbbk x_{\alpha}),\]
	where $\Delta=\{\pm 2\epsilon_1, \pm 2\epsilon_2, \pm(\epsilon_1\pm\epsilon_2)\}$ and 
	\[x_{2\epsilon_1}=E_{13}, \ x_{2\epsilon_2}=E_{24}, \ x_{\epsilon_1-\epsilon_2}=E_{12}-E_{43}, \ x_{\epsilon_1+\epsilon_2}=E_{14}+E_{23},\]
	\[x_{-2\epsilon_1}=E_{31}, \ x_{-2\epsilon_2}=E_{42}, \ x_{-\epsilon_1+\epsilon_2}=E_{21}-E_{34}, \ x_{-\epsilon_1-\epsilon_2}=E_{41}+E_{32},\]
	\[\mathrm{Lie}(T)=\Bbbk h_1\oplus \Bbbk h_2, \ h_1=E_{11}-E_{33}, \ h_2=E_{22}-E_{44}. \]

	Let $X_{\alpha}$ denote the root subgroup of $G$, such that $\mathrm{Lie}(X_{\alpha})=\Bbbk x_{\alpha}, \alpha\in\Delta$. Since each $x_{\alpha}$ satisfies $x_{\alpha}^2=0$, the group $X_{\alpha}(R)$ consits of matrices $I_4+tx_{\alpha}, t\in R$. 
	
	If $V$ is a $G$-module and $v\in V_{\lambda}$, then for any $\alpha\in\Delta$ there is 
	\[X_{\alpha}(t)(v)=\sum_{i\geq 0} t^i v_{\lambda+i\alpha}, \] where each $v_{\lambda+i\alpha}$ belongs to $V_{\lambda+i\alpha}$, $v_{\lambda}=v$ and $v_{\lambda+\alpha}=x_{\alpha}(v)$
	(cf. \cite[Proposition 22.14]{milne}).
	
	We choose the set of positive roots $\Delta^+ = \{2\epsilon_1, 2\epsilon_2, \epsilon_1\pm\epsilon_2\}$. Then the simple roots are $\alpha_1=\epsilon_1-\epsilon_2, \alpha_2=2\epsilon_2$ and the fundamental weights are $\omega_1=\epsilon_1, \omega_2=\epsilon_1+\epsilon_2$. 
	Recall also that $G$ is generated by $X_{\pm\alpha_1}, X_{\pm\alpha_2}$ and $T$ (see \cite[Theorem 27.3]{ham}, or \cite[Theorem 21.62]{milne}).
	
	Let $L(\lambda)$ denote an irreducible $G$-module of the highest weight $\lambda$. The weight $\lambda$ is dominant, i.e. $\lambda=c_1\omega_1+c_2\omega_2, \text{where } c_1, c_2\geq 0$ (cf. \cite[Section 31]{ham}). Equivalently, a weight $\lambda=l_1\epsilon_1+l_2\epsilon_2$ is domoninant if and only if $l_1\geq l_2\geq 0$.
	
	The standard $4$-dimensional representation $V$ of $G$ is the irreducible $G$-module $L(\omega_1)$ of the highest weight $\omega_1$. Its standard basic vectors $v_1, v_2, v_3, v_4$ have weights $\epsilon_1, \epsilon_2, -\epsilon_1, -\epsilon_2$ respectively. In what follows we denote them by
	$v_{\epsilon_1}, v_{\epsilon_2}, v_{-\epsilon_1}, v_{-\epsilon_2}$ correspondingly. Then the action of $G$ on $V$ is uniquely defined by
	\[X_{\alpha_1}(t)(v_{-\epsilon_1})=v_{-\epsilon_1}-tv_{-\epsilon_2}, \ X_{\alpha_2}(t)(v_{-\epsilon_2})=v_{-\epsilon_2}+tv_{\epsilon_2}, \ X_{\alpha_1}(t)(v_{\epsilon_2})=v_{\epsilon_2}+tv_{\epsilon_1},\]
	\[X_{-\alpha_1}(t)(v_{\epsilon_1})=v_{\epsilon_1}+tv_{\epsilon_2}, \ X_{-\alpha_2}(t)(v_{\epsilon_2})=v_{\epsilon_2}+tv_{-\epsilon_2}, \ X_{-\alpha_1}(t)(v_{-\epsilon_2})=v_{-\epsilon_2}-tv_{-\epsilon_1},\]
	and by the fact that for any other couple of a simple root $\beta$ and a basic vector $v_{\gamma}$, we have $X_{\beta}(v_{\gamma})=v_{\gamma}$. 
	
	These formulas are particular case of the famous Matsumoto's theorem (cf. \cite{mats}). More precisely, if $L(\pi)$ is so-called \emph{basic} (irreducible) representation in the sense of  \cite{plotsemvav}, then there is a weight basis $\{v_{\lambda}\}$ of $L(\pi)$, such that the action of a root subgroup $X_{\alpha}$  is given as : 
	\begin{enumerate}
		\item if $\lambda\neq 0$ and $L(\pi)_{\lambda+\alpha}=0$, then $X_{\alpha}(t)(v_{\lambda})=v_{\lambda}$;
		\item if $\lambda, \lambda+\alpha\neq 0$ and $L(\pi)_{\lambda+\alpha}\neq 0$, then $X_{\alpha}(t)(v_{\lambda})=v_{\lambda}\pm  tv_{\lambda+\alpha}$;
		\item if $L(\pi)_{\alpha}=0$, then $X_{\alpha}(t)(v_0)=v_0$;
		\item if $L(\pi)_{\alpha}\neq 0$, then $X_{\alpha}(t)(v_0)=v_0\pm t\alpha_*(v_0)v_{\alpha}$ and $X_{\alpha}(t)(v_{-\alpha})=v_{-\alpha}\pm tv_0(\alpha)\pm t^2v_{\alpha}$, where
		$\alpha_*\in L(\pi)^*, v_0(\alpha)\in L(\pi)$.
	\end{enumerate}
	Besides, by the above remark, one needs to consider the action of $X_{\pm\alpha}$ only, where  $\alpha$ runs over simple roots.
	
	The weights $\omega_1$ and $\omega_2$ are the highest weights of basic (even \emph{minimal}, see \cite[Table 2]{plotsemvav}) representations. The irreducible $G$-module
	$L(\omega_2)$ seats in the following exact sequence 
	\[0\to \Bbbk\to \Lambda^2(V)\to L(\omega_2)\to 0,\]
	where the leftmost trivial module is spanned by the vector $v_{\epsilon_1}\wedge v_{-\epsilon_1}+v_{\epsilon_2}\wedge v_{-\epsilon_2}$. The module $L(\omega_2)$ is \emph{multiplicity-free} and its weight decomposition is
	\[L(\omega_2)_{\omega_2}=\oplus_{\lambda\in\{\pm(\epsilon_1+\epsilon_2), 0, \pm(\epsilon_1-\epsilon_2)\}} \Bbbk w_{\lambda},\]
	where
	\[w_{\epsilon_1+\epsilon_2}=v_{\epsilon_1}\wedge v_{\epsilon_2}, \ w_{\epsilon_1-\epsilon_2}=v_{\epsilon_1}\wedge v_{-\epsilon_2}, \ w_0=v_{\epsilon_1}\wedge v_{-\epsilon_1},\]
	\[w_{-\epsilon_1+\epsilon_2}=v_{-\epsilon_1}\wedge v_{\epsilon_2}, \ w_{-\epsilon_1-\epsilon_2}=v_{-\epsilon_1}\wedge v_{-\epsilon_2}.\]  
	Following the weight diagram of $L(\omega_2)$ (see \cite[Fig.15]{plotsemvav}), the action of $G$ on $L(\omega_2)$ can be described by the following formulas:
	\[
	\begin{array}{lcl}X_{\alpha_2}(t)(w_{-\epsilon_1-\epsilon_2})& = & w_{-\epsilon_1-\epsilon_2}+tw_{-\epsilon_1+\epsilon_2},\\[1mm]
		X_{\alpha_1}(t)(w_{-\epsilon_1+\epsilon_2})& = &w_{-\epsilon_1+\epsilon_2}-2tw_0+t^2 w_{\epsilon_1-\epsilon_2},\\[1mm]
		X_{\alpha_1}(t)(w_0) & = & w_0-tw_{\epsilon_1-\epsilon_2},\\[1mm]
		X_{\alpha_2}(t)(w_{\epsilon_1-\epsilon_2}) & = &w_{\epsilon_1-\epsilon_2}+tw_{\epsilon_1+\epsilon_2},\\[1mm]
		X_{-\alpha_2}(t)(w_{\epsilon_1+\epsilon_2})&=&w_{\epsilon_1+\epsilon_2}+tw_{\epsilon_1-\epsilon_2},\\[1mm]
		X_{-\alpha_1}(t)(w_{\epsilon_1-\epsilon_2}) & = & w_{\epsilon_1-\epsilon_2}-2tw_0+t^2 w_{-\epsilon_1+\epsilon_2},\\[1mm]
		X_{-\alpha_1}(t)(w_0)&=&w_0-tw_{-\epsilon_1+\epsilon_2},\\[1mm]
		X_{-\alpha_2}(t)(w_{-\epsilon_1+\epsilon_2})&=&w_{-\epsilon_1+\epsilon_2}+tw_{-\epsilon_1-\epsilon_2}.
	\end{array}\]
	We will denote $L(\omega_1)\otimes L(\omega_2)$ by $N$ for convenience. Then, $N_{\omega_1+\omega_2}=\Bbbk (v_{\epsilon_1}\otimes w_{\epsilon_1+\epsilon_2})$ and the vector $v_{\epsilon_1}\otimes w_{\epsilon_1+\epsilon_2}$ generates the submodule $M$ of $N$, such that its top is isomorphic to
	$L(\omega_1+\omega_2)=L(2\epsilon_1+\epsilon_2).$ Note also that the weights of $N$ are $\pm 2\epsilon_1\pm\epsilon_2, \pm \epsilon_1\pm 2\epsilon_2$ and $\pm\epsilon_1, \pm\epsilon_2$, obtained by adding the weights of $L(\omega_1)$ to the weights of $L(\omega_2)$.

	Moreover, the first eight weights have multiplicity one, but the rest ones have multiplicity three.
	\begin{lm}\label{a required simple}
		If $\mathrm{char}\Bbbk =5$, then $\mathrm{soc}(N)=\mathrm{soc}(M)=\mathrm{rad}(M)\simeq L(\omega_1)$ and $M/\mathrm{rad}(M)\simeq L(\omega_1+\omega_2)$, but
		$N/M\simeq L(\omega_1)$.  Moreover, $\mathrm{soc}(M)$ has a (weight) basis
		\[v_{\epsilon_1}\otimes w_0 +v_{\epsilon_2}\otimes w_{\epsilon_1-\epsilon_2}-v_{-\epsilon_2}\otimes w_{\epsilon_1+\epsilon_2},\]
		\[-v_{\epsilon_2}\otimes w_0 -v_{\epsilon_1}\otimes w_{-\epsilon_1+\epsilon_2}+v_{-\epsilon_1}\otimes w_{\epsilon_1+\epsilon_2},\]
		\[-v_{-\epsilon_2}\otimes w_0 -v_{\epsilon_1}\otimes w_{-\epsilon_1-\epsilon_2}+v_{-\epsilon_1}\otimes w_{\epsilon_1-\epsilon_2},\]
		\[v_{-\epsilon_1}\otimes w_0 +v_{\epsilon_2}\otimes w_{-\epsilon_1-\epsilon_2}-v_{-\epsilon_2}\otimes w_{-\epsilon_1+\epsilon_2}.\]
		Similarly, $M/\mathrm{soc}(M)\simeq L(\omega_1+\omega_2)$ has a basis consisting of the vectors
		\[u_{2\epsilon_1+\epsilon_2}=v_{\epsilon_1}\otimes w_{\epsilon_1+\epsilon_2},\, u_{2\epsilon_1-\epsilon_2}=v_{\epsilon_1}\otimes w_{\epsilon_1-\epsilon_2},\]
		\[u_{\epsilon_1+2\epsilon_2}=v_{\epsilon_2}\otimes w_{\epsilon_1+\epsilon_2}, \, u_{\epsilon_1-2\epsilon_2}=v_{-\epsilon_2}\otimes w_{\epsilon_1-\epsilon_2},\]
		\[u_{-2\epsilon_1+\epsilon_2}=v_{-\epsilon_1}\otimes w_{-\epsilon_1+\epsilon_2}, \, u_{-2\epsilon_1-\epsilon_2}=v_{-\epsilon_1}\otimes w_{-\epsilon_1-\epsilon_2}\]
		\[u_{-\epsilon_1-2\epsilon_2}=v_{-\epsilon_2}\otimes w_{-\epsilon_1-\epsilon_2},\, u_{-\epsilon_1+2\epsilon_2}=v_{\epsilon_2}\otimes w_{-\epsilon_1+\epsilon_2}\]
		of weights  $\pm 2\epsilon_1\pm\epsilon_2, \pm \epsilon_1\pm 2\epsilon_2$, and the vectors
		\[u_{\epsilon_1}=v_{\epsilon_2}\otimes w_{\epsilon_1-\epsilon_2}-2v_{\epsilon_1}\otimes w_0,\]
		\[u_{\epsilon_2}=v_{\epsilon_1}\otimes w_{-\epsilon_1+\epsilon_2}-2v_{\epsilon_2}\otimes w_0,\]
		\[u_{-\epsilon_2}=v_{\epsilon_1}\otimes w_{-\epsilon_1-\epsilon_2}-2v_{-\epsilon_2}\otimes w_0,\]
		\[u_{-\epsilon_1}=v_{\epsilon_2}\otimes w_{-\epsilon_1-\epsilon_2}-2v_{-\epsilon_1}\otimes w_0 \]
		of weights $\{\pm\epsilon_1, \pm\epsilon_2\}$.
	\end{lm}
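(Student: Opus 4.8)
The plan is to realize $N=L(\omega_1)\otimes L(\omega_2)$ as a tilting module for $\mathrm{Sp}_4$ and to read off its Loewy structure from a good/Weyl filtration together with one $\mathrm{Hom}$-dimension computation, reserving the explicit basis bookkeeping for the end. First I would record the elementary module theory of the two tensor factors in characteristic $5$. Since there is no dominant weight strictly below $\omega_1$, the standard module $V=L(\omega_1)=V(\omega_1)=H^0(\omega_1)$ is simultaneously a Weyl and an induced module; and for $C_2$ in odd characteristic $V(\omega_2)$ is irreducible, so $L(\omega_2)=V(\omega_2)=H^0(\omega_2)$ as well (this is already implicit in the displayed sequence $0\to\Bbbk\to\Lambda^2 V\to L(\omega_2)\to 0$). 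Hence, by the Donkin--Mathieu theorem (\cite[Proposition II.4.21]{jan} and its dual), $N$ has both a good filtration and a Weyl filtration. Comparing characters with the classical decomposition $V_{\omega_1}\otimes V_{\omega_2}=V_{\omega_1+\omega_2}\oplus V_{\omega_1}$ over $\mathbb{C}$ (formal characters being independent of the characteristic), the good-filtration factors of $N$ are $H^0(\omega_1+\omega_2)$ and $H^0(\omega_1)$, each once, and the Weyl factors are $V(\omega_1+\omega_2)$ and $V(\omega_1)$; in particular $\dim N=16+4=20$. Using $V(\omega_1)=L(\omega_1)$ together with \cite[Proposition II.4.16(a)]{jan}, I obtain $\dim\mathrm{Hom}_G(L(\omega_1),N)=\dim\mathrm{Hom}_G(V(\omega_1),N)=(N:H^0(\omega_1))=1$.

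Next I would identify the submodule $M$. Its generator $v_{\epsilon_1}\otimes w_{\epsilon_1+\epsilon_2}$ has highest weight $\omega_1+\omega_2$, which occurs in $N$ with multiplicity one, so $M$ is a quotient of the Weyl module $V(\omega_1+\omega_2)$. Applying the lowering root elements $x_{-\alpha}$ to this vector (the action on $N$ being computed from the explicit formulas for $V$ and $L(\omega_2)$ via the Leibniz rule, using that $G$ is generated by the root subgroups $X_{\pm\alpha_1},X_{\pm\alpha_2}$ and $T$) produces exactly the sixteen weight vectors listed in the statement, whence $\dim M\ge 16=\dim V(\omega_1+\omega_2)$ and therefore $M\cong V(\omega_1+\omega_2)$. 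Since $\omega_1$ is the only dominant weight strictly below $\omega_1+\omega_2$ for $C_2$ and $\mathrm{Ext}^1_G(L(\omega_1),L(\omega_1))=0$ (cf.\ \cite[II.2.12]{jan}), the radical of $V(\omega_1+\omega_2)$ is a direct sum of copies of $L(\omega_1)$; a direct check on the listed basis shows that the span of the four ``socle vectors'' is a $G$-submodule isomorphic to $L(\omega_1)$ and that the resulting $12$-dimensional quotient is irreducible of highest weight $\omega_1+\omega_2$, with the stated basis $\{u_\ast\}$. Consequently $\mathrm{soc}(M)=\mathrm{rad}(M)\cong L(\omega_1)$, $M/\mathrm{rad}(M)\cong L(\omega_1+\omega_2)$, and, as a by-product, $\dim L(\omega_1+\omega_2)=12$.

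Finally, $N/M$ has dimension $20-16=4$ and character $\mathrm{ch}(N)-\mathrm{ch}(V(\omega_1+\omega_2))=\chi(\omega_1)$, the Weyl character of $L(\omega_1)$; a $4$-dimensional module with this character must be $L(\omega_1)$, so $N/M\cong L(\omega_1)$. Since $\mathrm{soc}(M)\cong L(\omega_1)$ is a semisimple submodule of $N$ and $\dim\mathrm{Hom}_G(L(\omega_1),N)=1$ by the first step, the inclusion $\mathrm{soc}(M)\subseteq\mathrm{soc}(N)$ is an equality $\mathrm{soc}(N)=\mathrm{soc}(M)$; equivalently the sequence $0\to M\to N\to L(\omega_1)\to 0$ is non-split and $N$ is uniserial with layers $L(\omega_1)$, $L(\omega_1+\omega_2)$, $L(\omega_1)$ from socle to head.

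I expect the main obstacle to be the explicit root-operator computation: producing the sixteen displayed weight vectors and verifying that the $12$-dimensional section $M/\mathrm{soc}(M)$ is irreducible. This is routine but lengthy, and it is the only place where the hypothesis $\mathrm{char}\,\Bbbk=5$ is genuinely used; for generic $p$ one would instead find $\dim M=12$, i.e.\ $M=L(\omega_1+\omega_2)$. If one wishes to sidestep that computation, the reducibility of $V(\omega_1+\omega_2)$ in characteristic $5$ — which is exactly what forces $\dim M=16$ rather than $12$ — can be deduced from the linkage principle (the affine wall $\langle\,\cdot\,,(\epsilon_1+\epsilon_2)^\vee\rangle=5$ links $\omega_1+\omega_2$ to $\omega_1$) together with the Jantzen sum formula, or simply cited from the known decomposition data for type $C_2$.
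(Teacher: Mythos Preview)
Your argument is correct and runs parallel to the paper's, sharing the same essential ingredients: Donkin--Mathieu for the good filtration of $N$, the Jantzen sum formula (or linkage) for the reducibility of $V(\omega_1+\omega_2)$ at $p=5$, and an explicit root-operator computation to exhibit the sixteen weight vectors in $M$. Where you diverge is in the packaging. The paper first locates the copy $S\simeq L(\omega_1)$ inside $N$ by solving for a highest-weight vector of weight $\epsilon_1$, identifies $S$ as the kernel of the good-filtration quotient $N\to H^0(\omega_1+\omega_2)$, and then proves $S\subseteq M$ by an explicit contradiction showing $\dim M_{\epsilon_1}\ge 2$. You instead observe up front that $N$ is tilting (both tensor factors being simultaneously Weyl and induced), read off $\dim\mathrm{Hom}_G(V(\omega_1),N)=1$ from \cite[II.4.16]{jan}, and use this single Hom bound to pin down $\mathrm{soc}(N)$ once $M\cong V(\omega_1+\omega_2)$ is established. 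This is a clean shortcut; just note that to conclude $\mathrm{soc}(N)=\mathrm{soc}(M)$ from $\dim\mathrm{Hom}_G(L(\omega_1),N)=1$ you also need $\mathrm{Hom}_G(L(\omega_1+\omega_2),N)=0$, which follows immediately since the unique (up to scalar) map $V(\omega_1+\omega_2)\to N$ has $16$-dimensional image $M$ and hence cannot factor through the $12$-dimensional head. The explicit basis check you defer is exactly the computation the paper carries out, so nothing is lost.
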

	\begin{proof}
		Among the weights of $N$ only $2\epsilon_1+\epsilon_2$ and $\epsilon_1$ are dominant, hence the possible composition factors 
		of $N$ are only $L(2\epsilon_1+\epsilon_2)$ and $L(\epsilon_1)$, and $[N : L(\omega_1+\omega_2)]=[M : L(\omega_1+\omega_2)]=1$. Observe also that the weights $\pm 2\epsilon_1\pm\epsilon_2, \pm \epsilon_1\pm 2\epsilon_2$ can appear in $L(\omega_1+\omega_2)$ only, hence $\oplus_{\alpha\in\{\pm 2\epsilon_1\pm\epsilon_2, \pm \epsilon_1\pm 2\epsilon_2\}} N_{\alpha}\subseteq M$.  
		Consider a vector 
		\[x=\alpha v_{\epsilon_1}\otimes w_0 +\beta v_{\epsilon_2}\otimes w_{\epsilon_1-\epsilon_2} +\gamma v_{-\epsilon_2}\otimes w_{\epsilon_1+\epsilon_2}\]
		of weight $\epsilon_1$. Then
		\[X_{\alpha_1}(t)(x)=x+t(\beta-\alpha)v_{\epsilon_1}\otimes w_{\epsilon_1-\epsilon_2},\]
		\[X_{\alpha_2}(t)(x)=x+t(\beta+\gamma)v_{\epsilon_2}\otimes w_{\epsilon_1+\epsilon_2}.\]
		Thus the vector 
		\[y_{\epsilon_1}=v_{\epsilon_1}\otimes w_0 +v_{\epsilon_2}\otimes w_{\epsilon_1-\epsilon_2}-v_{-\epsilon_2}\otimes w_{\epsilon_1+\epsilon_2}\]
		is the highest weight vector of an irreducible submodule $S$, that is isomorphic to $L(\omega_1)$. 
		
		Further, we have
		\[X_{-\alpha_1}(t)(y_{\epsilon_1})=y_{\epsilon_1}+ty_{\epsilon_2}, \ X_{-\alpha_2}(t)(y_{\epsilon_2})=y_{\epsilon_2}+ty_{-\epsilon_2}, \ X_{-\alpha_1}(t)(y_{-\epsilon_2})=y_{-\epsilon_2}-t y_{-\epsilon_1},\]
		where
		\[y_{\epsilon_2}=-v_{\epsilon_2}\otimes w_0 -v_{\epsilon_1}\otimes w_{-\epsilon_1+\epsilon_2}+v_{-\epsilon_1}\otimes w_{\epsilon_1+\epsilon_2},\]
		\[y_{-\epsilon_2}=-v_{-\epsilon_2}\otimes w_0 -v_{\epsilon_1}\otimes w_{-\epsilon_1-\epsilon_2}+v_{-\epsilon_1}\otimes w_{\epsilon_1-\epsilon_2},\]
		\[y_{-\epsilon_1}=v_{-\epsilon_1}\otimes w_0 +v_{\epsilon_2}\otimes w_{-\epsilon_1-\epsilon_2}-v_{-\epsilon_2}\otimes w_{-\epsilon_1+\epsilon_2}.\]
		Recall that $L(\omega_1)=H^0(\omega_1)$ and $L(\omega_2)=H^0(\omega_2)$ (see \cite[3.2.3]{hag-mc}).  By Donkin-Mathieu theorem \cite[Proposition II.4.21]{jan} and \cite[2.2.4]{hag-mc}, $N$ has a good filtration  
		with the top quotient $H^0(\omega_1+\omega_2)$. Moreover, the submodule $\ker(N\to H^0(\omega_1+\omega_2))$ has a good filtration with layers isomorphic to $H^0(\omega_1)$ only, which is nothing else but $S$ (use \cite[II.2.12(1)]{jan}). 
		Besides, $H^0(\omega_1+\omega_2)$ is either irreducible or $H^0(\omega_1+\omega_2)/L(\omega_1+\omega_2)\simeq L(\omega_1)$. 
		
		Recall that for any dominant weight $\lambda$, the induced module $H^0(\lambda)$ is irreducible if and only if the Weyl module $V(\lambda)\simeq H^0(-w_0(\lambda))^*$ is. In our case $w_0=-1$, hence $V(\lambda)\simeq H^0(\lambda)^*$. If $\mathrm{char}\Bbbk =5$, then applying \cite[Proposition II.8.19]{jan} to $V(\omega_1+\omega_2)$, one easily sees that the sum $(1)$ therein contains the unique term $\chi(\omega_1)=\mathrm{ch}(H^0(\omega_1))=\mathrm{ch}(L(\omega_1))$, hence $V(\omega_1+\omega_2)$ is not irreducible.

		If $M\cap S=0$, then $\dim M_{\pm\epsilon_i}=1, i=1, 2$. Computing $X_{\alpha_1}(t)(v_{\epsilon_2}\otimes w_{-\epsilon_1+\epsilon_2})$, we obtain
		\[v_{\epsilon_1}\otimes w_{-\epsilon_1+\epsilon_2}-2v_{\epsilon_2}\otimes w_0\in M_{\epsilon_2}, \ v_{\epsilon_2}\otimes w_{\epsilon_1-\epsilon_2}-2v_{\epsilon_1}\otimes w_0\in M_{\epsilon_1}.\] 
		Next, computing $X_{-\alpha_2}(t)(v_{\epsilon_2}\otimes w_{\epsilon_1+\epsilon_2})$, we obtain
		\[v_{-\epsilon_2}\otimes w_{\epsilon_1+\epsilon_2}+v_{\epsilon_2}\otimes w_{\epsilon_1-\epsilon_2}\in M_{\epsilon_1},\]
		that is $\dim M_{\epsilon_1}\geq 2$. This contradiction implies $S\subseteq M$. 
		Furthermore, since $M$ is indecomposable, we have $\mathrm{rad}(M)=\mathrm{soc}(M)=\mathrm{soc}(N)=S$. 
		Finally, calculating $X_{-\alpha_2}(t)(v_{\epsilon_1}\otimes w_{-\epsilon_1+\epsilon_2}-2v_{\epsilon_2}\otimes w_0)$, and then 
		$X_{-\alpha_1}(t)(v_{\epsilon_1}\otimes w_{-\epsilon_1-\epsilon_2}-2v_{-\epsilon_2}\otimes w_0)$, we complete the proof. 
	\end{proof}
	The following formulas describe the action of $X_{\pm\alpha_1}$ and $X_{\pm\alpha_2}$ on $L(\omega_1+\omega_2)$ :
	\[X_{\alpha_1}(t)(u_{-2\epsilon_1-\epsilon_2})=u_{-2\epsilon_1-\epsilon_2}-tu_{-\epsilon_1-2\epsilon_2}, \]
	\[X_{\alpha_2}(t)(u_{-2\epsilon_1-\epsilon_2})=u_{-2\epsilon_1-\epsilon_2}+tu_{-2\epsilon_1+\epsilon_2},\]
	\[X_{\alpha_1}(t)(u_{-2\epsilon_1+\epsilon_2})=u_{-2\epsilon_1+\epsilon_2}-t u_{-\epsilon_1}+t^2 u_{-\epsilon_2}-t^3 u_{\epsilon_1-2\epsilon_2},\]
	\[X_{\alpha_2}(t)(u_{-\epsilon_1-2\epsilon_2})=u_{-\epsilon_1-2\epsilon_2}+2t u_{-\epsilon_1}+t^2 u_{-\epsilon_1+2\epsilon_2},\]
	\[X_{\alpha_1}(t)(u_{-\epsilon_1})=u_{-\epsilon_1}-t u_{-\epsilon_2},\]
	\[X_{\alpha_2}(t)(u_{-\epsilon_1})=u_{-\epsilon_1}+t u_{-\epsilon_1+2\epsilon_2},\]
	\[X_{\alpha_1}(t)(u_{-\epsilon_2})=u_{-\epsilon_2}+2t u_{\epsilon_1-2\epsilon_2},\]
	\[X_{\alpha_2}(t)(u_{-\epsilon_2})=u_{-\epsilon_2}+t u_{\epsilon_2},\]
	\[X_{\alpha_1}(t)(u_{-\epsilon_1+2\epsilon_2})=u_{-\epsilon_1+2\epsilon_2}+t u_{\epsilon_2}+t^2 u_{\epsilon_1}+t^3 u_{2\epsilon_1-\epsilon_2},\]
	\[X_{\alpha_1}(t)(u_{\epsilon_2})=u_{\epsilon_2}+2t u_{\epsilon_1}-2t^2 u_{2\epsilon_1-\epsilon_2},\]
	\[X_{\alpha_2}(t)(u_{\epsilon_1-2\epsilon_2})=u_{\epsilon_1-2\epsilon_2}+2t u_{\epsilon_1}+t^2 u_{\epsilon_1+2\epsilon_2},\]
	\[X_{\alpha_1}(t)(u_{\epsilon_1})=u_{\epsilon_1}-2t u_{2\epsilon_1-\epsilon_2},\]
	\[X_{\alpha_2}(t)(u_{\epsilon_1})=u_{\epsilon_1}+t u_{\epsilon_1+2\epsilon_2},\]
	\[X_{\alpha_1}(t)(u_{\epsilon_1+2\epsilon_2})=u_{\epsilon_1+2\epsilon_2}+t u_{2\epsilon_1+\epsilon_2},\]
	\[X_{\alpha_2}(t)(u_{2\epsilon_1-\epsilon_2})=u_{2\epsilon_1-\epsilon_2}+t u_{2\epsilon_1+\epsilon_2};\]
	
	\[X_{-\alpha_1}(t)(u_{2\epsilon_1+\epsilon_2})=u_{2\epsilon_1+\epsilon_2}+t u_{\epsilon_1+2\epsilon_2},\]
	\[X_{-\alpha_2}(t)(u_{2\epsilon_1+\epsilon_2})=u_{2\epsilon_1+\epsilon_2}+t u_{2\epsilon_1-\epsilon_2},\]
	\[X_{-\alpha_1}(t)(u_{2\epsilon_1-\epsilon_2})=u_{2\epsilon_1-\epsilon_2}+tu_{\epsilon_1}+t^2 u_{\epsilon_2}+t^3 u_{-\epsilon_1+2\epsilon_2},\]
	\[X_{-\alpha_2}(t)(u_{\epsilon_1+2\epsilon_2})=u_{\epsilon_1+2\epsilon_2}+2t u_{\epsilon_1}+t^2 u_{\epsilon_1-2\epsilon_2},\]
	\[X_{-\alpha_1}(t)(u_{\epsilon_1})=u_{\epsilon_1}+2t u_{\epsilon_2}-2t^2 u_{-\epsilon_1+2\epsilon_2},\]
	\[X_{-\alpha_2}(t)(u_{\epsilon_1})=u_{\epsilon_1}+t u_{\epsilon_1-2\epsilon_2},\]
	\[X_{-\alpha_1}(t)(u_{\epsilon_1-2\epsilon_2})=u_{\epsilon_1-2\epsilon_2}-tu_{-\epsilon_2}+t^2 u_{-\epsilon_1}-t^3 u_{-2\epsilon_1+\epsilon_2},\]
	\[X_{-\alpha_1}(t)(u_{\epsilon_2})=u_{\epsilon_2}-2t u_{-\epsilon_1+2\epsilon_2},\]
	\[X_{-\alpha_2}(t)(u_{\epsilon_2})=u_{\epsilon_2}+t u_{-\epsilon_2},\]
	\[X_{-\alpha_1}(t)(u_{-\epsilon_2})=u_{-\epsilon_2}-2t u_{-\epsilon_1}-2t^2 u_{-2\epsilon_1+\epsilon_2},\]
	\[X_{-\alpha_2}(t)(u_{-\epsilon_1+2\epsilon_2})=u_{-\epsilon_1+2\epsilon_2}+2t u_{-\epsilon_1}+t^2 u_{-\epsilon_1-2\epsilon_2},\]
	\[X_{-\alpha_1}(t)(u_{-\epsilon_1})=u_{-\epsilon_1}+2t u_{-2\epsilon_1+\epsilon_2}, \]
	\[X_{-\alpha_2}(t)(u_{-\epsilon_1})=u_{-\epsilon_1}+t u_{-\epsilon_1-2\epsilon_2},\]
	\[X_{-\alpha_1}(t)(u_{-\epsilon_1-2\epsilon_2})=u_{-\epsilon_1-2\epsilon_2}-t u_{-2\epsilon_1-\epsilon_2},\]
	\[X_{-\alpha_2}(t)(u_{-2\epsilon_1+\epsilon_2})=u_{-2\epsilon_1+\epsilon_2}+t u_{-2\epsilon_1-\epsilon_2}.\]
	
	The Lie algebra $\mathfrak{g}$,  regarded as a $G$-module with respect to the adjoint action, is isomorphic to $L(2\omega_1)$. 
	
	To determine a Harish-Chandra pair structure on $(G, L(\omega_1+\omega_2))$, one needs to determine a certain $G$-module morphism
	$\mathrm{Sym}_2(L(\omega_1+\omega_2))\to L(2\omega_1)$. 
	\begin{lm}\label{the unique bilinear}
		We have 
		\[\dim\mathrm{Hom}_G(\mathrm{Sym}_2(L(\omega_1+\omega_2)), L(2\omega_1))=\dim\mathrm{Hom}_{\mathfrak{g}}(\mathrm{Sym}_2(L(\omega_1+\omega_2)), L(2\omega_1))=1.\]	
		In particular, there holds
		\[\mathrm{Hom}_G(\mathrm{Sym}_2(L(\omega_1+\omega_2)), L(2\omega_1))=\mathrm{Hom}_{\mathfrak{g}}(\mathrm{Sym}_2(L(\omega_1+\omega_2)), L(2\omega_1)).\]	
	\end{lm}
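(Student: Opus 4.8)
The plan is to reduce the assertion, by duality, to the computation of a space of highest-weight vectors in $\mathrm{Sym}_2(L(\omega_1+\omega_2))$, to carry out that finite computation using the explicit action formulas recorded above, and finally to pass from $\mathfrak{g}$-equivariance to $G$-equivariance by a character argument. Write $L=L(\omega_1+\omega_2)$ and $\mathfrak{n}^+=\bigoplus_{\beta\in\Delta^+}\Bbbk x_\beta$. Since $w_0=-1$ for $\mathrm{Sp}_4$, the modules $L$ and $\mathfrak{g}\simeq L(2\omega_1)$ are self-dual, and (as $\mathrm{char}\,\Bbbk\neq 2$, exactly as in the first paragraph of the proof of Proposition \ref{the unique form}) $\mathrm{Sym}_2(L)^*\simeq\mathrm{Sym}_2(L^*)\simeq\mathrm{Sym}_2(L)$. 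Hence for $\mathsf{?}\in\{G,\mathfrak{g}\}$ there is $\mathrm{Hom}_{\mathsf{?}}(\mathrm{Sym}_2(L),L(2\omega_1))\simeq\mathrm{Hom}_{\mathsf{?}}(L(2\omega_1),\mathrm{Sym}_2(L))$. Because $L(2\omega_1)$ is irreducible and is generated over $U(\mathfrak{g})$ by a highest-weight vector of weight $2\epsilon_1$, sending a homomorphism to the image of that vector gives an injection $\mathrm{Hom}_{\mathfrak{g}}(L(2\omega_1),\mathrm{Sym}_2(L))\hookrightarrow\bigl(\mathrm{Sym}_2(L)_{2\epsilon_1}\bigr)^{\mathfrak{n}^+}$, so it suffices to show the latter space is one-dimensional.

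Next I would do the computation. By the weight basis of $L$ in Lemma \ref{a required simple}, the space $\mathrm{Sym}_2(L)_{2\epsilon_1}$ is four-dimensional, with basis $u_{2\epsilon_1+\epsilon_2}u_{-\epsilon_2}$, $u_{2\epsilon_1-\epsilon_2}u_{\epsilon_2}$, $u_{\epsilon_1+2\epsilon_2}u_{\epsilon_1-2\epsilon_2}$, $u_{\epsilon_1}^2$. As $\mathfrak{n}^+$ is generated as a Lie algebra by $x_{\alpha_1}$ and $x_{\alpha_2}$, a vector of $\mathrm{Sym}_2(L)$ is $\mathfrak{n}^+$-invariant iff it is killed by both $x_{\alpha_1}$ and $x_{\alpha_2}$; the actions of $x_{\alpha_1}$ and $x_{\alpha_2}$ on the four basis monomials are read off as the linear-in-$t$ terms of the listed formulas for $X_{\pm\alpha_i}(t)$ on $L(\omega_1+\omega_2)$, extended to $\mathrm{Sym}_2(L)$ by the Leibniz rule. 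Writing out the two resulting homogeneous linear systems, one checks that their common solution space is spanned by a single nonzero vector $z$. This linear-algebra verification — keeping careful track of the signs and integer coefficients produced by the divided-power formulas — is the one genuinely laborious step, and it is the main obstacle; everything else is formal. Since $\mathrm{char}\,\Bbbk=5$ is large enough that the Weyl module $V(2\omega_1)$ is irreducible (indeed $V(2\omega_1)=L(2\omega_1)=\mathfrak{g}$), the finite-dimensional submodule $U(\mathfrak{g})z\subseteq\mathrm{Sym}_2(L)$ generated by the $\mathfrak{n}^+$-invariant weight vector $z$ is a nonzero quotient of $V(2\omega_1)$, hence is isomorphic to $L(2\omega_1)$. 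Thus $\dim\mathrm{Hom}_{\mathfrak{g}}(\mathrm{Sym}_2(L),L(2\omega_1))=1$ (upper bound from the previous paragraph, lower bound from $z$); this is also consistent with, and reproves, the existence of the $\mathfrak{sp}_4$-invariant bracket of $\mathfrak{brj}(2;5)$ on $\mathfrak{g}\oplus L$ from \cite{BGL3}.

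Finally, to identify the two $\mathrm{Hom}$-spaces: $G$ acts on the one-dimensional space $\mathrm{Hom}_{\mathfrak{g}}(\mathrm{Sym}_2(L),L(2\omega_1))$ by $(g\cdot\phi)(x)=g\,\phi(g^{-1}x)$, which again lies in $\mathrm{Hom}_{\mathfrak{g}}$ because $\mathrm{Ad}(g)$ is an automorphism of $\mathfrak{g}$. This is a rational one-dimensional representation of $G=\mathrm{Sp}_4$, i.e. a character, and $\mathrm{Sp}_4$, being semisimple, has no nontrivial characters, so the action is trivial; that is, every $\mathfrak{g}$-equivariant map in sight is automatically $G$-equivariant. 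Together with the obvious inclusion $\mathrm{Hom}_G\subseteq\mathrm{Hom}_{\mathfrak{g}}$ this yields $\mathrm{Hom}_G(\mathrm{Sym}_2(L(\omega_1+\omega_2)),L(2\omega_1))=\mathrm{Hom}_{\mathfrak{g}}(\mathrm{Sym}_2(L(\omega_1+\omega_2)),L(2\omega_1))$, of dimension $1$, which is the claim.
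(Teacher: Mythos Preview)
Your overall strategy coincides with the paper's: dualize to $\mathrm{Hom}(L(2\omega_1),\mathrm{Sym}_2(L))$, locate the four-dimensional weight space $\mathrm{Sym}_2(L)_{2\epsilon_1}$, and cut it down by the two simple-root conditions to a line. The paper actually carries out that linear system (obtaining $\alpha+2\beta=\gamma-2\delta=0$ from $x_{\alpha_1}$ and $\alpha+\delta=\beta+\gamma=0$ from $x_{\alpha_2}$), whereas you only say ``one checks''; but that is a matter of completeness, not correctness. Your character argument at the end---that $G=\mathrm{Sp}_4$ has no nontrivial characters, so a one-dimensional $G$-stable space of $\mathfrak{g}$-maps is automatically $G$-fixed---is a pleasant alternative to the paper's more hands-on route.

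There is, however, a genuine gap in your lower-bound step. You assert that the cyclic $\mathfrak{g}$-module $U(\mathfrak{g})z$ is a quotient of $V(2\omega_1)$. In positive characteristic this does not follow from $z$ being merely $\mathfrak{n}^+$-invariant: the universal property of the Weyl module is for $G$-modules generated by a $B$-highest-weight (equivalently $U^+$-invariant) vector, and $\mathfrak{n}^+$-invariance is strictly weaker than $U^+$-invariance when $\mathrm{char}\,\Bbbk>0$. A $\mathfrak{g}$-highest-weight vector in a $G$-module need not generate a quotient of $V(\lambda)$. The paper closes exactly this gap by computing the \emph{full} action $X_{\alpha_i}(t)(f)$, not just its linear-in-$t$ part, and observing that in this particular weight space $x_{\alpha_i}$-stability and $X_{\alpha_i}$-stability happen to coincide; that upgrades $z$ to a genuine $B$-highest-weight vector and legitimizes the Weyl-module argument. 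You should either perform that extra check, or else take the lower bound from \cite{BGL3} (the existence of the $\mathfrak{sp}_4$-equivariant bracket on $\mathfrak{brj}(2;5)$) rather than claiming to reprove it. Either patch is short; without one, the implication ``$z$ exists $\Rightarrow$ $\dim\mathrm{Hom}_{\mathfrak{g}}\geq 1$'' is unjustified.
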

	\begin{proof}
		Arguing as in Proposition \ref{the unique form}, we have isomorphisms \[\mathrm{Sym}_2(L(\omega_1+\omega_2))^*\simeq \mathrm{Sym}_2(L(\omega_1+\omega_2)^*)\simeq \mathrm{Sym}_2(L(\omega_1+\omega_2))\]
		of $G$ and $\mathfrak{g}$-modules, simultaneously.   
		Thus 
		\[\mathrm{Hom}_G(\mathrm{Sym}_2(L(\omega_1+\omega_2)), L(2\omega_1))\simeq \mathrm{Hom}_G(L(2\omega_1), \mathrm{Sym}_2(L(\omega_1+\omega_2)))\]
		and
		\[\mathrm{Hom}_{\mathfrak{g}}(\mathrm{Sym}_2(L(\omega_1+\omega_2)), L(2\omega_1))\simeq \mathrm{Hom}_{\mathfrak{g}}(L(2\omega_1), \mathrm{Sym}_2(L(\omega_1+\omega_2))).\]
		Note that
		\[\mathrm{Sym}_2(L(\omega_1+\omega_2))_{2\omega_1}=\Bbbk u_{\epsilon_1+2\epsilon_2}u_{\epsilon_1-2\epsilon_2}+\Bbbk u_{2\epsilon_1+\epsilon_2}u_{-\epsilon_2}+\Bbbk u_{2\epsilon_1-\epsilon_2}u_{\epsilon_2}+\Bbbk u_{\epsilon_1}^2\]
		and any (nontrivial) $G$-module morphism from $L(2\omega_1)$ to $\mathrm{Sym}_2(L(\omega_1+\omega_2))$ is uniquely determined by a $X_{\alpha}$-stable element from
		$\mathrm{Sym}_2(L(\omega_1+\omega_2))_{2\omega_1}$,  where $\alpha\in\{\alpha_1, \alpha_2 \}$. Indeed, any such element uniquely determines a $G$-module morphism $V(2\omega_1)\to \mathrm{Sym}_2(L(\omega_1+\omega_2))$.  
		It remains to note that $\mathrm{rad}(V(2\omega_1))$ is either trivial or isomorphic to a direct sum of several copies of $L(\omega_1)$, but $\mathrm{Sym}_2(L(\omega_1+\omega_2))$ does not have
		such composition factors. Let $W$ denote the subspace of $\mathrm{Sym}_2(L(\omega_1+\omega_2))_{2\omega_1}$ consisting of elements annihilated by $x_{\alpha}$, where $\alpha\in\{\alpha_1, \alpha_2\}$. 
		It is obvious that
		\[\dim\mathrm{Hom}_{\mathfrak{g}}(L(2\omega_1), \mathrm{Sym}_2(L(\omega_1+\omega_2)))\leq\dim W. \]
		Let 
		\[f=\alpha u_{\epsilon_1+2\epsilon_2}u_{\epsilon_1-2\epsilon_2}+\beta u_{2\epsilon_1+2\epsilon_2}u_{-\epsilon_2}+\gamma u_{2\epsilon_1-\epsilon_2}u_{\epsilon_2}+\delta u_{\epsilon_1}^2.\]	
		Then 
		\[X_{\alpha_1}(t(f))=f +t[(\alpha+2\beta) u_{2\epsilon_1+\epsilon_2}u_{\epsilon_1-2\epsilon_2}+(2\gamma -4\delta)u_{2\epsilon_1-\epsilon_2}u_{\epsilon_1} ]+t^2(-2\gamma+4\delta)u_{2\epsilon_1-\epsilon_2}^2 .\]
		By the above remark, the second term is equal to $t[x_{\alpha_1}, f]$, hence $f$ is $X_{\alpha_1}$-stable if and only if it is $x_{\alpha_1}$-stable if and only if
		\[\alpha+2\beta=\gamma -2\delta=0.\]
		Similarly, we have
		\[X_{\alpha_2}(t(f))=f+t[2(\alpha+\delta)u_{\epsilon_1+2\epsilon_2}u_{\epsilon_1}+(\beta+\gamma)u_{2\epsilon_1+\epsilon_2}u_{\epsilon_2}]+t^2(\alpha+\delta)u_{\epsilon_1+2\epsilon_2}^2,\]
		that is $f$ is $X_{\alpha_2}$-stable if and only if it is $x_{\alpha_2}$-stable if and only if
		\[\alpha+\delta=\beta+\gamma=0. \]
		We obtain $\dim\mathrm{Hom}_G(\mathrm{Sym}_2(L(\omega_1+\omega_2)), L(2\omega_1))=\dim W=1$, that concludes the proof.
	\end{proof}
	\begin{tr}\label{it is HC}
		The pair $(\mathrm{Sp}_4, L(\omega_1+\omega_2))$ is a Harish-Chandra pair with respect to the symmetric bilinear $G$-equivariant map $L(\omega_1+\omega_2)\times L(\omega_1+\omega_2)\to\mathfrak{g}$ from Lemma \ref{the unique bilinear}. The corresponding algebraic supergroup, denoted by $\mathrm{BRJ}(2; 5)$, is SAS. 
	\end{tr}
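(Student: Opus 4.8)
The plan is to verify in turn the three axioms of a Harish-Chandra pair for $(\mathrm{Sp}_4, L(\omega_1+\omega_2))$, and then read off the SAS-property from Proposition \ref{SAPS in positive char}. Write $G=\mathrm{Sp}_4$, $\mathfrak{g}=\mathrm{Lie}(G)$ and $L=L(\omega_1+\omega_2)$. Axioms (1) and (2) — the existence of a symmetric $G$-equivariant bilinear map $L\times L\to\mathfrak{g}$ — are exactly the content of Lemma \ref{the unique bilinear} (via the identification $\mathfrak{g}\simeq L(2\omega_1)$ recalled just before it), which moreover says the space of such maps is one-dimensional and that $G$-equivariance is automatic from $\mathfrak{g}$-equivariance. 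So the only remaining point is axiom (3): $[[v,v],v]=0$ for all $v\in L$, where the outer bracket denotes the induced $\mathfrak{g}$-action on $L$.

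For axiom (3) I would import the (independently known) Lie superalgebra $\mathfrak{brj}(2;5)$ of \cite{BGL3}. Recall that $\mathfrak{brj}(2;5)_{\bar 0}\simeq\mathfrak{sp}_4=\mathfrak{g}$ and that $\mathfrak{brj}(2;5)_{\bar 1}$, as a module over its even part, is isomorphic to $L$ (it is the simple $12$-dimensional $\mathfrak{sp}_4$-module; since $\omega_1+\omega_2$ is $p$-restricted for $p=5$, this is the restriction of the $\mathrm{Sp}_4$-module $L$ considered above). The restriction of the super-bracket $\mathfrak{brj}(2;5)_{\bar 1}\times\mathfrak{brj}(2;5)_{\bar 1}\to\mathfrak{brj}(2;5)_{\bar 0}$ is symmetric, $\mathfrak{g}$-equivariant (hence $G$-equivariant by Lemma \ref{the unique bilinear}) and nonzero — otherwise $\mathfrak{brj}(2;5)_{\bar 1}$ would be a nonzero proper abelian ideal, contradicting simplicity. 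By the one-dimensionality in Lemma \ref{the unique bilinear}, this bracket is a nonzero scalar multiple of the map defining $\mathrm{BRJ}(2;5)$. Now apply the super-Jacobi identity to three copies of an odd $v$: it yields $3\,[v,[v,v]]=0$, so $[v,[v,v]]=0$ since $\mathrm{char}\,\Bbbk=5\neq 3$, and hence $[[v,v],v]=-[v,[v,v]]=0$ in $\mathfrak{brj}(2;5)$; rescaling, the same holds for the defining map of $\mathrm{BRJ}(2;5)$. Thus axiom (3) holds, $(\mathrm{Sp}_4,L)$ is a Harish-Chandra pair, and the equivalence of categories from Section 1 produces the algebraic supergroup $\mathrm{BRJ}(2;5)$. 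Since two brackets differing by a nonzero scalar on the odd part give isomorphic Lie superalgebras (rescale the odd part by a square root of the ratio, using that $\Bbbk$ is algebraically closed), $\mathrm{Lie}(\mathrm{BRJ}(2;5))\simeq\mathfrak{brj}(2;5)$.

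For the SAS statement I would invoke Proposition \ref{SAPS in positive char}. The group $G=\mathrm{Sp}_4$ is non-solvable, hence so is $\mathbb{G}=\mathrm{BRJ}(2;5)$, and $\mathrm{Sp}_4$ is almost-simple, hence a SAS-group (no characteristic-$5$ degeneracy occurs for type $\mathrm{C}_2$). It then suffices to check conditions (1) and (2) of Lemma \ref{a very particular case} for $\mathbb{G}$. Since $\mathfrak{G}_{\bar 1}=L$ is irreducible and nontrivial, the smallest $G$-submodule with trivial quotient is $L$ itself, giving (1); and since the only nonzero $G$-submodule of $L$ is $L$, on which the bracket is nonzero, there is no nonzero $\mathfrak{W}$ with $[\mathfrak{G}_{\bar 1},\mathfrak{W}]=0$, giving (2). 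Hence $\mathbb{G}$ is SAS. The main obstacle in this whole argument is axiom (3); I am sidestepping the direct cubic computation by using \cite{BGL3} together with the one-dimensionality of Lemma \ref{the unique bilinear} (which simultaneously gives the uniqueness of the Harish-Chandra pair structure asserted in the introduction). A self-contained route would instead require verifying the cubic identity from the explicit structure constants of the bilinear map, or equivalently showing $\mathrm{Hom}_G(\mathrm{Sym}_2(\mathrm{Sym}_2 L)\cap\text{stuff},\dots)=0$ — more precisely that $L$ is not a composition factor of $\mathrm{Sym}_3(L)$ — via a weight-space computation; that is the computational heart, unnecessary once \cite{BGL3} is cited.
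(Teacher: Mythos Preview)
Your proof is correct and follows essentially the same route as the paper: use the one-dimensionality of Lemma \ref{the unique bilinear} to identify the bracket with the one coming from $\mathfrak{brj}(2;5)$ in \cite{BGL3} (so that axiom (3) is inherited), and then invoke Proposition \ref{SAPS in positive char} together with the irreducibility and nontriviality of $L(\omega_1+\omega_2)$ to verify conditions (1)--(2) of Lemma \ref{a very particular case}. Your write-up is in fact more explicit than the paper's on why the $\mathfrak{brj}(2;5)$ bracket is nonzero and on how the super-Jacobi identity yields $[[v,v],v]=0$, but the underlying argument is identical.
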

	\begin{proof}
		By Lemma \ref{the unique bilinear} this bilinear map is unique (up to a scalar multiple), hence it induces the same Lie bracket as in \cite{BGL3}. 
		In particular, $(\mathrm{Sp}(4), L(\omega_1+\omega_2))$ satisfies all axioms of Harish-Chandra pairs with respect to this map, that proves the first statement.
		
		Since $G$ is a SAS-group and $\mathfrak{G}_1$ is an irreducible $G$-module, Proposition \ref{SAPS in positive char} concludes the proof. 
	\end{proof}
	
	\begin{rem}\label{Brown algebra} {\rm 
			In characteristic $p=3$, there is another simple Lie superalgebra $\mathfrak{brj}(2; 3)$, whose even part is isomorphic to so-called Brown algebra of dimension $10$, see \cite{BGL4, eld}. In contrast to $\mathfrak{brj}(2; 5)$,  the superalgebra 
			$\mathfrak{brj}(2; 3)$ is not algebraic. In fact, let $\mathbb{G}$ be an algebraic supergroup, such that
			$\mathfrak{G}\simeq \mathfrak{brj}(2; 3)$. Since 
			$\mathfrak{G}_0=\mathfrak{brj}(2; 3)_0$ is simple, $G$ is a SAS-group and the proof of Proposition \ref{Lie algebra of SAS} shows that $\mathfrak{G}_0$ is a simple Lie algebra of type $B_2$ or $C_2$, which is a contradiction.}
	\end{rem}

\end{document}